\documentclass[12pt]{amsart}
\usepackage{amscd,amssymb,amsthm,amsmath,amssymb,textcomp,multirow,rotating,longtable,mathrsfs,fancyhdr,textgreek,mathtools,wasysym,pdflscape,imakeidx,makecell, latexsym,amsthm,newlfont,enumerate}
\usepackage{tikz}

\usepackage{multicol}
\usepackage{tabulary}
\usepackage{colortbl}
\usepackage{tikz-cd}

\sloppy\pagestyle{plain}

\usepackage{geometry}
\geometry{margin=2cm}



\DeclareMathOperator{\Pic}{Pic}
\DeclareMathOperator{\Cl}{Cl}
\DeclareMathOperator{\Exc}{Exc}
\DeclareMathOperator{\rk}{rk}
\DeclareMathOperator{\Bs}{Bs}
\DeclareMathOperator{\Sing}{Sing}

\DeclareMathOperator{\NEb}{\overline{NE}}
\DeclareMathOperator{\Nklt}{Nklt}
\DeclareMathOperator{\Effb}{\overline{Eff}}
\DeclareMathOperator{\ord}{ord}

\newcommand{\DR}{\mathbb{R}} 
\newcommand{\DC}{\mathbb{C}}
\newcommand{\DZ}{\mathbb{Z}}
\newcommand{\DQ}{\mathbb{Q}}
\newcommand{\DP}{\mathbb{P}}

\newcommand{\DN}{\mathbb{N}}




\newcommand{\Aut}{\mathrm{Aut}}

\newcommand{\PGL}{\mathrm{PGL}}

\newcommand{\modspace}[2]{\mathrm{M}^\mathrm{Kps}_{#1, #2}}
\newcommand{\morimukai}[2]{$\mathrm{MM}_{#1 - #2}$} 


\makeatletter
\newcommand*{\da@rightarrow}{\mathchar"0\hexnumber@\symAMSa 4B }
\newcommand*{\da@leftarrow}{\mathchar"0\hexnumber@\symAMSa 4C }
\newcommand*{\xdashrightarrow}[2][]{%
  \mathrel{%
    \mathpalette{\da@xarrow{#1}{#2}{}\da@rightarrow{\,}{}}{}%
  }%
}
\newcommand{\xdashleftarrow}[2][]{%
  \mathrel{%
    \mathpalette{\da@xarrow{#1}{#2}\da@leftarrow{}{}{\,}}{}%
  }%
}
\newcommand*{\da@xarrow}[7]{%
  \sbox0{$\ifx#7\scriptstyle\scriptscriptstyle\else\scriptstyle\fi#5#1#6\m@th$}%
  \sbox2{$\ifx#7\scriptstyle\scriptscriptstyle\else\scriptstyle\fi#5#2#6\m@th$}%
  \sbox4{$#7\dabar@\m@th$}%
  \dimen@=\wd0 %
  \ifdim\wd2 >\dimen@
    \dimen@=\wd2 %
  \fi
  \count@=2 %
  \def\da@bars{\dabar@\dabar@}%
  \@whiledim\count@\wd4<\dimen@\do{%
    \advance\count@\@ne
    \expandafter\def\expandafter\da@bars\expandafter{%
      \da@bars
      \dabar@ 
    }%
  }%
  \mathrel{#3}%
  \mathrel{%
    \mathop{\da@bars}\limits
    \ifx\\#1\\%
    \else
      _{\copy0}%
    \fi
    \ifx\\#2\\%
    \else
      ^{\copy2}%
    \fi
  }%
  \mathrel{#4}%
}
\makeatother

\newtheorem{theorem}{Theorem}[section]

\newtheorem{claim}{Claim}[section]

\newtheorem{lemma}[theorem]{Lemma}
\newtheorem{corollary}[theorem]{Corollary}
\newtheorem*{corollary*}{Corollary}
\newtheorem*{maincorollary*}{Main Corollary}

\newtheorem*{conjecture*}{Conjecture}

\newtheorem*{problem*}{Problem}
\newtheorem*{calabiproblem*}{Calabi Problem}
\newtheorem*{Main Theorem*}{Main Theorem}

\newtheorem*{theorem*}{Theorem}
\newtheorem*{maintheorem*}{Main Theorem}
\newtheorem*{maintheorem1*}{Main Theorem (I)}
\newtheorem*{maintheorem2*}{Main Theorem (II)}
\newtheorem*{maintheorem3*}{Main Theorem (III)}

\theoremstyle{definition}

\newtheorem*{example*}{Example}

\theoremstyle{remark}
\newtheorem{remark}[theorem]{Remark}
\newtheorem*{remark*}{Remark}

\makeatletter\@addtoreset{equation}{subsection} \makeatother

\newcommand{\Gm}{\mathbb{G}_m}

\usepackage{graphicx}
\usepackage{enumerate}
\usepackage{hyperref}
\usepackage{listings}
\usepackage{mathtools}
\usepackage{maple}
\usepackage[utf8]{inputenc}
\usepackage{textcomp}
\usepackage[matrix,arrow,curve]{xy}

\makeindex

\title{On $K$-stability of one-nodal prime Fano threefolds of genus $12$}

 \author{Elena Denisova}
 \address{\emph{Elena Denisova}
\newline
\textnormal{University of Edinburgh,  Edinburgh, Scotland}
\newline
\textnormal{\texttt{e.denisova@sms.ed.ac.uk}}}

 \author{Anne-Sophie Kaloghiros}
\address{\emph{Anne-Sophie Kaloghiros}
\newline
\textnormal{Brunel University London, Middlesex, England}
\newline
\textnormal{\texttt{Anne-Sophie.Kaloghiros@brunel.ac.uk}}}

\begin{document}

\maketitle
\dedicatory{To Professor Yuri Prokhorov on the occasion of his 60th birthday}

\begin{abstract}
We show that general one-nodal prime Fano threefolds of genus $12$ are K-polystable.     
\end{abstract}

\section{Introduction}
\subsection{Singular Fano threefolds of genus $12$}

Let $X$ be a Fano threefold with terminal Gorenstein singularities. By \cite{Nam97,JR11}, $X\hookrightarrow\mathcal X$ has a smoothing and $\mathcal{X}_t$ for $t\neq 0$ is a smooth Fano threefold with Picard rank $\rho(X)$ and anticanonical degree $(-K_X)^3$. Unless mentioned otherwise, a prime Fano threefold of genus $12$ will refer to a terminal Gorenstein Fano threefold with Picard rank $1$ and anticanonical degree $22$. Recent advances in the theory of K-stability show that there is a projective moduli space $\modspace{3}{22}$ whose closed points over $\DC$ parameterize K-polystable Fano threefolds of anticanonical degree $22$ that admit a smoothing (see \cite{Xu-book} as a reference on the general theory of K-moduli).

Let $X$ be a prime Fano threefold of genus $12$, then $X$ is $\DQ$-factorial precisely when $X$ is smooth \cite{Muk02}. Smooth prime Fano threefolds of genus $12$ form a $6$-dimensional family, which contains both K-polystable and strictly K-semistable members \cite[Section 7.1]{Fano21}. A precise description of which smooth prime Fano threefolds of genus $12$ are K-polystable or semistable is still conjectural. Denote by $\overline M$ the (non-empty $6$-dimensional) component of $\modspace{3}{22}$ parametrizing those $K$-polystable Fano threefolds of anticanonical degree $22$ with a smoothing to a prime Fano threefold of genus $12$. 

Prokhorov classifies prime Fano threefolds of genus $12$ with one node and shows that they form four $5$-dimensional families \cite{Pro16}. The goal of this note is to show: 
\begin{theorem}
    A general one-nodal prime Fano threefold of genus $12$ is K-polystable. 

    There are four boundary divisors of $\overline M$ parametrising K-polystable degenerations of one-nodal prime Fano threefolds of genus $12$. 
\end{theorem}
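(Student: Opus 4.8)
The plan is to establish K-polystability through the valuative criterion, computing the stability threshold $\delta$ via the Abban--Zhuang method, and then to pass from K-semistability to K-polystability by controlling the automorphisms and the special degenerations of a general member of each family. Fix one of Prokhorov's four $5$-dimensional families and let $X$ be a general member, with its unique node $p_0$. Since $X$ is a terminal Gorenstein Fano threefold of degree $22$, K-stability is governed by $\beta(E) = A_X(E) - S_X(E)$ over prime divisors $E$ over $X$, where $S_X(E) = \frac{1}{22}\int_0^\infty \mathrm{vol}(-K_X - tE)\,dt$; recall that $\delta(X) > 1$ forces K-stability (hence K-polystability) and $\delta(X) \geq 1$ forces K-semistability.

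First I would prove K-semistability by showing $\delta(X) = \min_{p \in X}\delta_p(X) \geq 1$. The two features requiring attention are the node and the lines on $X$. At the node, let $\pi \colon \widetilde X \to X$ be the blow-up of $p_0$, with exceptional divisor $E_0 \cong \DP^1 \times \DP^1$ and discrepancy $1$, so that $A_X(E_0) = 2$; I would compute $S_X(E_0)$ from the intersection theory of $\pi^*(-K_X) - tE_0$ on $\widetilde X$ and check $S_X(E_0) < 2$, so that $E_0$ does not destabilize. At a smooth point $p$ I would run the Abban--Zhuang inequality along a flag $p \in C \subset S$ with $S \in |-K_X|$ a general anticanonical surface and $C$ an appropriate curve, reducing the estimate of $\delta_p(X)$ to a computation on $S$. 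The delicate cases are points lying on lines (curves $\ell$ with $-K_X \cdot \ell = 1$) and on the locus over $p_0$, which are the usual destabilizers in genus $12$; bounding $\delta_p$ uniformly from below along these special curves, using the explicit geometry of each of Prokhorov's four families, is the main obstacle of the whole argument. By lower semicontinuity of $\delta$ in the smoothing family, the resulting bound $\delta(X)\ge 1$ on the nodal fibre also controls the nearby smooth fibres, consistent with $X$ lying on the boundary.

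To upgrade K-semistability to K-polystability I would determine $\Aut(X)$ for the general member. If it is finite and the estimate yields the strict inequality $\delta(X) > 1$, then $X$ is K-stable and hence K-polystable. If instead a member carries a $\Gm$-action with the infimum $\delta(X)=1$ attained, I would show that any valuation computing it is induced by a one-parameter subgroup of $\Aut(X)$, so that the corresponding special test configuration is a product and $X$ is K-polystable by the equivariant criterion. In practice I expect to exhibit, in each family, a distinguished member $X_0$ with reductive $\Aut(X_0)$ for which the equivariant estimate is checked directly, and to use that $X$ already carries the worst singularity compatible with the family, so that no further special degeneration to a non-isomorphic $K$-polystable Fano occurs; equivalently, $X$ is the closed point of its $S$-equivalence class in the proper good moduli space $\overline M$.

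Finally, for the four boundary divisors: since a general member of each family is K-polystable, each family determines a morphism to $\overline M$. As the general member has finite automorphism group and the families carry no isotrivial directions, these morphisms are generically finite onto their images, which therefore have dimension $5$ and are of codimension $1$ in the $6$-dimensional $\overline M$; since their members are singular they lie in $\partial\overline M$, hence are boundary divisors. It remains to check that the four images are pairwise distinct, which I would do by separating the families through birational invariants of a general member or the geometry of the node and its small resolution, as recorded in Prokhorov's classification. The expected main difficulty throughout is the uniform $\delta$-estimate near the special curves in the semistability step; the moduli-theoretic step is then essentially a dimension count together with this separation argument.
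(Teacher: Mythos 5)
Your proposal has two strands, and the one you present as primary does not work, while the one you present as a fallback is the paper's actual method but with a misjustified final step.

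Your primary plan---proving $\delta(X)\geq 1$ for a \emph{general} member by estimating $\delta_p(X)$ at every point, including all points on lines and over the node---is precisely the problem that is out of reach: as the introduction notes, even for \emph{smooth} prime Fano threefolds of genus $12$ the K-(semi/poly)stability question is still conjectural, and you yourself flag this uniform estimate as ``the main obstacle'' without offering a way past it. The paper never attempts it. Instead, in each of Families (I)--(III) it constructs an \emph{explicit} member whose symmetry group ($G=\DZ/2\DZ\times\DZ/2\DZ$ acting on $\DP^3$, $\DZ/2\DZ\rtimes\DZ/3\DZ$ acting on $Q\subset\DP^4$, and $\DC^*\rtimes\DZ/2\DZ$ acting on $V_5$, respectively) acts \emph{without fixed points} on the base of the Sarkisov link, and then applies Zhuang's equivariant criterion (Theorem~\ref{equivGstab}): it suffices to check $\beta(\Xi)>0$ for $G$-\emph{invariant} divisors $\Xi$ only. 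The fixed-point-free action is what makes this finite: zero-dimensional centres must be the node, invariant curves form a short explicit list (constrained further by the Nklt lemmas for morphisms to $\DP^3$, $\DP^2$, $\DP^1$), and the remaining cases are handled by Abban--Zhuang flag computations on explicit surfaces. Your proposal mentions this equivariant route only as something you ``expect'' to do ``in practice,'' so the core mechanism that makes the theorem provable is absent from your argument.

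The second gap is the passage from the distinguished member to the general member of each family. For Families (I) and (II) the distinguished members have finite automorphism groups, so K-polystability upgrades to K-stability, and openness of K-stability yields the statement for general members---this part of your outline is fine. But for Family (III) the distinguished member has $\Aut(X)=\Gm\rtimes\mumu_2$, and K-polystability is \emph{not} an open condition; your justification that ``$X$ already carries the worst singularity compatible with the family, so that no further special degeneration to a non-isomorphic K-polystable Fano occurs'' is not a valid argument, since K-polystable degenerations of nearby members need not remain in the family, nor be one-nodal, nor even have isolated singularities (compare the Liu--Zhao degeneration mentioned after Family I, which has non-isolated canonical singularities). The paper closes this step by invoking \cite[Corollary 1.16]{Fano21} (noting it still holds for nodal Fano threefolds). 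Your concluding moduli argument---a dimension count showing each $5$-dimensional family maps generically finitely onto a divisor in the $6$-dimensional $\overline M$, plus pairwise distinctness of the four images---is consistent with what the paper asserts, but it rests entirely on the K-polystability of general members, which your proposal does not actually establish.
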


We now describe the geometry of the four families of one-nodal prime Fano threefolds of genus $12$ briefly. 
\begin{theorem}\label{fams}\cite{Pro16}
    Let $X$ be a prime Fano threefold of genus $12$, and assume that $\rk \Cl(X)= 2$ (or equivalently that $\Sing (X)$ consists of precisely one ordinary double point). Then $X$ is the midpoint of a Sarkisov link
\begin{center}
    \begin{tikzcd}
    & X_1 \arrow[rr, dashed, "\chi"] \arrow[dl,  "f_1"'] \arrow[dr,  "\pi_1"]  && X_2 \arrow[dl,  "\pi_2"'] \arrow[dr,  "f_2"]\\
    Z_1 && X && Z_2
    \end{tikzcd}
\end{center}
where $\pi_1$ and $\pi_2$ are small $\DQ$-factorializations, $\chi$ is a flop, and $f_1$ and $f_2$ are $K$-negative extremal contractions described as follows
\begin{enumerate}
    \item[(I)] $Z_1= \DP^3$ and $Z_2= \DP^3$, $f_1$ and $f_2$ are the blowups of curves $\Gamma_1\subset Z_1$ and $\Gamma_2\subset Z_2$ respectively. Both $\Gamma_1$ and $\Gamma_2$ are rational quintic curves that do not lie on quadric surfaces. 
    \item[(II)] $Z_1= Q\subset \DP^4$ and $Z_2= \DP^2$, $f_1$ is the blowup of a rational quintic curve $\Gamma_1 \subset Q$ that does not lie on a hyperplane section of $Q$, and $f_2$ is a conic bundle with discriminant $\delta$ of degree $3$.
    \item[(III)] $Z_1= V_5$ a quintic del Pezzo threefold, $Z_2= \DP^1$, $f_1$ is the blowup of a rational quartic curve $\Gamma \subset V_5$ and $f_2$ is a del Pezzo fibration of degree 6. 
    \item[(IV)] $Z_1= \DP^2$ and $Z_2= \DP^1$, $f_1 \colon \DP_{\DP^2}(\mathscr E)\to \DP^2$, where $\mathscr E$ is a stable rank 2 vector bundle and $f_2$ is a del Pezzo fibration of degree 5. 
\end{enumerate}

\end{theorem}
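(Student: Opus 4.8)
The plan is to exhibit $X$ as the midpoint of a two-ray game and then run a finite case analysis governed by the single numerical invariant $(-K_X)^3 = 22$. First I would pass to a small $\DQ$-factorialization: since $X$ is terminal Gorenstein of Picard rank $1$ with $\rk\Cl(X) = 2$, it is not $\DQ$-factorial, and its unique ordinary double point admits exactly two small resolutions $\pi_1\colon X_1\to X$ and $\pi_2\colon X_2\to X$, interchanged by the flop $\chi$. As each $\pi_i$ is small and crepant, $K_{X_i} = \pi_i^* K_X$, so every $X_i$ is a \emph{smooth} weak Fano threefold with $\rho(X_i) = \rk\Cl(X) = 2$ and $(-K_{X_i})^3 = (-K_X)^3 = 22$; the unique $(-K_{X_i})$-trivial curve is the flopping curve lying over the node.

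Next I would read off the geometry from the Mori cone. As $\rho(X_i) = 2$, the cone $\NEb(X_i)$ has exactly two extremal rays: the flopping ray $R_0$ contracted by $\pi_i$, on which $-K_{X_i}$ is trivial, and a second ray $R_1$ which is necessarily $K$-negative, since $-K_{X_i}$ is nef and strictly positive off $R_0$. The ray $R_1$ defines a Mori extremal contraction $f_i\colon X_i\to Z_i$ with $\rho(Z_i) = 1$, which exhibits $X$ as the midpoint of the stated Sarkisov link. By Mori's classification of extremal contractions of smooth threefolds, $f_i$ is either a divisorial contraction onto a Gorenstein terminal threefold $Z_i$ — which, being of Picard rank $1$ and the target of a $K$-negative contraction, is a genuine Fano threefold — or a conic bundle over a surface, or a del Pezzo fibration over a curve.

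The heart of the argument is then the numerics. For a divisorial $f_i$ that blows up a smooth curve $\Gamma\subset Z_i$ of genus $g$ and anticanonical degree $d = (-K_{Z_i})\cdot\Gamma$, one has
\[
 22 = (-K_{X_i})^3 = (-K_{Z_i})^3 - 2d + 2g - 2 .
\]
Combining this with the weak Fano condition — i.e.\ nefness of $f_i^*(-K_{Z_i}) - E_i$, which forbids curves that are too positive — with the list of admissible degrees $(-K_{Z_i})^3$ for Fano threefolds of Picard rank $1$, and with compatibility of the two contractions across the flop (an isomorphism in codimension $1$, so preserving $(-K)^3$ and the discrete data), forces $Z_i\in\{\DP^3, Q, V_5\}$ and pins down $(d,g)$: a rational quintic in $\DP^3$, a rational quintic in the quadric $Q$, or a rational quartic in $V_5$. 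For the fibration cases I would invoke the standard conic bundle formula expressing $(-K_{X_i})^3$ through $(-K_S)^2$ and $\deg\delta$ (yielding $S = \DP^2$ with $\deg\delta = 3$, or a $\DP^1$-bundle $\DP_{\DP^2}(\mathscr E)$), and the del Pezzo fibration formula relating $(-K_{X_i})^3$ to the fibre degree (yielding degrees $6$ and $5$). Matching which contraction can occur on each side of the flop then produces exactly the four combinations (I)–(IV).

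I expect the main obstacle to be twofold. First, ruling out every other a priori admissible contraction — a divisorial contraction with a point centre or a singular/higher-degree Fano target (for instance the numerically tempting cubic threefold or complete intersection of two quadrics, where $(-K_{Z_i})^3 \in\{24,32\}$), together with the unwanted del Pezzo and conic bundle degrees — requires showing each is incompatible with nefness of $-K_{X_i}$ or with the second contraction, which is delicate case-by-case bookkeeping. Second, and harder, one must establish the stated non-degeneracy of the centres (that the quintics avoid quadric, respectively hyperplane, sections and that $\mathscr E$ is stable) and, conversely, verify that each numerical configuration is realized by a genuine one-nodal $X$, i.e.\ that the two-ray game closes up symmetrically into a Sarkisov link with a single node at its midpoint.
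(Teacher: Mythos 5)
The paper itself does not prove this statement: Theorem~\ref{fams} is quoted from Prokhorov's classification \cite{Pro16}, so there is no internal proof to compare against. Your proposal is, in outline, a reconstruction of the two-ray game argument used there: pass to the small $\DQ$-factorializations, note that the two smooth weak Fano models $X_i$ with $\rho(X_i)=2$ and $(-K_{X_i})^3=22$ are exchanged by the Atiyah flop, contract the second ($K$-negative) extremal ray via Mori's classification, and run the numerics. That framework is sound, and your blowup formula $(-K_{X_i})^3=(-K_{Z_i})^3-2d+2g-2$ is correct (it checks out on all three divisorial cases: quintic in $\DP^3$, quintic in $Q$, quartic in $V_5$).

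However, as a proof the proposal has a genuine gap: everything that makes the theorem a classification is deferred rather than done. The numerics alone do not ``force'' $Z_i\in\{\DP^3,Q,V_5\}$ nor pin down $(d,g)$: for $\DP^3$ the equation gives $d-g=20$, so a sextic of genus $4$ is numerically admissible alongside the rational quintic; for $Q$ it gives $d-g=15$ (e.g.\ a sextic of genus $3$ survives); for $V_5$ it gives $d-g=8$ (e.g.\ a quintic of genus $2$ survives); and index-one Fano targets (e.g.\ $(-K_{Z_i})^3=22$ with $g=d+1$) as well as the singular targets of E3--E5 contractions are not excluded by the count either. Eliminating all of these requires the interplay with the contraction on the \emph{other} side of the flop, nefness of $-K_{X_i}$ with exactly one flopping curve, and geometric arguments about which $(\deg,g)$ pairs are realized---precisely the content of \cite{Pro16}. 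The same applies to the non-degeneracy assertions (the quintic lies on no quadric, resp.\ no hyperplane section of $Q$; stability of $\mathscr E$), which are needed for the link to have a one-nodal midpoint, and to the fact that exactly the four pairings (I)--(IV) of contractions occur. You correctly identify these as the main obstacles, but flagging them is not the same as overcoming them, so what you have is a plan for a proof rather than a proof.
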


\begin{remark}
    These four families appear in \cite[Table 2]{KP23} under references (12na),(12nb),(12nc) and (12nd). 
    The blowup of a general member $X$ of Family (I) (resp.~(II), resp.~(III), resp.~(IV)) at its node is a weak Fano threefold $\widehat X$ whose anticanonical model admits a smoothing in Family \morimukai{2}{12} (resp.~\morimukai{2}{13}, resp.~\morimukai{2}{14}, resp.~\morimukai{3}{5}) in the classification of Fano threefolds \cite{MoMu81}.
\end{remark}
\begin{theorem}\cite[Proposition 5.66]{Fano21}
    There is a K-stable member of Family (IV). 
\end{theorem}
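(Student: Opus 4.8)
The plan is to establish K-stability through the valuative criterion of Fujita and Li: a terminal Gorenstein Fano threefold $X$ is K-stable if and only if the stability threshold satisfies $\delta(X) > 1$, equivalently $\beta_X(E) = A_X(E)(-K_X)^3 - \int_0^{\infty} \mathrm{vol}(-K_X - tE)\,dt > 0$ for every prime divisor $E$ over $X$. Since a K-stable Fano has finite automorphism group, the member we seek cannot carry a nontrivial torus action; I would therefore either estimate $\delta$ directly for a suitable member, or produce the K-stable member as a general, symmetry-breaking small deformation of a distinguished K-polystable member with extra symmetry. In both approaches the essential input is the explicit Sarkisov link of Theorem \ref{fams}, which lets me replace the singular $X$ by a smooth birational model.

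Concretely, because $\pi_1$ and $\pi_2$ are small and crepant, divisorial valuations and anticanonical volumes over $X$ agree with those over the $\DQ$-factorializations $X_1$ and $X_2$; I would thus transfer the computation to whichever smooth model is more tractable. On $X_1 = \DP_{\DP^2}(\mathscr{E})$, a $\DP^1$-bundle over $\DP^2$, and on $X_2$, which carries the degree-$5$ del Pezzo fibration $f_2 \colon X_2 \to \DP^1$, I would apply the Abban--Zhuang method along a flag consisting of a fibre (or section) surface $S$, a curve $C \subset S$, and a point. This reduces the threefold estimate to refined $\delta$-invariants on $S$ and $C$. The quintic del Pezzo surface has $\delta = 15/13 > 1$, and, together with the positivity encoded in the Zariski--Nakayama decomposition of the restrictions of $-K_{X_2}$ to the flag members, this should yield $\delta > 1$ at every point whose centre avoids the exceptional locus of the link.

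The crux is the behaviour at and around the node. Here I would analyse the divisorial valuation extracting the ordinary double point together with those divisors whose centre passes through the singular point; the two small resolutions exchanged by the flop $\chi$ pin down the relevant flopping curves, and I would verify $\beta_X > 0$ for these valuations by a local computation of $A_X$ and of the Zariski decomposition of $-K$ on the blowup of the node. Alternatively, to pass from K-polystability to genuine K-stability I would exhibit a member $X_0$ with reductive $\Aut(X_0)^{\circ}$ (a $\Gm$- or $\mathrm{SL}_2$-action compatible with the link), prove it is K-polystable by the torus-equivariant form of the valuative criterion, and then invoke the local structure of the K-moduli space (a Luna-type slice) to conclude that the symmetry-breaking general members of Family (IV) have finite stabilizer and are therefore K-stable. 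The main obstacle in either route is controlling the contribution of the node: the ordinary double point produces special divisorial valuations whose volumes depend on the global geometry of the link, and one must rule out destabilization by the flopping curves --- in the deformation approach, the corresponding difficulty is to check that $\Aut(X_0)^{\circ}$ acts with generically finite stabilizer on the deformations transverse to $X_0$.
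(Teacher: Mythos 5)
First, a point of comparison that matters here: the paper does \emph{not} prove this statement. It is imported verbatim from \cite[Proposition 5.66]{Fano21}, where an explicit member of Family (IV) is written down and its K-stability is verified; the present paper only supplies analogous arguments for Families (I)--(III). So the benchmark for your attempt is the style of proof used there and in Sections 3--5 of this paper: construct a concrete member with a prescribed symmetry group $G$ and finite $\Aut$, classify the possible centres of $G$-invariant divisorial valuations, and carry out explicit Zariski-decomposition and Abban--Zhuang volume computations to verify $\beta>0$ for each of them.

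Judged against that, your proposal has a genuine gap: it is a research plan, not a proof, because every load-bearing step is deferred. You never exhibit a member of Family (IV) --- no choice of the stable bundle $\mathscr E$, of the quintic del Pezzo fibration, or of a group action --- and the two decisive estimates, namely $\delta>1$ away from the node and $\beta>0$ for valuations centred at the node and on the flopping curves, appear only as ``should yield'' and ``I would verify''. Those computations are the entire mathematical content of the statement, and there is no a priori reason they come out favourably: in this very paper the analogous computation for Family (III) does \emph{not} give K-stability, only K-polystability, precisely because the constructed member has $\Aut(X)=\Gm\rtimes\mumu_2$. This also exposes a structural flaw in your fallback route: a distinguished member $X_0$ with a nontrivial torus or $\mathrm{SL}_2$-action can never itself be the K-stable member you are asked to produce (K-stable Fanos have finite automorphism groups), and Zhuang's criterion (Theorem~\ref{equivGstab}) applied to such an $X_0$ yields at best K-polystability. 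Passing from $X_0$ to a K-stable nearby member then requires the Luna-slice analysis of the $\Aut(X_0)$-action on the versal deformation, which you name but do not perform; it cannot be short-circuited by deformation-openness, since only K-semistability and uniform K-stability are open conditions, not K-polystability. In short, the outline points at the correct toolbox --- equivariant valuative criterion, transfer of volumes across the small crepant maps $\pi_1,\pi_2$, Abban--Zhuang flags --- but none of the verifications that would turn it into a proof are carried out.
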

In this note, we prove:
\begin{theorem}
    There exist K-stable members of Families (I) and (II). There is a K-polystable member of Family (III).
\end{theorem}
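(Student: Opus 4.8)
The plan is to verify the valuative (Fujita–Li) criterion for K-stability by estimating the stability threshold $\delta(X)$, applying the Abban–Zhuang method to reduce the global estimate to local computations on a well-chosen flag. For Families (I) and (II) the goal is to exhibit a member with $\delta(X)>1$, which gives K-stability; for Family (III), where $\Aut(X)$ is positive-dimensional, one cannot hope for $\delta>1$, and instead I would run an equivariant version of the same argument, showing $\delta_G(X)\geq 1$ with equality realised only by $G$-invariant valuations, whence K-polystability.

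The geometric engine in all three cases is Theorem~\ref{fams}. Since $\rk\Cl(X)=2$, the Weil class group is spanned by $-K_X$ together with one extra (non-Cartier) class, and the Sarkisov link identifies two natural representatives $D_1=\pi_{1*}\Exc(f_1)$ and $D_2=\pi_{2*}\Exc(f_2)$. Blowing up the node $\sigma\colon\widehat X\to X$ produces the weak Fano threefold of the Remark, whose exceptional quadric $E_0\cong\DP^1\times\DP^1$ has its two rulings contracted by $\widehat X\to X_1$ and $\widehat X\to X_2$; thus $\widehat X$ is the common resolution of the flop $\chi$. Running the two-ray games on $X_1$ and $X_2$ supplied by the link describes the Mori chamber decomposition of $\Effb(\widehat X)$ completely, and this is precisely the data needed to compute the volume $\mathrm{vol}\bigl(\sigma^*(-K_X)-tE_0\bigr)$ and the restricted volumes along $D_1,D_2$, hence the $S$-invariants of these distinguished valuations.

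For the generic part of the estimate I would take $S\in|-K_X|$, a K3 surface that may be chosen through the node, where it acquires an ordinary double point. Here $A_X(S)=1$ and, since $S\equiv -K_X$,
\[
S_X(S)=\frac{1}{(-K_X)^3}\int_0^1 (1-t)^3(-K_X)^3\,dt=\tfrac14,
\]
so $A_X(S)/S_X(S)=4>1$ and $S$ itself is harmless. The Abban–Zhuang inequality then bounds $\delta(X)$ below by
\[
\min\left\{4,\ \inf_{F}\frac{A_{S,\Delta_S}(F)}{S(W^S_{\bullet,\bullet};F)}\right\},
\]
the infimum running over divisors $F$ over $S$, where $\Delta_S$ is the different and $W^S_{\bullet,\bullet}$ the refinement of $-K_X$ by $S$; this is in turn estimated by descending to further flags (a curve and a point on $S$). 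The crux, and the step I expect to dominate the work, is controlling the special valuations the link singles out: the divisorial valuation $E_0$ over the node, where one must show $S_X(E_0)<A_X(E_0)=2$, together with the Weil divisors $D_1,D_2$ arising from the rational quintics $\Gamma_i$ of (I), (II). For each, $S_X$ is read off from the Zariski decomposition of $\sigma^*(-K_X)-tE_0$ (respectively $-K_X-tD_i$) as $t$ crosses the walls of $\Effb(\widehat X)$, and verifying the resulting inequalities $\beta>0$ is where the nodal, non-$\DQ$-factorial geometry makes the bookkeeping delicate.

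For Family (III) the degree-$6$ del Pezzo fibration $f_2$ and the $V_5$-structure endow $X$ with a positive-dimensional automorphism group $G$ containing a torus. I would first pin down $G$ and then repeat the above estimates $G$-equivariantly, so that it suffices to test $G$-invariant divisors over $X$. The expected outcome is $\delta_G(X)=1$, with the minimum attained only along valuations induced by one-parameter subgroups of $G$; such valuations yield product test configurations and hence do not obstruct polystability, giving a K-polystable member. The principal obstacle here is twofold: determining $\Aut(X)$ precisely, and carrying out the equivariant flag computation in the presence of the torus action on the fibration.
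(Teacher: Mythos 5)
Your overall toolkit (valuative criterion, Abban--Zhuang flags, Zariski decompositions on $\widehat X$) matches the paper's, but your plan has a genuine gap for Families (I) and (II): you propose a \emph{non-equivariant} estimate $\delta(X)>1$ for some member, and nothing in your outline makes that computation tractable. The paper's essential idea, which is absent from your proposal, is to \emph{construct special members with large finite symmetry groups} --- $\Gamma_1=C_{(1,-4)}\subset\DP^3$ invariant under $\DZ/2\DZ\times\DZ/2\DZ$ for (I), and a quintic in $Q\subset\DP^4$ invariant under $\DZ/2\DZ\rtimes\DZ/3\DZ$ for (II) --- and then apply Zhuang's equivariant criterion (Theorem~\ref{equivGstab}): it suffices that $\beta(\Xi)>0$ for every $G$-\emph{invariant} prime divisor $\Xi$ over $X$. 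Since $G$ has no fixed points on $\DP^3$ (resp.\ on $Q$), the only possible $0$-dimensional centre is the node, the $G$-invariant lines form a single explicit pencil, and surface centres are excluded by a short effective-cone argument; the Abban--Zhuang integrals then only need to be run for this short list of centres. Without the group action you must control $A/S$ for divisors centred at \emph{every} point and curve of a nodal degree-$22$ threefold, and your ``generic part'' is no help there: the computation $S_X(S)=\tfrac14$ for a K3 section $S\in|-K_X|$ is correct but vacuous, since the real work in the Abban--Zhuang inequality is the infimum of $A/S(W^S_{\bullet,\bullet};\cdot)$ over curves and points of $S$, which you do not estimate and which there is no reason to expect to exceed $1$ for this choice of flag. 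Identifying $E_0$, $D_1$, $D_2$ as ``the crux'' also inverts the actual difficulty: divisorial centres are the easy case (cf.\ Lemma~\ref{I:dim2}); the hard cases are the low-dimensional centres, which only the symmetry makes finite in number.

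For Family (III) there are two further problems. First, the torus is not ``endowed'' by the del Pezzo fibration or the $V_5$-structure: a general member of Family (III) has finite automorphism group (the family is $5$-dimensional while $\Aut(V_5)=\PGL_2(\DC)$ has dimension $3$), so one must \emph{construct} a special member whose quartic curve $\Gamma_1\subset V_5$ is invariant under $G=\Gm\rtimes\mumu_2$, as the paper does. Second, your polystability criterion --- $\delta_G(X)=1$ with the minimum attained only by valuations induced by one-parameter subgroups, which ``yield product test configurations'' --- is a subtler statement than what the paper uses, and it misses the key trick: the involution $\tau$ inverts the torus, so the divisorial valuations coming from one-parameter subgroups $\lambda$ and $\lambda^{-1}$ are swapped by $\tau$ and hence are \emph{not} $G$-invariant. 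This is precisely why Theorem~\ref{equivGstab} can be applied with strict inequality $\beta(\Xi)>0$ for all $G$-invariant $\Xi$, avoiding any analysis of minimizing valuations or of product test configurations. As written, your route for (III) would require proving that every $G$-invariant valuation computing $\delta_G=1$ comes from a one-parameter subgroup, a statement you neither prove nor reduce to known results.
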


\textbf{Acknowledgements.}
We thank Ivan Cheltsov, Yuchen Liu, Antoine Pinardin, Junyan Zhao and the anonymous referee for valuable discussions and comments. 
Anne-Sophie Kaloghiros was supported by EPSRC grant EP/V056689/1, Elena Denisova is a PhD student at the University of Edinburgh supported by the
School of Mathematics Studentship (funded via EPSRC DTP).

\section{Preliminary results on explicit K-stability of Fano threefolds}
All varieties considered are defined over $\DC$.
Let $X$ be a Fano variety with at most Kawamata log terminal
singularities of dimension $n\geq 2$, and let $G$ be a reductive subgroup in $\Aut(X)$. 
Let $\Xi$ be a divisor over $X$, that is $\Xi$ is a prime divisor on a normal variety $\widetilde X$ with a birational morphism $\varphi\colon \widetilde X\to X$. Define $\beta(\Xi)=A_X(\Xi)-S_X(\Xi)$, where
where $A_X(\Xi)= 1+ \ord_{\Xi}(K_{\widetilde{X}/ X})$ is the~log discrepancy of $\Xi$ and 
\[ S_X(\Xi) = \frac{1}{(-K_X)^n}\int_0^{\tau (\Xi)} \mathrm{vol}\big(\varphi^*(-K_X)-u\Xi\big)du \]
for $\tau(\Xi)= \sup\{ u\in \DR_{> 0} \vert \varphi^*(-K_X)-u\Xi \mbox{ is big }\}$.

\begin{theorem}\cite[Corollary~4.14]{Zhuang}\label{equivGstab}
\label{theorem:G-K -polystability}
Suppose that $\beta(\Xi)>0$ for every $G$-invariant  prime divisor $\Xi$ over $X$.
Then $X$ is $K$-polystable.
\end{theorem}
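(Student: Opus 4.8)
\emph{The statement is Zhuang's equivariant K-polystability criterion; here is the strategy I would follow to prove it.} The plan is to argue by contradiction and to reduce everything to $G$-invariant divisorial valuations by way of special test configurations. Recall (Li, Fujita, and the Li--Xu special test configuration machinery) that K-polystability of $X$ can be detected on special test configurations: $X$ is K-polystable if and only if $\mathrm{DF}(\mathcal X,\mathcal L)\geq 0$ for every special test configuration, with equality only for product configurations. Moreover, for a special test configuration the Donaldson--Futaki invariant equals $\beta(\Xi)$, where $\Xi$ is the divisorial valuation on $X$ induced by the order of vanishing along the central fibre. Thus it suffices to show that, under the hypothesis $\beta(\Xi)>0$ on every $G$-invariant prime divisor over $X$, there is no special test configuration with $\mathrm{DF}\leq 0$ that is non-product when $\mathrm{DF}=0$.

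First I would establish K-semistability equivariantly, that is $\delta(X)\geq 1$. Using $\delta(X)=\lim_m \delta_m(X)$ together with the description of $\delta_m$ via log canonical thresholds of $m$-basis-type divisors, I would exploit the reductivity of $G$: since $H^0(X,-mK_X)$ decomposes as a $G$-representation, one can form a $G$-invariant $m$-basis-type divisor $D_m$, and because the minimal log canonical centre of a $G$-invariant pair is canonical and hence $G$-invariant, the relevant threshold is computed by a $G$-invariant divisorial valuation. This yields $\delta_G(X)=\delta(X)$, so the hypothesis on $G$-invariant divisors forces $\delta(X)\geq 1$, whence $X$ is K-semistable.

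The remaining and harder case is that $X$ is K-semistable but not K-polystable. Then there is a non-product special test configuration with $\mathrm{DF}=0$ degenerating $X$ to a K-polystable $X_0\not\cong X$ carrying a $\Gm$-action. The crucial step is to render this degeneration $G$-equivariant. Here I would invoke the uniqueness of the K-polystable degeneration (Li--Wang--Xu): $X_0$ is canonically attached to $X$, so $G$ acts on the nonempty set of optimal degenerations, and reductivity of $G$ supplies a $G$-fixed optimal degeneration through a fixed-point argument on the associated convex space of valuations. The induced divisorial valuation $\Xi$ is then $G$-invariant, non-trivial (because the degeneration is non-product), and satisfies $\beta(\Xi)=0$, contradicting the hypothesis.

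The main obstacle is exactly this equivariance step in the polystable case: upgrading the canonicity of the optimal destabilizer to the existence of a $G$-invariant one. This is where reductivity of $G$ is indispensable---the conclusion is false for non-reductive $G$---and it rests on the deep uniqueness theory for minimizers of the normalized volume (the optimal degeneration) combined with a building-type fixed-point theorem for the reductive group acting on the relevant space of valuations. By comparison, the K-semistability reduction is comparatively soft once the basis-type-divisor formula for $\delta_m$ and the canonicity (hence $G$-invariance) of minimal log canonical centres are in hand.
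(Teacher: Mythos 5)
The paper offers no proof of this statement: it is quoted verbatim from Zhuang (Corollary 4.14 of \emph{Optimal destabilizing centers and equivariant K-stability}) and used as a black box, so your proposal must be measured against Zhuang's argument rather than anything in this text. At that level your first reduction (special test configurations via Li--Xu, and $\mathrm{DF}$ of a special test configuration equalling $\beta$ of the valuation induced by the central fibre) is correct and standard, but both of your substantive steps have genuine gaps, and at exactly the two places where the theorem is hard. For the semistability step: (i) for nonabelian reductive $G$ there is in general \emph{no} $G$-invariant $m$-basis-type divisor --- a basis adapted to the isotypic decomposition of $H^0(X,-mK_X)$ does not produce a $G$-invariant divisor, since $G$ mixes basis vectors inside an irreducible summand; Zhuang works instead with $G$-invariant filtrations and linear series and their log canonical thresholds. (ii) The inference ``the minimal lc centre of a $G$-invariant pair is $G$-invariant, hence the threshold is computed by a $G$-invariant divisorial valuation'' is a non sequitur: invariance of the \emph{centre} does not make any individual computing lc \emph{place} invariant. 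Bridging that gap is precisely the content of Zhuang's uniqueness theorem for optimal destabilizing centres together with an explicit construction of an invariant valuation (the identity component of $G$ automatically fixes each exceptional divisor on an equivariant model, and the finite part is handled by passing to an invariant quasi-monomial combination over the orbit of an lc place on an equivariant dlt modification, then approximating by invariant divisorial valuations --- not by canonicity of the centre alone).

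The polystable step is where your plan would actually fail. Uniqueness of the K-polystable degeneration up to isomorphism (Blum--Xu; Li--Wang--Xu in the smoothable setting) does \emph{not} yield a $G$-equivariant test configuration: knowing that $X_0$ is canonically attached to $X$ gives an action of $G$ on an isomorphism class, not a $G$-fixed degeneration, and your proposed ``building-type fixed-point argument on the convex space of valuations'' is not an available tool --- no such fixed-point theorem for a reductive group acting on the relevant valuation space exists in the literature, and this is not how Zhuang proceeds. You also conflate two distinct theories: minimizers of the normalized volume govern local degenerations of klt \emph{singularities} to K-semistable Fano cones, not the optimal degeneration of a K-semistable Fano variety. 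Zhuang's actual route stays entirely inside his optimal-destabilizing-centre framework: from a special test configuration with $\mathrm{Fut}=0$ and $X_0\not\cong X$ he passes to a perturbed (twisted) stability problem in which $X$ becomes strictly destabilized, so that the destabilizing data is canonical and therefore $G$-invariant, and then extracts a $G$-invariant prime divisor $\Xi$ over $X$ with $\beta(\Xi)=0$, contradicting the hypothesis; reductivity enters through complete reducibility (invariant complements and compatible filtrations), not through a fixed-point theorem. As written, your sketch correctly identifies \emph{where} the difficulty lies but does not supply the mechanism that overcomes it, so it does not constitute a proof.
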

Recall the definition of the number $\alpha_{G,Z}(X)$, where $Z\subset X$ is a $G$-invariant subvariety:
$$
\alpha_{G,Z}(X)=\mathrm{sup}\left\{\lambda\in\mathbb{Q}\ \left|\ %
\aligned
&\text{the~pair}\ \left(X, \lambda D\right)\ \text{is log canonical at general point of $Z$ for any},\\
&\text{effective $G$-invariant $\mathbb{Q}$-divisor}\ D\ \text{on}\ X\ \text{such that}\ D\sim_{\mathbb{Q}} -K_X\\
\endaligned\right.\right\}.%
$$
Then $\alpha_G(X)\leqslant\alpha_{G,Z}(X)$.\index{$\alpha_{G,Z}(X)$}
\begin{lemma}\cite[1.44]{Fano21}\label{lemma144}
    Let $f\colon \widetilde X \to X$ be an arbitrary $G$-equivariant birational morphism, let $\Xi$ be a $G$-invariant prime divisor in $X$ such that $Z \subseteq f(\Xi)$, then we have
$$
\frac{A_X(\Xi)}{S_X(\Xi)}\geqslant\frac{n+1}{n}\alpha_{G,Z}(X).
$$
\end{lemma}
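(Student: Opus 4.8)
The plan is to deduce the inequality from two separate estimates: a bound on the pseudoeffective threshold coming from the $\alpha$-invariant, and a purely convex-geometric bound relating $S_X(\Xi)$ to that threshold. Write $V=(-K_X)^n$ and, pulling back along $f$, set $\tau=\sup\{t\in\DQ_{\geq 0}\mid f^*(-K_X)-t\Xi \text{ is pseudoeffective}\}$, so that $S_X(\Xi)=\frac{1}{V}\int_0^\tau \mathrm{vol}\bigl(f^*(-K_X)-t\Xi\bigr)\,dt$. I would reduce the statement to the two inequalities
\begin{equation*}
\tau\leq \frac{A_X(\Xi)}{\alpha_{G,Z}(X)}\qquad\text{and}\qquad S_X(\Xi)\leq \frac{n}{n+1}\,\tau,
\end{equation*}
since $\frac{A_X(\Xi)}{S_X(\Xi)}=\frac{A_X(\Xi)}{\tau}\cdot\frac{\tau}{S_X(\Xi)}\geq \alpha_{G,Z}(X)\cdot\frac{n+1}{n}$ then gives the claim.

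For the first inequality I would exploit that $\Xi$ is $G$-invariant. For any rational $t<\tau$ the class $f^*(-K_X)-t\Xi$ is big, hence $\DQ$-linearly equivalent to an effective divisor; averaging over the $G$-orbit (here $G$ is reductive) and pushing forward by $f$ produces a $G$-invariant effective $\DQ$-divisor $D\sim_{\DQ}-K_X$ with $\ord_\Xi(D)\geq t$. The key point is that $\ord_\Xi(g_*B)=\ord_{g^{-1}\Xi}(B)=\ord_\Xi(B)$ because $\Xi$ is $G$-invariant, so the averaging does not decrease $\ord_\Xi$. Since $Z\subseteq f(\Xi)$ lies in the center of $\Xi$, the definition of $\alpha_{G,Z}(X)$ gives that $(X,\lambda D)$ is log canonical at the general point of $Z$ for every $\lambda<\alpha_{G,Z}(X)$, which forces $A_X(\Xi)-\lambda\,\ord_\Xi(D)\geq 0$. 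Thus $t\leq \ord_\Xi(D)\leq A_X(\Xi)/\lambda$; letting $t\to\tau$ and $\lambda\to\alpha_{G,Z}(X)$ yields $\tau\leq A_X(\Xi)/\alpha_{G,Z}(X)$.

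The second inequality is where the factor $\tfrac{n+1}{n}$ is really generated, and I expect it to be the main obstacle. I would pass to a Newton--Okounkov body $\Delta\subset\DR^n_{\geq 0}$ of $-K_X$ with respect to an admissible flag whose first coordinate is $\ord_\Xi$, so that $\max_{\Delta}x_1=\tau$, the Euclidean volume of $\Delta$ equals $V/n!$, and $S_X(\Xi)$ is the first coordinate of the barycenter of $\Delta$. The crucial input is the ampleness of $-K_X$: since $|-mK_X|$ is base point free for $m\gg 0$, a general section has $\ord_\Xi=0$, so $\Delta$ meets the hyperplane $\{x_1=0\}$. A convex body with $\min_\Delta x_1=0$ and $\max_\Delta x_1=\tau$ has barycenter first coordinate at most $\tfrac{n}{n+1}\tau$, with equality exactly for a cone; this reverse-cone profile is realised by blowing up a smooth point of $\DP^n$, where $\mathrm{vol}(f^*(-K)-t\Xi)=(n+1)^n-t^n$ and one computes $S_X(\Xi)=n=\tfrac{n}{n+1}\tau$, so the bound is sharp. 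The delicate point is precisely this barycenter estimate: it is false without the hypothesis $\min_\Delta x_1=0$ (equivalently, it can fail when $-K_X$ is merely big and $\Xi$ sits in its stable base locus), so the argument must use ampleness and cannot rest on the concavity of $t\mapsto \mathrm{vol}(f^*(-K_X)-t\Xi)^{1/n}$ alone. Establishing this convex-geometric inequality carefully, via Brunn--Minkowski concavity of the slice volumes together with base point freeness, is the step I would expect to require the most care.
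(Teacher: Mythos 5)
The paper does not actually prove this lemma: it is imported verbatim from \cite[Lemma~1.44]{Fano21}, so there is no internal proof to compare against. Measured against the standard argument in the literature (the Fujita--Odaka scheme, which is also how the cited source proceeds), your reduction to the two inequalities $\tau\leq A_X(\Xi)/\alpha_{G,Z}(X)$ and $S_X(\Xi)\leq\frac{n}{n+1}\tau$ is exactly the canonical route, and both halves are correct in substance. Three points in your write-up need tightening. First, the pushforward step: your identity $\ord_\Xi(g_*B)=\ord_{g^{-1}\Xi}(B)$ concerns the group action, not the map $f$, and the naive claim $\ord_\Xi(f_*B)\geq\ord_\Xi(B)$ is false when $\Xi$ is $f$-exceptional (take $B=\Xi$). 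The clean fix: set $\widetilde D=B+t\Xi\sim_{\DQ}f^*(-K_X)$ and $D=f_*\widetilde D$; then $f^*D-\widetilde D$ is $f$-exceptional and numerically trivial, hence zero by the negativity lemma, so $\ord_\Xi(D)=\ord_\Xi(\widetilde D)\geq t$ on the nose. Second, literal orbit-averaging only makes sense for finite $G$, whereas in this paper $G$ contains $\Gm$ in Family III; instead, note that $H^0\big(\widetilde X, m f^*(-K_X)-\lceil mt\rceil\,\Xi\big)\subseteq H^0(X,-mK_X)$ is a $G$-stable subspace because $\Xi$ is $G$-invariant, take a semi-invariant eigenvector for the connected reductive part (its divisor is already $G^0$-invariant) and average only over the finite component group; invariance of $\Xi$ again guarantees $\ord_\Xi$ is not decreased. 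Your deduction $A_X(\Xi)\geq\lambda\,\ord_\Xi(D)$ is fine, since the generic point of $Z$ lies in $c_X(\Xi)=f(\Xi)$.

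Third, the barycenter estimate you flag as the main obstacle does close along the lines you indicate, and here is the missing one-line argument. Let $g(t)=\mathrm{vol}_{n-1}\big(\Delta\cap\{x_1=t\}\big)^{1/(n-1)}$; Brunn--Minkowski makes $g$ concave on the support of the profile, and base-point-freeness of $|-mK_X|$ for $m\gg 0$ gives a section with $\ord_\Xi=0$, so that support is all of $[0,\tau]$ with $g(0)\geq 0$ finite. A concave function with $g(0)\geq 0$ has $g(t)/t$ non-increasing, so writing $g(t)^{n-1}=t^{n-1}\psi(t)$ with $\psi$ non-increasing, Chebyshev's correlation inequality yields
\begin{equation*}
S_X(\Xi)=\frac{\int_0^\tau t^{n}\,\psi(t)\,dt}{\int_0^\tau t^{n-1}\,\psi(t)\,dt}\leq\frac{\int_0^\tau t^{n}\,dt}{\int_0^\tau t^{n-1}\,dt}=\frac{n}{n+1}\,\tau,
\end{equation*}
using the Lazarsfeld--Musta\c{t}\u{a} identity $\mathrm{vol}\big(f^*(-K_X)-t\Xi\big)=n!\,\mathrm{vol}_n\big(\Delta\cap\{x_1\geq t\}\big)$ to identify $S_X(\Xi)$ with the first barycenter coordinate. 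You are also right that this inequality genuinely needs $\min_\Delta x_1=0$ (it fails for big classes with $\Xi$ in the stable base locus), and your $\mathrm{Bl}_{\mathrm{pt}}\DP^n$ computation correctly exhibits sharpness. With these repairs your proof is complete and coincides with the standard one.
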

In particular, in dimension $3$, the existence of a $G$-invariant divisor $\Xi$ over $X$ with $\beta(\Xi)<0$ and $Z\subset c_X(\Xi)$ implies that $\alpha_{G,Z}(X)< \frac{3}{4}$, so that $Z$ is contained in $\Nklt(X, B_X)$ for some $B_X\sim_{\DQ} -\lambda K_X$ and rational number $\lambda < \frac{3}{4}$.

The next theorem is an application of the general inductive argument developed by Abban and Zhuang to bound the ratio $\frac{A_X(\Xi)}{S_X(\Xi)}$ \cite{AZ22} to the case of smooth Fano threefolds. 
\begin{theorem}\cite[Corollary 1.110]{Fano21}
\label{corollary:Kento-formula-Fano-threefold-surface-curve}
Let $X$ be a smooth Fano threefold, let $Y$ be an irreducible normal surface in the~threefold $X$,
let $Z$ be an irreducible curve in $Y$, and $\Xi$ a~prime divisor over $X$ with $C_X(\Xi)=Z$. Then
\begin{equation*}
\label{equation:Kento-formula-Fano-threefold-surface-curve}
\frac{A_X(\Xi)}{S_X(\Xi)}\geqslant\min\Bigg\{\frac{1}{S_X(Y)},\frac{1}{S\big(W^Y_{\bullet,\bullet};Z\big)}\Bigg\}
\end{equation*}
and
\begin{multline*}
    S\big(W^Y_{\bullet,\bullet};Z\big)=\frac{3}{(-K_X)^3}\int_0^\tau\big(P(u)^{2}\cdot Y\big)\cdot\ord_{Z}\Big(N(u)\big\vert_{Y}\Big)du+\\+\frac{3}{(-K_X)^3}\int_0^\tau\int_0^\infty \mathrm{vol}\big(P(u)\big\vert_{Y}-vZ\big)dvdu 
\end{multline*}

where $P(u)$ is the~positive part of the~Zariski decomposition of the~divisor $-K_X-uY$, and $N(u)$ is its negative part.
\end{theorem}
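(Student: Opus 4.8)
The plan is to obtain both the inequality and the explicit formula from the Abban--Zhuang theory of refined (multigraded) linear series attached to the flag $Z\subset Y\subset X$, combined with the Zariski decomposition of $-K_X-uY$ on the threefold. First I would refine the complete anticanonical system along $Y$: filtering $\bigoplus_{m\geq 0}H^0(X,-mK_X)$ by the order of vanishing along the prime divisor $Y$ and restricting to $Y$ produces the multigraded linear series $W^Y_{\bullet,\bullet}$, whose graded piece $W^Y_{m,j}$ is the image of $H^0(X,-mK_X-jY)$ in $H^0\big(Y,(-mK_X-jY)\vert_Y\big)$. The Abban--Zhuang estimate then bounds, for every prime divisor $\Xi$ over $X$ whose center is contained in $Y$,
\[
\frac{A_X(\Xi)}{S_X(\Xi)}\geq\min\left\{\frac{A_X(Y)}{S_X(Y)},\ \inf_{F}\frac{A_Y(F)}{S\big(W^Y_{\bullet,\bullet};F\big)}\right\},
\]
the infimum running over prime divisors $F$ over $Y$ whose center is contained in $C_X(\Xi)=Z$. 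Since $X$ is smooth and $Y$ is a prime divisor, $A_X(Y)=1$, producing the first term $1/S_X(Y)$.

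Next I would iterate the refinement along $Z\subset Y$. As $Y$ is normal it is regular at the generic point of the curve $Z$, so $Z$ is a prime divisor on $Y$ with $A_Y(Z)=1$; applying the surface Abban--Zhuang estimate to the flag $Z\subset Y$ controls the infimum above by $A_Y(Z)/S\big(W^Y_{\bullet,\bullet};Z\big)=1/S\big(W^Y_{\bullet,\bullet};Z\big)$. Combining the two refinement steps gives
\[
\frac{A_X(\Xi)}{S_X(\Xi)}\geq\min\left\{\frac{1}{S_X(Y)},\ \frac{1}{S\big(W^Y_{\bullet,\bullet};Z\big)}\right\},
\]
which is the first assertion.

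It remains to evaluate $S\big(W^Y_{\bullet,\bullet};Z\big)$, and here I would substitute the Zariski decomposition $-K_X-uY=P(u)+N(u)$ for $u\in[0,\tau]$, with $\tau=\sup\{u : -K_X-uY\ \text{is pseudoeffective}\}$. Because $P(u)$ is nef, its restriction $P(u)\vert_Y$ is nef on $Y$, so $\mathrm{vol}_Y\big(P(u)\vert_Y\big)=\big(P(u)^2\cdot Y\big)$ computes the mass of the slice of $W^Y_{\bullet,\bullet}$ at parameter $u$, while the negative part restricts to a fixed effective divisor $N(u)\vert_Y$ forcing every section in that slice to vanish along $Z$ to order at least $\ord_Z\big(N(u)\vert_Y\big)$. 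Separating this unavoidable vanishing, weighted by the slice mass, from the genuine vanishing of the movable part $P(u)\vert_Y$ along $Z$ splits the defining integral of the refined invariant into the two displayed terms $\big(P(u)^2\cdot Y\big)\,\ord_Z\big(N(u)\vert_Y\big)$ and $\int_0^\infty\mathrm{vol}\big(P(u)\vert_Y-vZ\big)\,dv$; the normalization $3/(-K_X)^3$ is then fixed by the definition of the refined $S$-invariant, equivalently by $\mathrm{vol}(-K_X)=(-K_X)^3$ together with the combinatorics of a length-two flag on a threefold.

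The main obstacle I anticipate is the asymptotic justification that $S\big(W^Y_{\bullet,\bullet};Z\big)$ is genuinely computed by this Zariski-decomposition expression: for $-K_X-uY$ merely big one must show that the restricted volume on $Y$ is governed by $P(u)\vert_Y$ with fixed part $N(u)\vert_Y$, and that the normalized sums over the double filtration defining the refined invariant converge to the stated iterated integrals. The inputs here are the continuity and piecewise-polynomial (chamber) structure of $u\mapsto\big(P(u),N(u)\big)$ and the standard comparison of multigraded linear series with their Okounkov bodies. A secondary technical point is to ensure that the intersection numbers $\big(P(u)^2\cdot Y\big)$ and the volumes $\mathrm{vol}_Y(\cdot)$ are read off on the possibly singular normal surface $Y$ itself rather than on a resolution, and that $A_Y(Z)=1$ is legitimately invoked at the generic point of $Z$.
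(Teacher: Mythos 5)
The paper offers no proof of this statement: it is quoted directly from \cite[Corollary~1.110]{Fano21}, and the proof given there is exactly the route you describe --- refine the anticanonical ring along $Y$, apply the Abban--Zhuang estimate, and evaluate $S\big(W^Y_{\bullet,\bullet};Z\big)$ via the Zariski decomposition of $-K_X-uY$ (this last step is \cite[Theorem~1.106]{Fano21}, cited in the remark following the statement). So your overall architecture agrees with the source.

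There is, however, a genuine error in your second step. You state the Abban--Zhuang inequality with the infimum running over prime divisors $F$ over $Y$ whose center is \emph{contained in} $C_X(\Xi)=Z$, and then assert that this infimum is controlled by $A_Y(Z)/S\big(W^Y_{\bullet,\bullet};Z\big)$. With that containment direction the assertion fails: the infimum would then include divisors over $Y$ centered at closed points of $Z$, and those are not bounded by $1/S\big(W^Y_{\bullet,\bullet};Z\big)$; handling them requires the full three-term version of the estimate, with a second refinement $W^{Y,Z}_{\bullet,\bullet,\bullet}$ and a term attached to the point, and invoking ``the surface Abban--Zhuang estimate'' does not collapse that extra term into $1/S\big(W^Y_{\bullet,\bullet};Z\big)$. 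The correct statement has the opposite containment: the infimum runs over prime divisors $F$ over $Y$ with $Z\subseteq C_Y(F)$. This is what makes the two-term minimum legitimate, and it is precisely where the hypothesis that $C_X(\Xi)$ equals the \emph{curve} $Z$ enters (your sketch never uses it): since $Y$ is normal, hence regular at the generic point of $Z$, the local ring $\mathcal{O}_{Y,\eta_Z}$ is a DVR, so the only divisorial valuation on $Y$ whose center contains the curve $Z$ is, up to rescaling, $\ord_Z$ itself, and the infimum is the single term $A_Y(Z)/S\big(W^Y_{\bullet,\bullet};Z\big)=1/S\big(W^Y_{\bullet,\bullet};Z\big)$. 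With this correction, the remainder of your sketch --- splitting $S\big(W^Y_{\bullet,\bullet};Z\big)$ into the term $\big(P(u)^2\cdot Y\big)\ord_Z\big(N(u)\vert_Y\big)$ forced by the negative part and the term $\int_0^\infty\mathrm{vol}\big(P(u)\vert_Y-vZ\big)dv$ from the movable part, with normalization $3/(-K_X)^3$ --- is consistent with \cite[Theorem~1.106]{Fano21}, and the technical caveats you flag (the chamber structure of $u\mapsto\big(P(u),N(u)\big)$ and reading volumes on the normal surface $Y$ itself) are the right ones.
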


\begin{remark}
Here, $W^Y_{\bullet,\bullet}$ is a $\DN^2$ linear series defined as the refinement of the anticanonical ring \[V^X_{\bullet}= \bigoplus_{m\in \DN} H^0(X, -mK_X)\] by the divisor $Y$. We refer to \cite[Section 2]{AZ22} or \cite[Section 1.7]{Fano21} for the definition of $W^Y_{\bullet,\bullet}$ and of the associated invariant $S\big(W^Y_{\bullet,\bullet};Z\big)$. We take the expression in Theorem~\ref{corollary:Kento-formula-Fano-threefold-surface-curve} as a definition of $S\big(W^Y_{\bullet,\bullet};Z\big)$. Note that an expression for $S\big(W^Y_{\bullet,\bullet};Z\big)$ can be computed in the more general context of $\DQ$-factorial Mori Dream spaces \cite[Theorem 1.106]{Fano21}.
\end{remark}
We recall a few results on nonklt centres of pairs $(X,B_X)$ where $X\sim -\lambda K_X $ for $\lambda \in \DQ$ when $X$ admits morphisms to projective spaces. 
\begin{lemma}[{\cite[Corollary A.10]{Fano21}}]\label{CorollaryA13}
Suppose $X=\mathbb{P}^3$ and $B_X\sim_{\mathbb{Q}} -\lambda K_X$
for some rational number $\lambda<\frac{3}{4}$.
Let $Z$ be the~union of one-dimensional components of  $\Nklt(X,B_X)$.
Then $\mathcal{O}_{\mathbb{P}^3}(1)\cdot Z\leqslant 1$. In particular, if $Z\ne 0$, then $Z$ is a line.
\end{lemma}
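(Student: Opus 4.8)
The plan is to study the non-klt locus globally, through the multiplier ideal $\mathcal J=\mathcal J(\DP^3,B_X)$, whose co-support is exactly $\Nklt(\DP^3,B_X)$, and to show that $\mathcal J$ is generated by linear forms; its zero locus is then a linear subspace of $\DP^3$, so its one-dimensional part is at most a single line. A purely local multiplicity count along the components of $Z$ does not by itself bound their degree, so the key is a cohomological (Castelnuovo--Mumford regularity) argument. Throughout, note that $-K_{\DP^3}=\MO_{\DP^3}(4)$, so the hypothesis $\lambda<\tfrac34$ says precisely that $\deg B_X=4\lambda<3$.

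First I would reduce to the case in which $\Nklt(\DP^3,B_X)$ has no two-dimensional components. Writing $B_X=\sum_i a_iD_i$, I peel off the finitely many prime components $D$ with $a_D\ge 1$ and pass to the residual divisor $B'=\sum_{a_i<1}a_iD_i$: it is effective, still satisfies $\deg B'<3$, and has all coefficients $<1$, so $\mathcal J(\DP^3,B')$ has no divisorial co-support and hence co-support of dimension $\le 1$. Any one-dimensional component of $\Nklt(\DP^3,B_X)$ that meets $\bigcup_{a_D\ge1}D$ lies inside that two-dimensional locus, so is not a genuine one-dimensional component; away from $\bigcup_{a_D\ge1}D$ the pairs $(\DP^3,B_X)$ and $(\DP^3,B')$ agree near the generic point of any curve. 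Hence $Z$ is contained in the one-dimensional part of $\Nklt(\DP^3,B')$, and it suffices to treat $\mathcal J=\mathcal J(\DP^3,B')$.

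Next I would run Nadel vanishing: since $\MO(k)-B'\equiv\MO(k-4\lambda)$ is ample for $k>4\lambda-4$, and $4\lambda-4<-1$, we obtain $H^i(\DP^3,\mathcal J\otimes\MO(k))=0$ for all $i>0$ and all $k\ge -1$. Because the co-support of $\mathcal J$ has dimension $\le 1$, the quotient $\MO/\mathcal J$ has vanishing higher cohomology in degrees $\ge 2$, giving $H^3(\DP^3,\mathcal J\otimes\MO(-2))\cong H^3(\DP^3,\MO(-2))=0$. Checking the three relevant groups, $H^1(\mathcal J)=H^2(\mathcal J\otimes\MO(-1))=H^3(\mathcal J\otimes\MO(-2))=0$, so $\mathcal J$ is $1$-regular in the sense of Castelnuovo--Mumford; the vanishing $H^2(\mathcal J\otimes\MO(-1))=0$ is exactly where $\deg B_X<3$ enters.

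Finally, $1$-regularity gives that $\mathcal J\otimes\MO(1)$ is globally generated, i.e.\ $\mathcal J$ is generated by linear forms, so $\Nklt(\DP^3,B')$ is cut out set-theoretically by hyperplanes and is a linear subspace of $\DP^3$. Its one-dimensional part is then a single line, whence $\MO_{\DP^3}(1)\cdot Z\le 1$, with $Z$ a line when nonzero. I expect the main obstacle to be the two-dimensional case feeding the $1$-regularity: the vanishing of $H^3(\mathcal J\otimes\MO(-2))$ fails from Nadel alone once $\deg B_X\ge 2$, and is recovered only after the reduction to co-support of dimension $\le 1$; checking that peeling off the coefficient-$\ge1$ components leaves the genuine one-dimensional non-klt locus unchanged is the step that requires the most care.
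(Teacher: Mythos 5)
This lemma is not proved in the paper at all: it is quoted from \cite[Corollary~A.10]{Fano21}, and the argument there is of a different flavour, based on Nadel vanishing together with the connectedness principle and restriction to general linear sections (each point of $Z\cap H$, for $H$ a general plane, is an isolated non-klt point of the pair $(H,B_X|_H)$ with $\deg B_X|_H<3$, and connectedness of $\Nklt(H,B_X|_H)$ forbids there being two of them). Your proposal is a genuinely different, and correct, route: after discarding the components of $B_X$ with coefficient $\geq 1$, Nadel vanishing gives $H^1(\mathcal J)=H^2(\mathcal J\otimes\MO(-1))=0$, the fact that the co-support of $\mathcal J$ has dimension $\leq 1$ gives $H^3(\mathcal J\otimes\MO(-2))=0$, and Mumford's regularity theorem then shows $\mathcal J\otimes\MO(1)$ is globally generated, so $\mathcal J$ is generated by linear forms and its zero locus is a linear subspace of dimension $\leq 1$. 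This is self-contained and in fact proves slightly more than the statement: the multiplier ideal of the reduced boundary is the full ideal sheaf of a point or a line. Your closing observation --- that Nadel vanishing alone cannot give the $H^3$ vanishing once $\deg B_X\geq 2$, and that the reduction to coefficients $<1$ is what rescues it --- identifies exactly the place where the hypothesis $\lambda<\tfrac34$ and the dimension reduction interact.

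Two slips, neither fatal. First, the sentence ``any one-dimensional component of $\Nklt(\DP^3,B_X)$ that meets $\bigcup_{a_D\ge 1}D$ lies inside that two-dimensional locus'' is false as stated: a curve component may cross a surface component without being contained in it, and it is then still a genuine one-dimensional component of the non-klt locus. What your argument actually needs (and what your next clause supplies) is only that a genuine one-dimensional component $C$ is not \emph{contained} in $\bigcup_{a_D\ge 1}D$ (otherwise it would fail to be maximal), so its generic point lies off this locus, where $B_X$ and $B'$ agree as germs; closedness of $\Nklt(\DP^3,B')$ then gives $C\subset\Nklt(\DP^3,B')$, which has dimension $\leq 1$, so $C$ is one of its components. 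Second, the Nadel hypothesis should read ``$\MO(k)-(K_{\DP^3}+B')$ is ample'', not ``$\MO(k)-B'$ is ample''; the threshold you use, $k>4\lambda-4$, is the one belonging to the correct statement, so this is only a typo and the claimed vanishings $H^i(\DP^3,\mathcal J\otimes\MO(k))=0$ for $i>0$, $k\geq -1$ are right.
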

\begin{lemma}[{\cite[Corollary A.12]{Fano21}}]\label{CorollaryA15}
Suppose that $X$ is a~smooth Fano threefold, $B_X\sim_{\mathbb{Q}} -\lambda K_X$ for some rational number~\mbox{$\lambda<1$},
and there exists a~surjective morphism with connected fibers \mbox{$\phi\colon X\to \mathbb{P}^1$}.
Set $H=\phi^*(\mathcal{O}_{\mathbb{P}^1}(1))$. Let $Z$ be the~union of one-dimensional components of $\mathrm{Nklt}(X,\lambda B_X)$.
Then $H\cdot Z\leqslant 1$.
\end{lemma}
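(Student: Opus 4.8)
The plan is to restrict the pair to a general fibre of $\phi$ and run a connectedness argument on that fibre. Let $F$ be a general member of the base-point-free pencil $|H|=|\phi^*(\MO_{\DP^1}(1))|$, so $F$ is a general fibre of $\phi$. Since $X$ is smooth and $|H|$ is free, $F$ is a smooth surface, and as $\phi|_F$ is constant we have $\MO_X(F)\vert_F\cong\MO_F$; adjunction then gives $-K_F\cong(-K_X)\vert_F$, which is ample, so $F$ is a smooth del Pezzo surface. Writing $\Delta:=\lambda B_X$ and $\Delta_F:=\Delta\vert_F$, we get $\Delta_F\sim_{\DQ}-\lambda^2 K_F$, hence $-(K_F+\Delta_F)\sim_{\DQ}(1-\lambda^2)(-K_F)$ is ample because $\lambda<1$. (The same computation goes through verbatim if the intended pair is $(X,B_X)$, with $\lambda^2$ replaced by $\lambda$.)

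Next I would compare the nonklt loci. Because $X$ is smooth and $F$ is a general member of the free system $|H|$, the restriction theorem for multiplier ideals yields $\mathcal{J}(F,\Delta_F)=\mathcal{J}(X,\Delta)\cdot\MO_F$, and therefore $\Nklt(F,\Delta_F)=\Nklt(X,\Delta)\cap F$ as subschemes of $F$. Decompose $Z=Z_{\mathrm{hor}}+Z_{\mathrm{vert}}$ into the components dominating $\DP^1$ and those contracted by $\phi$; since a vertical curve lies in a fibre, $H\cdot Z_{\mathrm{vert}}=0$, and it suffices to bound $H\cdot Z_{\mathrm{hor}}$. For general $F$ (using generic smoothness over $\DC$), the scheme $Z_{\mathrm{hor}}\cap F$ is reduced of length exactly $H\cdot Z_{\mathrm{hor}}$, its points are general points of $Z_{\mathrm{hor}}$, and all of them lie in $\Nklt(F,\Delta_F)$.

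Finally I would apply the Kollár--Shokurov connectedness theorem to $(F,\Delta_F)$: since $-(K_F+\Delta_F)$ is ample, $\Nklt(F,\Delta_F)$ is connected. If $\Nklt(X,\Delta)$ has no divisorial component dominating $\DP^1$, then $\Nklt(F,\Delta_F)$ is a finite reduced set of points, which being connected has at most one point, so $H\cdot Z_{\mathrm{hor}}\le 1$. If instead some divisorial component $S$ of $\Nklt(X,\Delta)$ dominates $\DP^1$, then $S\cap F$ is a curve; as $Z_{\mathrm{hor}}\not\subseteq S$, a general point of $Z_{\mathrm{hor}}$ avoids $S$, so the points of $Z_{\mathrm{hor}}\cap F$ would be isolated points of $\Nklt(F,\Delta_F)$ disjoint from the curve $S\cap F$, contradicting connectedness unless $Z_{\mathrm{hor}}\cap F=\varnothing$, i.e. $H\cdot Z_{\mathrm{hor}}=0$. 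In every case $H\cdot Z=H\cdot Z_{\mathrm{hor}}\le 1$.

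The step I expect to be the main obstacle is controlling the interaction between the one-dimensional components $Z$ and the possible divisorial components of $\Nklt(X,\Delta)$: one must verify that on a general fibre the trace of $Z$ stays off the trace of every divisorial centre, so that connectedness of $\Nklt(F,\Delta_F)$ genuinely limits the number of points cut out by $Z$. The companion point requiring care is the identity $\Nklt(F,\Delta_F)=\Nklt(X,\Delta)\cap F$, which depends on the restriction theorem for multiplier ideals and on choosing $F$ general enough that its intersections with $Z$ and with the finitely many other centres are transverse and reduced.
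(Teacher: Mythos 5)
The paper does not actually prove this lemma: it is quoted from \cite[Corollary A.12]{Fano21} (with a small transcription inconsistency — in the original the hypothesis is $D\sim_{\DQ}-K_X$ and the pair is $(X,\lambda D)$, whereas here $B_X\sim_{\DQ}-\lambda K_X$ is combined with $\Nklt(X,\lambda B_X)$), so there is no internal argument to compare yours against; I can only assess your proof on its own terms, and it is correct. The three ingredients are valid and correctly deployed: (i) a general fibre $F$ of $\phi$ is a smooth del Pezzo surface with $-(K_F+\Delta_F)$ ample under either reading of the hypothesis, since $\lambda<1$; (ii) the restriction theorem for multiplier ideals gives the inclusion $\Nklt(F,\Delta_F)\supseteq\Nklt(X,\Delta)\cap F$ for every smooth $F\not\subseteq\Supp\Delta$, which is what places the points of $Z\cap F$ in the nonklt locus of the fibre, while the reverse inclusion — needed so that those points are \emph{isolated} in $\Nklt(F,\Delta_F)$ — is exactly the generic restriction theorem for a general member of the free pencil $|H|$; and (iii) Koll\'ar--Shokurov connectedness applies to $(F,\Delta_F)$ because $-(K_F+\Delta_F)$ is ample. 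Your case analysis is also watertight: the one-dimensional components of $\Nklt(X,\Delta)$ are irreducible components, hence contained in no divisorial component $S$, so for general $F$ the finitely many points of $Z_{\mathrm{hor}}\cap F$ avoid the curves $S\cap F$; connectedness then forces at most one such point when no divisorial component dominates $\DP^1$, and none at all when one does, giving $H\cdot Z\leq 1$ in all cases.

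For comparison, the argument in the cited reference is cohomological rather than fibrewise: Nadel vanishing for $H-(K_X+\lambda D)$, which is ample, gives surjectivity of the restriction map from $H^0(X,\MO_X(H))$ — a space of dimension only $2$ — onto sections supported on the nonklt locus, and this bounds the number and horizontal degree of the curve components of $Z$. Your route replaces the vanishing theorem by connectedness on a general fibre, at the price of invoking the generic restriction theorem; both are standard, and yours has the merit of making the role of the fibration completely geometric.
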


\begin{lemma}[{\cite[Corollary A.13]{Fano21}}]\label{CorollaryA16}
\label{corollary:Nadel-bound-for-CB}
Suppose that $-K_{X}$ is nef and big, $B_X\sim_{\mathbb{Q}} -\lambda K_X$
for some rational number~\mbox{$\lambda<1$}, and there exists a~surjective morphism with connected fibers \mbox{$\phi\colon X\to \mathbb{P}^2$}.
Set $H=\phi^*(\mathcal{O}_{\mathbb{P}^2}(1))$. Let $Z$ be the~union of one-dimensional components of  $\mathrm{Nklt}(X,\lambda B_X)$.
Then $H\cdot Z\leqslant 2$.
\end{lemma}

\section{Family I}\label{FamilyI}
Let $X$ be a one-nodal prime Fano threefold of genus $12$ that belongs to Family I of Theorem~\ref{fams}, then $X$ is the midpoint of a Sarkisov link associated to a Cremona transformation $\DP^3\dashrightarrow\DP^3$ which is a degeneration of the cubo-cubic transformation \cite{BL12}.  
We describe the associated birational geometry briefly, see \cite{BL12, CM13, Pro16} and \cite{KP23} for proofs and precise statements. 

\begin{figure}[h!]
    \centering
    \begin{tikzcd}&& \widehat X \arrow[dl,  "\sigma_1"'] \arrow[dr,  "\sigma_2"] \arrow[dd,  "\pi"]  &&\\
    & X_1  \arrow[dl,  "f_1"'] \arrow[dr,  "\pi_1"]  && X_2 \arrow[dl,  "\pi_2"'] \arrow[dr,  "f_2"]\\
    \DP^3 && X && \DP^3
    \end{tikzcd}
\end{figure}

Denote by $H_i= \sigma_i^*\big(f_i^*\mathcal{O}_{\DP^3}(1)\big)$ for $i=1,2$, and by $H= \pi^*(-K_X)$ the pullbacks to $\widehat X$ (or to any of the models) of the ample generators of $\Pic(\DP^3)$ and of $\Pic (X)$. Given a curve $\Gamma\subset \DP^3$, we (sloppily) denote by $|nH_1-\Gamma|$ the linear system $H^0(\DP^3, \mathcal O_{\DP^3}(n)\otimes \mathcal I_{\Gamma})$ of surfaces of degree $n$ on which $\Gamma$ lies. 
The morphism $f_1$ is the blowup of a smooth rational quintic curve $\Gamma_1\subset \DP^3$ that does not lie on a quadric ($|2H_1-\Gamma_1|= \emptyset$), and there is a unique quadrisecant line $L_1$ to $\Gamma_1$. 
The curve $\Gamma_1$ lies on a cubic surface, $|3H_1-\Gamma_1|$ has dimension $4$ and $\Bs |3H_1-\Gamma_1|= \Gamma_1\cup L_1$. The birational map associated to $|3H_1-\Gamma_1| = |H_2|$ is precisely the Cremona transformation $\DP^3\dashrightarrow \DP^3$ induced by the Sarkisov link above. The threefold $X_1$ is weak Fano, 
\begin{equation*}
     -K_{X_1}\sim H\sim 4H_1-E_1,
\end{equation*}
where $E_1= \Exc f_1$, so that the proper transform of $L_1$ (still denoted $L_1$) is the unique flopping curve on $X_1$. The map $\pi_1$ contracts $L_1$ to a node $\{x_0\}= \operatorname{Sing}(X)\in X$. 

Let $\pi\colon \widehat X\to X$ be the blowup of $x_0$, and $\sigma_1$ the induced map to $X_1$. Note that $X_1$ and $X_2$ are the two small resolutions of the node $x_0$, $\chi\colon X_1\dashrightarrow X_2$ is the associated Atiyah flop and $L_1= \sigma_1(F)$, where $F= \Exc \pi$. 
Then, $\widehat X$ is a weak Fano threefold of $\rho=3$ and we have \cite{KP23}:
\begin{equation*}\label{eq2}
 -K_{\widehat X}\sim H-F \sim H_1+H_2   
\end{equation*}
and from 
\[ H\sim 4H_1-E_1 \sim 4H_2-E_2\]
we deduce
\begin{equation*}\label{eq3}
    H\sim 2(H_1+H_2)- \frac{E_1+E_2}{2} \quad \mbox{and} \quad H_1+H_2 \sim \frac{E_1+E_2}{2}+F.
\end{equation*}

For future reference, let $T_1$ be a cubic surface containing $\Gamma_1$, and denote by $T$ its proper transform on $\widehat{X}$. Since $\Bs|3H_1-\Gamma_1|= \Gamma_1\cup L_1$, $L_1$ lies on $T_1$ and:
\[ T\sim 3\sigma_1^*(f_1^* \mathcal O_{\DP^3}(1))-E_1-F\sim 3H_1-E_1-F\sim H_2,\]
so that
\[ -K_{\widehat X}-uT\sim H_1+H_2-uH_2 \in \mathbb Z_{\geq 0}[H_1]+ \mathbb Z_{\geq 0}[H_2]\subset \operatorname{Nef}(\widehat X) \]
is nef for $0\leq u\leq 1$. 
For $u>1$, $-K_{\widehat X}-uT$ is no longer nef. If $C$ is the proper transform on $\widehat X$ of a minimal rational curve contracted by $f_1$, then $H_1\cdot C=0$ and $H_2\cdot C>0$, so that 
\[ -K_{\widehat X}-uT\sim H_1\cdot C-(u-1)H_2\cdot C<0.\]

We may write for $u\geq 1$
\begin{eqnarray*} -K_{\widehat X}-uT \sim uH_1-(u-1)(H_1+H_2)
&\sim uH_1 -(u-1)(4H_1-E_1-F)\\
&\sim (4-3u) H_1 +(u-1)(E_1+F)\end{eqnarray*}
showing that the pseudo-effective threshold is $u= \frac{4}{3}$, and that $-K_{\widehat X}-uT $ admits a Zariski decomposition with nef positive part $P(u)= (4-3u) H_1 $ and negative part $N(u)= (u-1)(E_1+F)$. 

\subsection{Construction of a member with $\DZ_2\times \DZ_2$-action} We now consider a special member of Family (I).  
Let $C_{(a,b)}$ be the image of the embedding $\DP^1\hookrightarrow\DP^3$ given by
$$[x:y]\to [x^5: ax^4y+bx^2y^3:bx^3y^2+axy^4:y^5]\text{  for }a,b\in \DC^* ;$$ then $C_{(a,b)}$ is a rational quintic curve that does not lie on a quadric surface for $|a|\ne |b|$. The curve $C_{(a,b)}$ is invariant under the action of $G:=\DZ/2\DZ\times\DZ/2\DZ$ on $\DP^3$ defined by:
\begin{align*}
    &\tau_1:[x_0:x_1:x_2:x_3]\to [x_3:x_2:x_1:x_0],\\
    &\tau_2:[x_0:x_1:x_2:x_3]\to [x_0:-x_1:x_2:-x_3].
\end{align*}
In fact, the action of $\tau_1$ (resp.~$\tau_2$) on $C_{(a,b)}$ is induced by that of the involution of $\DP^1$ given by $[x:y]\leftrightarrow [y:x]$ (resp.~$[x:y]\leftrightarrow [x:-y]$).
We consider the element of Family (I) obtained by taking the curve $\Gamma_1= C_{(1,-4)}$.

Since $\Gamma_1$ is $G$-invariant, $L_1$ is also $G$-invariant and $X_1$ and $X$ are endowed with a $G$-action. 
\begin{claim}
 The group $\Aut(X)$ is finite.    
\end{claim}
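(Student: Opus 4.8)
The plan is to show that $\Aut(X)$ is finite by showing it cannot contain a positive-dimensional connected subgroup, which (since $\Aut(X)$ is an algebraic group) forces it to be a finite group. The key structural input is that the whole Sarkisov link of Theorem~\ref{fams}(I) is canonically attached to $X$: the node $x_0=\Sing(X)$ is the unique singular point, so any automorphism of $X$ fixes it and lifts to the blowup $\widehat X$, hence commutes with the flop $\chi$ and descends to biregular (or birational) actions on both copies of $\DP^3$ via $f_1$ and $f_2$. Concretely, $\Aut(X)$ acts on the models $X_1,\widehat X$ preserving $-K$, the exceptional divisors $E_1,E_2,F$, the flopping curve $L_1$, and hence induces an action on $\DP^3=Z_1$ preserving the blown-up quintic $\Gamma_1$ together with its unique quadrisecant line $L_1$.

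First I would record that $\Aut(X)$ is isomorphic (up to finite index subtleties) to a subgroup of $\PGL_4(\DC)$ preserving $\Gamma_1\subset\DP^3$, since the birational invariants $H_1,H_2,E_1,E_2,F$ are permuted in a constrained way and the contraction $f_1\colon X_1\to\DP^3$ is recovered canonically from the Mori structure on $\widehat X$. The heart of the argument then becomes: the stabiliser in $\PGL_4(\DC)$ of the rational quintic curve $\Gamma_1=C_{(1,-4)}$ is finite. Since $\Gamma_1$ is the image of a fixed embedding $\DP^1\hookrightarrow\DP^3$, any automorphism of $\DP^3$ preserving $\Gamma_1$ restricts to an automorphism of $\Gamma_1\cong\DP^1$, giving a homomorphism $\operatorname{Stab}(\Gamma_1)\to\Aut(\DP^1)=\PGL_2(\DC)$. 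I would analyse both the kernel and the image of this map. The kernel consists of projective transformations fixing $\Gamma_1$ pointwise; because $\Gamma_1$ is nondegenerate (it spans $\DP^3$, lying on no hyperplane), five points in general position on $\Gamma_1$ pin down the transformation, so the kernel is trivial.

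For the image, I would argue that only finitely many $g\in\PGL_2(\DC)$ extend to a projective automorphism of $\DP^3$ preserving $C_{(1,-4)}$. The parametrisation is by forms of degree $5$, and the four coordinate functions $x^5,\ x^4y+\tfrac14x^2y^3,\ -4x^3y^2+xy^4,\ y^5$ (with the explicit $a=1,b=-4$) span a specific $4$-dimensional subspace $V\subset H^0(\DP^1,\MO(5))$. An automorphism $g\in\PGL_2$ extends iff $g^*V=V$, i.e.\ iff $V$ is invariant under the induced action on the $6$-dimensional space of quintic forms. This is a finite set: the stabiliser of a generic $4$-plane in $\operatorname{Sym}^5(\DC^2)$ under the $\PGL_2$-action is finite, and I would verify that the explicit $V$ for $(a,b)=(1,-4)$ is not among the special configurations with infinite stabiliser. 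The concrete presence of the two involutions $\tau_1,\tau_2$ gives the lower bound $G\hookrightarrow\operatorname{Stab}(\Gamma_1)$, and the generic finiteness gives the upper bound.

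The main obstacle I expect is the passage from $\Aut(X)$ to $\operatorname{Stab}_{\PGL_4}(\Gamma_1)$: one must check carefully that every automorphism of $X$ genuinely descends to a \emph{biregular} automorphism of $\DP^3$ (not merely a birational self-map), which requires that the Mori chamber structure and the labelling of the two extremal contractions $f_1,f_2$ be respected; a priori an automorphism could swap the two $\DP^3$-models via $\chi$. I would handle this by passing to the index-$\leq 2$ subgroup fixing each contraction, which suffices for finiteness since finiteness is insensitive to finite index. The remaining quantitative step---confirming that the specific $4$-plane $V$ has finite $\PGL_2$-stabiliser---is a direct, if slightly tedious, computation comparing $g^*V$ with $V$ for $g\in\PGL_2(\DC)$, and I would reduce it to showing that the only Möbius transformations preserving the zero-divisors and branch data encoded by $V$ form a finite group containing the visible symmetries $[x:y]\mapsto[y:x]$ and $[x:y]\mapsto[x:-y]$.
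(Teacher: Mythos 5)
Your reduction steps are sound and partly coincide with the paper's: the paper also identifies $\Aut(X)$ with $\Aut(\DP^3,\Gamma_1)$ and regards it as a subgroup of $\Aut(\DP^1)=\PGL_2(\DC)$ via restriction to $\Gamma_1\cong\DP^1$ (with trivial kernel since $\Gamma_1$ spans $\DP^3$). Your handling of the possible swap of the two $\DP^3$-models through the flop, by passing to a subgroup of index at most $2$, is a legitimate (indeed slightly more careful) way to justify that identification.

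The problem is the final, decisive step. You reduce finiteness to the claim that the stabiliser in $\PGL_2(\DC)$ of the $4$-plane $V\subset H^0(\DP^1,\MO_{\DP^1}(5))$ defining the embedding is finite, and you support this by (i) generic finiteness of stabilisers of $4$-planes in $\operatorname{Sym}^5(\DC^2)$, plus (ii) a promise to ``verify that the explicit $V$ for $(a,b)=(1,-4)$ is not among the special configurations with infinite stabiliser.'' Item (i) carries no force here: $V$ is emphatically \emph{not} generic --- it was engineered to be invariant under the group $G=\DZ/2\DZ\times\DZ/2\DZ$, so one cannot rule out extra degeneracy by genericity; and item (ii), the actual computation, is never performed. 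Since the entire content of the claim is exactly this finiteness, your proof has a genuine gap at its only essential point.

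The paper closes this gap with no computation at all, using a geometric feature you did not exploit: $\Gamma_1$ has a \emph{unique} quadrisecant line $L_1$. Any automorphism of $\DP^3$ preserving $\Gamma_1$ must therefore preserve $L_1$, hence permute the four points of $\Gamma_1\cap L_1$. Under the identification $\Gamma_1\cong\DP^1$, the image of $\Aut(\DP^3,\Gamma_1)$ in $\PGL_2(\DC)$ permutes four distinct points; since a M\"obius transformation fixing three points is the identity, this image embeds into $S_4$ and is finite. If you want to salvage your approach, replace the appeal to genericity by this quadrisecant argument, or else actually carry out the stabiliser computation for your specific $V$ (e.g.\ by showing no one-parameter subgroup $\Gm$ or $\Ga$ of $\PGL_2$ preserves $V$).
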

\begin{proof}
The curve $\Gamma_1$ is not contained in a hypersurface of $\DP^3$, the stabilizer of $\Gamma$ in $\Aut(\DP^3)$ is $\Aut(\DP^3; \Gamma_1) \simeq \Aut(\Gamma_1)\simeq \Aut(\DP^1)$. By construction of $X$, $\Aut(X)$ is a subgroup of the group $\Aut(\DP^3, \Gamma_1)\simeq \Aut(\DP^1)$ that preserves the four points of intersection $\Gamma_1\cap L_1$, so it is a finite group.  
\end{proof}
We will apply Theorem~\ref{equivGstab} to prove that $X$ is K-stable. To do so, we first describe possible centres of $G$-invariant divisors over $X$. In what follows, $\Xi$ always denotes a $G$-invariant prime divisor over $X$.

\begin{claim}\label{cla1}
If the centre of $\Xi$ on $X$ is $0$-dimensional, it is the singular point $c_X(\Xi)= \{x_0\}$.     
\end{claim}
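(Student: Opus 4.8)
The plan is to reduce the claim to a statement about fixed points and then to a short linear-algebra computation on $\DP^3$. Since $\Xi$ is $G$-invariant, the center $c_X(\Xi)$ is a $G$-invariant irreducible closed subset of $X$: indeed $g\big(c_X(\Xi)\big)=c_X(g\cdot\Xi)=c_X(\Xi)$ for all $g\in G$. An irreducible $0$-dimensional such subset is a single point fixed by $G$. Thus the claim is equivalent to the assertion that $x_0$ is the \emph{only} $G$-fixed point of $X$. Note that $x_0$ is $G$-fixed, because $G$ acts on $X$ by automorphisms and must preserve $\Sing(X)=\{x_0\}$; so the content is that there are no \emph{other} $G$-fixed points.

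Next I would transfer the question to $\DP^3$ using the $G$-equivariance of the link. Both $\pi_1\colon X_1\to X$ and $f_1\colon X_1\to\DP^3$ are $G$-equivariant, since $\Gamma_1$ is $G$-invariant. The morphism $\pi_1$ is an isomorphism over $X\setminus\{x_0\}$ (it only contracts $L_1$ to $x_0$), so any $G$-fixed point $p\in X\setminus\{x_0\}$ lifts to a $G$-fixed point of $X_1$; its image $f_1(p)\in\DP^3$ is then a $G$-fixed point of $\DP^3$. Hence it suffices to prove that $G$ acts on $\DP^3$ without fixed points. (As a consistency check, the exceptional loci cause no trouble: $E_1$ lies over $\Gamma_1$ and $L_1$ over $x_0$, and $\Gamma_1\cong\DP^1$ itself carries no $G$-fixed point, the Klein four-group acting on $\DP^1$ being fixed-point-free.)

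Finally I would show $G$ has no fixed point on $\DP^3$. Lifting $\tau_1,\tau_2$ to matrices in $\GL_4(\DC)$, a direct computation gives $\tau_1\tau_2=-\tau_2\tau_1$, so the lifts generate a non-abelian group $\widetilde G\subset\GL_4(\DC)$ of order $8$ with $-\Id=(\tau_1\tau_2)^2$. A $G$-fixed point of $\DP^3$ corresponds to a $\widetilde G$-invariant line in $\DC^4$, i.e. a $1$-dimensional subrepresentation; equivalently $\tau_1$ and $\tau_2$ would have a common eigenvector, which is impossible once they anticommute. (Representation-theoretically: any character of $\widetilde G$ is trivial on the commutator $-\Id=[\tau_1,\tau_2]$, contradicting that $-\Id$ acts as $-\Id$.) Therefore $G$ acts fixed-point-freely on $\DP^3$, and combining with the previous step gives $c_X(\Xi)=\{x_0\}$.

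The main obstacle is not any single hard step but making the reduction through the link rigorous: the map $X\dashrightarrow\DP^3$ is only birational, so one must argue on the resolving model $X_1$ and keep careful track of the exceptional loci, so as to be sure no spurious $G$-fixed points are hidden on $E_1$ or $L_1$. Once the equivariant diagram is used correctly, the remaining input is the elementary fact that the two commuting involutions of $\DP^3$ lift to anticommuting matrices, which I expect to verify by a one-line matrix computation.
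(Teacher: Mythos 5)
Your proposal is correct and follows essentially the same route as the paper, whose entire proof is the observation that $G$ has no fixed point on $\DP^3$; you simply supply the details the paper leaves implicit (the reduction through the $G$-equivariant maps $\pi_1$, $f_1$, and the anticommuting-lifts computation showing $\tau_1,\tau_2$ have no common eigenvector). Both the reduction and the linear-algebra verification are sound.
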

    
\begin{proof}
    There is no point of $\DP^3$ fixed by the action of $G$.
\end{proof}

\begin{claim}\label{cla2}
If the centre of $\Xi$ on $\DP^3$ is a line $L$, then  
\[ L=L_{[\lambda: \mu]}=\begin{cases} \lambda x_0+\mu x_2=0,\\ \lambda x_3+\mu x_1=0. \end{cases}.\]
All $G$-invariant lines lie on the quadric $Q= \{ x_1x_0-x_2x_3=0\}$. Any two distinct $G$-invariant lines are disjoint. A $G$-invariant line $L\neq L_1$ is either disjoint from $\Gamma_1$ or meets $\Gamma_1$ in precisely two points.  
\end{claim}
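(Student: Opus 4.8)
The plan is to determine all $G$-invariant lines by linear algebra and then read off the three remaining assertions. A line $L\subset\DP^3$ is $G$-invariant precisely when the corresponding $2$-dimensional subspace $W\subset\DC^4$ is preserved by the linear maps $\tau_1,\tau_2$ (it is harmless that these do not commute as linear maps, since only invariance of $W$ as a subspace matters). Writing $e_0,\dots,e_3$ for the standard basis, $\tau_2=\mathrm{diag}(1,-1,1,-1)$ is an involution, so any $\tau_2$-invariant $W$ splits as $W=(W\cap V_+)\oplus(W\cap V_-)$ with $V_+=\langle e_0,e_2\rangle$ and $V_-=\langle e_1,e_3\rangle$. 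Since $\tau_1$ interchanges $e_0\leftrightarrow e_3$ and $e_1\leftrightarrow e_2$, it swaps $V_+$ and $V_-$; hence $W=V_+$ and $W=V_-$ are not invariant, and the only remaining possibility is $\dim(W\cap V_\pm)=1$. Thus $W=\langle\alpha e_0+\beta e_2,\ \gamma e_1+\delta e_3\rangle$, and requiring $\tau_1(W)=W$ yields the single relation $\alpha\gamma=\beta\delta$. Rewriting $W$ as a system of two linear equations for $L$ then recovers exactly the pencil $L_{[\lambda:\mu]}$.

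For the quadric, substituting the parametrization of $L_{[\lambda:\mu]}$ into $x_0x_1-x_2x_3$ shows it vanishes identically, so every $G$-invariant line lies on $Q$. Because $Q$ is a smooth quadric, this one-parameter family is one of its two rulings, and distinct lines in a ruling are disjoint; equivalently, intersecting $L_{[\lambda:\mu]}$ and $L_{[\lambda':\mu']}$ reduces to two $2\times2$ linear systems whose determinants are $\pm(\lambda\mu'-\lambda'\mu)\neq0$, forcing all coordinates to vanish.

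It remains to intersect a member with $\Gamma_1=C_{(1,-4)}$. Substituting the parametrization $[x:y]\mapsto[x^5:x^4y-4x^2y^3:-4x^3y^2+xy^4:y^5]$ into the two equations of $L_{[\lambda:\mu]}$ gives $x\,p(x,y)=0$ and $y\,q(x,y)=0$, where $p=\lambda x^4-4\mu x^2y^2+\mu y^4$ and $q(x,y)=p(y,x)$. The solutions $[1:0]$ and $[0:1]$ occur only when $\lambda=0$; away from them one needs $p=q=0$, and since $p-q=(\lambda-\mu)(x^4-y^4)$, the case $\lambda\neq\mu$ forces $x^4=y^4$, after which $p=0$ holds for some such point exactly when $\lambda=3\mu$ or $\lambda=-5\mu$, each producing precisely two points. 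When $\lambda=\mu$ instead, $p$ becomes $\mu(x^4-4x^2y^2+y^4)$, whose four distinct roots give four intersection points; a short check of the remaining value $\lambda=0$ again yields two points.

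Thus every $G$-invariant line meets $\Gamma_1$ in $0$, $2$, or $4$ points, and $[\lambda:\mu]=[1:1]$ is the only member meeting it in four. Since $L_1$ is a $G$-invariant quadrisecant, it must be this member, $L_1=L_{[1:1]}$; consequently any $L\neq L_1$ meets $\Gamma_1$ in $0$ or $2$ points, as claimed. I expect the intersection analysis to be the main obstacle: the case split must be organized carefully, and the decisive point is that the specific coefficient $b=-4$ makes $[1:1]$ the unique quadrisecant, while the other distinguished values $3\mu$ and $-5\mu$ give mere bisecants.
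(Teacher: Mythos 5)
Your proposal is correct, and it is in fact more complete than the paper's own proof, which derives the pencil $L_{[\lambda:\mu]}$ and then leaves the three geometric assertions as a ``Check that\dots''. Your derivation of the classification differs in mechanism from the paper's: the paper works with the pencil $\mathscr H$ of hyperplanes through $L$, using that $G$-invariance of $L$ forces $\tau_1,\tau_2$ to act on $\mathscr H$ and intersecting a $\tau_2$-invariant member with its $\tau_1$-image; you instead decompose the underlying $2$-plane $W$ into eigenspaces of $\tau_2$, note that $\tau_1$ swaps $V_+=\langle e_0,e_2\rangle$ and $V_-=\langle e_1,e_3\rangle$ (so $\dim(W\cap V_\pm)=1$), and extract the single relation $\alpha\gamma=\beta\delta$ --- a cleaner route to the same pencil, and your remark that the non-commutation of the chosen linear lifts is harmless is a point worth making explicit. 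The computational part matches the paper's asserted answer exactly: substituting the parametrization of $C_{(1,-4)}$ gives $x\,p(x,y)=y\,q(x,y)=0$ with $q(x,y)=p(y,x)$, the difference $p-q=(\lambda-\mu)(x^4-y^4)$ isolates the special values, and your enumeration --- bisecants at $[\lambda:\mu]\in\{[0:1],[3:1],[-5:1]\}$, the quadrisecant only at $[1:1]$, hence $L_1=L_{[1:1]}$ by uniqueness and $G$-invariance of the quadrisecant --- is precisely the paper's list. The only redundancy is the appeal to the rulings of the smooth quadric for disjointness, since your $2\times 2$ determinant argument already settles it.
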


\begin{proof}
Let $L\subset \DP^3$ be a G-invariant line, and consider any two distinct hyperplanes $H_1=\{f_1=0\}$ and $H_2= \{ f_2=0\}$ containing $L$, so that $L= H_1\cap H_2= \{ f_1=f_2=0\}$. Then, $L= \Bs \mathscr H$ is the base locus of the pencil $\mathscr H= \{ u f_1+v f_2=0; [u:v] \in \DP^1\}$. 

The line $L= \Bs \mathscr H$ is $G$-invariant precisely when $G$ fixes $\mathscr H$, or equivalently when both $\tau_1$ and $\tau_2$ induce involutions on $\mathscr H$, and on its base $\DP^1$. Up to reparametrizing the pencil $\mathscr H$ we may assume that $[u:v]= [1:0]$ is a $\tau_2$-invariant hyperplane, that is, the linear form $f_1(x_0, \cdots , x_3)$ is one of
\[ \lambda x_0+ \mu x_2 \mbox{ or } \lambda x_3+ \mu x_1 \mbox{ for } [\lambda:\mu]\in \DP^1,\]
and $H_1= \{ \lambda x_0+\mu x_2=0\}$ or $H_1= \{ \lambda x_3+\mu x_1=0\}$.
The condition that $\mathscr H$ is $G$-invariant is then that $\tau_1\cdot H_1$ is a fibre of the pencil, so that (noting that $H_1$ is not fixed by $\tau_1$)
\[ L= L_{[\lambda: \mu]}= H_1\cap \tau_1\cdot H_1= \begin{cases}
    \lambda x_0+ \mu x_2=0,\\
    \lambda x_3+ \mu x_1=0.
\end{cases}\]
which gives the desired expression. 

Check that $L_{[\lambda: \mu]}\subset Q$ for all $[\lambda: \mu] \in \DP^1$, that $L_{[\lambda: \mu]}\cap L_{[\lambda':\mu']}= \emptyset$ for $[\lambda: \mu]\neq [\lambda': \mu']$, and that $L_{[\lambda:\mu]}\cap \Gamma_1= \emptyset$ unless $[\lambda:\mu]\in \{ [0:1], [3:1], [-5:1]\}$ and  $L_{[\lambda:\mu]}\cap \Gamma_1$ consists of 2 points, or $[\lambda:\mu]= [1:1]$ and $L_{[1:1]}= L_1$ is the unique quadrisecant to $\Gamma_1$.
\end{proof}

\begin{remark}
    Given that the Sarkisov link of which $X$ is a midpoint is $G$-equivariant, $E_2$ and $\Gamma_2= \Gamma^+$ are also invariant under the induced $G$-action. Since the map $\DP^3\dashrightarrow\DP^3$ is induced by $|H_2| = |3H_1-\Gamma_1|$, the fibres of $E_2\to \Gamma_2$ are the transforms of trisecant lines of $\Gamma_1$. Since none of these are $G$-invariant, the action of $G$ on $\Gamma_2$ does not fix $\Gamma_2$ pointwise either. 
\end{remark}

\begin{claim}\label{cla3}
Let $H_{[\lambda:\mu]}$ be a general hyperplane containing $L_{[\lambda: \mu]}$. 
    
    Then $H_{[\lambda: \mu]}\cap \Gamma_1= \{ b_1, \cdots , b_5\}$ and $H_{[\lambda: \mu]}\cap L_1= \{ b_0\}$, where $b_1, \cdots , b_5$ (resp. $b_0, \cdots , b_5$) consists of $5$ (resp.~$6$) points in general position.  
\end{claim}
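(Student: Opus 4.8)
The plan is to make everything explicit through the parametrization $\phi\colon\DP^1\to\Gamma_1$ and the pencil of hyperplanes containing $L_{[\lambda:\mu]}$, reducing each assertion to an open (generic) condition and then checking that no such condition is identically violated. I would first settle $b_0$. Since $[\lambda:\mu]\neq[1:1]$, the lines $L_{[\lambda:\mu]}$ and $L_1=L_{[1:1]}$ are two \emph{distinct} $G$-invariant lines, hence disjoint by Claim~\ref{cla2}; two disjoint lines in $\DP^3$ span the whole space, so no hyperplane contains both. Consequently every member $H$ of the pencil through $L_{[\lambda:\mu]}$ meets $L_1$ in a single point $b_0$, with $b_0\notin L_{[\lambda:\mu]}$. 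As $H$ varies in the pencil, $b_0$ sweeps out $L_1$, so it coincides with one of the four points of $\Gamma_1\cap L_1$ only for finitely many members; thus $b_0\notin\Gamma_1$ for general $H$.

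Next, the five points $b_1,\dots,b_5$. Pulling back by $\phi$, the pencil of hyperplanes through $L_{[\lambda:\mu]}$ restricts to a pencil $\mathcal P$ of degree-$5$ divisors on $\Gamma_1\cong\DP^1$ whose base divisor is $\Gamma_1\cap L_{[\lambda:\mu]}$; by Claim~\ref{cla2} this is empty or two reduced points $p,q$. Discarding these fixed points, the moving part is a base-point-free pencil of binary forms of degree $5$ or $3$, and such a pencil defines a finite morphism $\DP^1\to\DP^1$ of degree $\ge 3$; by Riemann--Hurwitz its non-reduced fibres occur over only finitely many parameters. Hence the general member of $\mathcal P$ is reduced and, in the two-point case, its moving support avoids $p$ and $q$. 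Together with $b_0\notin\Gamma_1$, this yields six \emph{distinct} points $b_0,b_1,\dots,b_5$ for general $H$.

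It remains to prove general position: no three of $b_1,\dots,b_5$ collinear, no three of $b_0,\dots,b_5$ collinear, and $b_0,\dots,b_5$ not lying on a common conic in $H\cong\DP^2$. All of these are open conditions on $([\lambda:\mu],[s:t])$, so it suffices to show none is identically false. The key structural remark is that a line meeting $H$ in at least two points lies in $H$: thus three collinear points among the $b_i\in\Gamma_1$ would force $H$ to contain a trisecant of $\Gamma_1$, while a collinearity involving $b_0$ would force a secant of $\Gamma_1$ through a point of $L_1$ to lie in $H$. Since $L_{[\lambda:\mu]}$ is itself at most a bisecant (Claim~\ref{cla2}), I would argue that the pencil through $L_{[\lambda:\mu]}$ is not contained in the (at most two-dimensional) locus of hyperplanes containing a trisecant, nor in the locus forcing such a secant; the conic condition I would reduce to the non-vanishing of an explicit $6\times 6$ determinant in $([\lambda:\mu],[s:t])$.

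The main obstacle is precisely this last step: excluding \emph{pathological pencils} whose general member is forced into special position. Because $\Gamma_1=C_{(1,-4)}$ is so symmetric, the cleanest route is to exhibit a single explicit member $H$ of the pencil for which all collinearity $3\times3$ minors, the secant conditions, and the $6\times6$ conic determinant are nonzero---a short Maple computation with $a=1$, $b=-4$---and then invoke openness to conclude the statement for a general member of the pencil, and for general $[\lambda:\mu]$.
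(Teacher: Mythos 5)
Your distinctness argument (the first two paragraphs) is sound and in fact more explicit than the paper's, which simply asserts that the general member of the pencil meets $\Gamma_1\cup L_1$ in six distinct points; the reduction to a pencil of binary quintics with base divisor $\Gamma_1\cap L_{[\lambda:\mu]}$ and Riemann--Hurwitz for the moving part is fine. The genuine gap is in the general-position half, at exactly the point you call ``the main obstacle''. Your proposed fix --- verify one explicit member $H$ of the pencil numerically and invoke openness --- proves only that the conclusion holds for a \emph{general pair} $([\lambda:\mu],H)$. But the claim fixes an arbitrary $[\lambda:\mu]$ and asserts the conclusion for a general $H$ in \emph{that} pencil, and this stronger quantification is what is actually used: in Lemma~\ref{dim1a} the centre $Z$ can be any $G$-invariant line, in particular one of the three bisecants $L_{[0:1]}$, $L_{[3:1]}$, $L_{[-5:1]}$ of Claim~\ref{cla2} (this is precisely Case~2 of that lemma), which are specific, non-general values of $[\lambda:\mu]$. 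Since the locus of hyperplanes containing a trisecant (or a bisecant meeting $L_1$) is two-dimensional in $(\DP^3)^{*}$, it can perfectly well contain an entire pencil, and openness propagated from a single explicit point cannot exclude that this happens for some special $[\lambda_0:\mu_0]$. To repair your argument one would have to run the elimination with $[\lambda:\mu]$ symbolic and show the relevant resultants vanish for no value of the parameter --- a much heavier computation than the single evaluation you describe.

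The paper closes this gap with a geometric mechanism that your proposal does not use and which works uniformly for every fixed pencil: a line through three of $b_1,\dots,b_5$ (or through $b_0$ and two of them), and a conic through all six points, is a curve on which the cubics of $|3H_1-\Gamma_1|$ --- the system cutting out the Cremona map, whose base locus also contains $L_1$ --- restrict to a one-dimensional space of sections, so any such curve is contracted to a point of $\Gamma_2$. If the general member of a fixed pencil $\mathscr H$ were in special position, these contracted curves could not be constant along the pencil (the only curve common to all members is $L_{[\lambda:\mu]}$ itself, which is at most a bisecant), so one would obtain a non-constant, hence dominant, rational map $\DP^1\dashrightarrow\Gamma_2$ from the base of $\mathscr H$, which the paper shows is absurd. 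That contraction argument is the missing idea: it replaces your case-by-case determinant checks and, unlike openness from one sample point, it rules out pathological pencils for \emph{every} $[\lambda:\mu]$.
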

    \begin{proof}
       Fix $[\lambda: \mu]\in \DP^1$, and let $\mathscr H$ be the pencil of hyperplanes containing $L_{[\lambda:\mu]}$. We compute that the general fibre of $\mathscr H$ intersects $\Gamma_1\cup L_1$ in $6$ distinct points. Assume that for some fibre $H$ of $\mathscr H$, $3$ of the $5$ points of $H\cap \Gamma_1$ lie on a line (resp.~the $6$ points $H\cap \big(\Gamma_1\cup L_1\big)$ lie on a conic). Then, this line (resp.~conic) is contracted by the Cremona transformation $\DP^3\dashrightarrow \DP^3$ to a point lying on $\Gamma_2$. If the points of intersection of a general hyperplane containing $L_{[\lambda: \mu]}$ are not in general position, then we define a dominant rational map $\DP^1\dashrightarrow \Gamma_2$ from the base of $\mathscr H$ to $\Gamma_2$, leading to a contradiction.

    \end{proof}

We now turn to the proof that no $G$-invariant prime divisor $\Xi$ over $X$ with $\beta(\Xi)\leq 0$ has $1$-dimensional centre $Z=c_{\DP^3}(\Xi)$. If $Z$ is $1$-dimensional, then either $Z= \Gamma_1$, or by Lemma~\ref{lemma144}, $Z$ is the union of $1$-dimensional components of $\Nklt(\DP^3, B)$ for some $B\sim \mathcal{O}_{\DP^3}(4\lambda)$ with $\lambda\in \DQ$, $\lambda<3/4$. Then, by Lemma~\ref{CorollaryA13}, $Z$ can only be a line. 

\begin{lemma}\label{dim1a}
    If $Z=c_{\DP^3}(\Xi)$ is a $G$-invariant line distinct from $L_1$, $\beta(\Xi)>0$. 
\end{lemma}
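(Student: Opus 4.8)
The plan is to bound $\beta(\Xi)=A_X(\Xi)-S_X(\Xi)$ from below by proving $A_X(\Xi)/S_X(\Xi)>1$, via the Abban--Zhuang inequality of Theorem~\ref{corollary:Kento-formula-Fano-threefold-surface-curve} applied on a suitable model. First I would transfer the problem to the smooth weak Fano $X_1=\Bl_{\Gamma_1}\DP^3$. Since $L=c_{\DP^3}(\Xi)$ is a $G$-invariant line distinct from $L_1$, Claim~\ref{cla2} shows it is disjoint from $L_1$, so its proper transform $Z$ on $X_1$ avoids the flopping curve. As $\pi_1\colon X_1\to X$ is a small crepant morphism that is an isomorphism near $Z$, we have $-K_{X_1}=\pi_1^*(-K_X)=H$, $A_X(\Xi)=A_{X_1}(\Xi)$ and $S_X(\Xi)=S_{X_1}(\Xi)$, so it suffices to prove $A_{X_1}(\Xi)/S_{X_1}(\Xi)>1$. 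I would then apply the Mori dream space version of Theorem~\ref{corollary:Kento-formula-Fano-threefold-surface-curve} (valid by the Remark following it) to the flag $Z\subset Y\subset X_1$, where $Y$ is the proper transform of a general $G$-invariant hyperplane $H_{[\lambda:\mu]}$ through $L$. By Claim~\ref{cla3}, $Y$ is a smooth del Pezzo surface of degree $4$, namely $\DP^2$ blown up at the five points $b_1,\dots,b_5=H_{[\lambda:\mu]}\cap\Gamma_1$; writing $\ell$ for the line class and $e_1,\dots,e_5$ for the exceptional curves, one has $Y\sim H_1$, $H_1|_Y=\ell$, $E_1|_Y=e_1+\cdots+e_5$, and $H_2|_Y=3\ell-\sum e_i=-K_Y$. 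The target is $\min\{1/S_{X_1}(Y),\,1/S(W^Y_{\bullet,\bullet};Z)\}>1$, i.e.\ that both $S$-invariants are $<1$.

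For $S_{X_1}(Y)$ I would compute the Zariski decomposition of $-K_{X_1}-uY\sim(1-u)H_1+H_2$ across the flop $\chi$. Using $E_1=3H_1-H_2$ and the intersection numbers on $X_2$ (where $H_1^3=0$, $H_1^2H_2=4$, $H_1H_2^2=3$, $H_2^3=1$), the class is nef on $X_2$ for $u\in[0,1]$ with $\mathrm{vol}=12(1-u)^2+9(1-u)+1$, while for $u\in[1,4/3]$ the positive part is $(4-3u)H_2$ with negative part $(u-1)E_2$; the pseudo-effective threshold is $\tau=4/3$. Integrating gives $S_{X_1}(Y)=115/264<1$, so the first term is comfortably larger than $1$.

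The crux is $S(W^Y_{\bullet,\bullet};Z)$. The negative-part term vanishes: $N(u)|_Y$ is supported on $E_2|_Y$, whose components map into $\Gamma_2$, whereas $Z$ is the transform of a line meeting $\Gamma_1$ in at most two points, hence not a trisecant and not contracted by $f_2$, so $Z\not\subset E_2$ and $\ord_Z(N(u)|_Y)=0$. It remains to evaluate $\frac{3}{22}\int_0^{4/3}\int_0^\infty \mathrm{vol}(P(u)|_Y-vZ)\,dv\,du$ on the surface $Y$, where $P(u)|_Y=(4-u)\ell-\sum e_i$ for $u\in[0,1]$ and $P(u)|_Y=(4-3u)(3\ell-\sum e_i)$ for $u\in[1,4/3]$. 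I would split into the two cases of Claim~\ref{cla2}. If $L\cap\Gamma_1=\emptyset$ then $Z=\ell$, and a chamber-by-chamber Zariski decomposition on $Y$ (tracking the conic $2\ell-\sum e_i$ and the lines $\ell-e_i-e_j$) yields $S(W^Y_{\bullet,\bullet};Z)=113/176<1$. If $L$ meets $\Gamma_1$ in two points, say $b_1,b_2$, then $Z=\ell-e_1-e_2$ is a $(-1)$-curve, and the same method — now passing through the walls where $e_1,e_2$ and the three lines $\ell-e_i-e_j$ with $i,j\in\{3,4,5\}$ successively enter the negative part — gives $S(W^Y_{\bullet,\bullet};Z)=515/528<1$.

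The main obstacle is precisely this last case: the margin is razor-thin, the ratio $515/528$ being barely below $1$, so the piecewise volume computation on $Y$ must be carried out exactly, with particular care at the flop wall $u=1$ and at each change of Zariski chamber in the variable $v$. Once both cases are verified, Theorem~\ref{corollary:Kento-formula-Fano-threefold-surface-curve} gives $A_{X_1}(\Xi)/S_{X_1}(\Xi)\geq\min\{264/115,\,528/515\}>1$ in the worst case, hence $\beta(\Xi)>0$ as required.
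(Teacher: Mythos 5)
Your proposal follows the same overall strategy as the paper --- apply Theorem~\ref{corollary:Kento-formula-Fano-threefold-surface-curve} to the flag (line) $\subset$ (transform of a general hyperplane through the line) $\subset$ (threefold), and show both $S$-invariants are less than $1$ --- but you implement it on $X_1$ with the degree-$4$ del Pezzo surface $Y$ (five points blown up), whereas the paper works on $\widehat X$ with the cubic surface $S=\Bl_{b_0}Y$, where $b_0=Y\cap L_1$. I have checked your three integrals and they are correct computations of the quantities you set up: $S_{X_1}(Y)=115/264$, and the two dP4 integrals do evaluate to $113/176$ and $515/528$. Nevertheless, there are two genuine gaps in the justification.

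First, and most importantly: the theorem (also in its Mori dream space form) requires the Zariski decomposition of $-K_{X_1}-uY$ to have \emph{nef} positive part, and on $X_1$ the divisor $(1-u)H_1+H_2$ is not nef for $u>0$, since its intersection with the flopping curve $L_1$ equals $-u$; it becomes nef only after the flop. Because $L_1$ meets $Y$ at the point $b_0$, the flop does not leave $Y$ alone: the surface on the correct model is $\Bl_{b_0}Y$ (the paper's cubic surface), and the correct restricted positive part is $(4-u)\ell-\sum_{i=1}^{5}e_i-ue_0$ for $u\in[0,1]$ and $(4-3u)\bigl(3\ell-\sum_{i=0}^{5}e_i\bigr)$ for $u\in[1,4/3]$, not your $(4-u)\ell-\sum_{i=1}^{5}e_i$ and $(4-3u)\bigl(3\ell-\sum_{i=1}^{5}e_i\bigr)$. (Check: $-\tfrac13\tfrac{d}{du}\mathrm{vol}(-K_{X_1}-uY)=11-8u$, which equals $\bigl((4-u)\ell-\sum_{i=1}^{5}e_i-ue_0\bigr)^2$, whereas your class has square $11-8u+u^2$.) Your computation can be salvaged: subtracting the effective class $ue_0$ (resp.\ $(4-3u)e_0$) only decreases volumes, so your dP4 integrands dominate the correct ones and your bounds $113/176,\,515/528<1$ do imply $S\bigl(W^{S}_{\bullet,\bullet};Z\bigr)<1$; but this monotonicity argument is absent from your write-up, and as literally written your identification of ``$P(u)|_Y$'' is not the object the theorem requires. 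Second, your argument that $\ord_Z\bigl(N(u)|_Y\bigr)=0$ is incomplete: from ``$Z$ is not a trisecant, hence not contracted by $f_2$'' one cannot conclude $Z\not\subset E_2$, because $E_2$ also contains curves that dominate $\Gamma_2$ (multisections of $E_2\to\Gamma_2$), not only the contracted fibres. This can be repaired: if $Z\subset E_2$, the image of $Z$ under the cubo-cubic map lies in $\Gamma_2$ and, $Z$ being non-contracted, is a curve of degree at most $3$, contradicting that $\Gamma_2$ is an irreducible quintic. Alternatively, even without excluding $Z\subset E_2$, the first term contributes at most $\tfrac{3}{22}\int_1^{4/3}3(4-3u)^2\cdot2(u-1)\,du=\tfrac{1}{132}$, which does not affect your conclusion. (The paper takes this second, ``safe'' route, bounding $\ord_Z(E_2|_S)\le 2$ and absorbing a first-term contribution of at most $1/3$.)

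One further point of comparison is worth recording. Your threefold volume $\mathrm{vol}(-K_{X_1}-uY)=12(1-u)^2+9(1-u)+1$ for $u\in[0,1]$, hence $S_{X_1}(Y)=115/264$, disagrees with the paper's $\tfrac{11(4-3u)^3}{32}$, hence $S_X(S)=1/3$; your value is the correct one. Indeed, on $X_1$ one has $H-\tfrac12H_1=\tfrac12(H+H_2)$, which is nef on $X_2$ with $\mathrm{vol}=\tfrac18(H+H_2)^3=\tfrac{17}{2}$, strictly larger than the paper's $\tfrac{11\cdot(5/2)^3}{32}=\tfrac{1375}{256}$: the decomposition $P(u)=(1-\tfrac{3u}{4})H$, $N(u)=u(\tfrac{E_2}{4}+F)$ used in the paper is an effective decomposition but not the Zariski decomposition for $0<u<1$, since $E_2$ is not in the asymptotic base locus there; the correct negative part is $uF$ for $u\in[0,1]$, with $E_2$ entering only for $u>1$, exactly as in your piecewise description. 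So, once the two gaps above are closed (or once one simply runs your computation on the cubic surface from the start), your argument is not merely an alternative route: it is the computation that actually establishes the lemma.
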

\begin{proof}
We will use Lemma~\ref{CorollaryA13} to find a lower bound for $\beta(\Xi)$. To this effect, we find an irreducible normal surface $S\subset \widehat X$ containing $Z$ and use the inequality
\begin{equation*}
\frac{A_{X}(\Xi)}{S_{X}(\Xi)}\geqslant\min\Bigg\{\frac{1}{S_{X}(S)},\frac{1}{S\big(W^S_{\bullet,\bullet};Z\big)}\Bigg\}.
\end{equation*}

Let $S_1\subset X_1$ and $S\subset \widehat X$ be the pullbacks to $X_1$ and $\widehat X$ of a general hyperplane containing $Z$. By Claim~\ref{cla3}, $S_1\subset X_1$ is a del Pezzo surface of degree $4$ and $S$ is a cubic surface. Recall that $E_1$ and $E_2$ are the $f_1$ and $f_2$ exceptional divisors, and that $F$ is the $\pi$-exceptional divisor. All these are $G$-invariant, and $E_2$ is covered by the (proper transforms of) trisecant lines of $\Gamma_1$. 

We first compute $S_X(S)$; on $\widehat X$, we have the following intersection numbers: 
\begin{align*}
    &S^3=1,& &S^2\cdot E_1 = 0, & &S\cdot E_1^2=-5, & &E_1^3=-18,\\
    &      & &S^2\cdot F = 0, & &S\cdot F^2=-1, & &F^3=2,\\
    &      & &S\cdot E_1\cdot F=0,& &E_1\cdot F^2=-4, & &E_1^2\cdot F=0.
\end{align*}
and relations \cite{KP23}
\begin{align*}
    S\sim H_1\sim 3H_2-E_2-F & &  H\sim 4H_1-E_1\sim 4H_2-E_2 && H-F\sim H_1+H_2. 
\end{align*}
Define, for $u\geq 0$, the divisor
\begin{eqnarray*}
    D_u= \pi^*(-K_X)-uS\sim (1-u) H+ u(H-S) \sim (1-u)H+ u(H_2+F)=\\
    = (1-u)H+ u\big(\frac{H+E_2}{4}+F\big),
\end{eqnarray*}
then $D_u$ is pseudo-effective for $u \leq 4/3$, and has a Zariski decomposition with nef positive part
\[ D_u= P(u)+N(u) \mbox{, where }  \begin{cases} P(u)=\big(1-\frac{3u}{4}\big) H,\\ N(u)= u(\frac{E_2}{4}+F).\end{cases}\]and we compute:
\begin{align*}
   S_{X}(S)&=\frac{1}{(-K_X)^3}\int_{0}^{\tau}\mathrm{vol}(\pi^*(-K_X)-uS)du= \frac{1}{22}\int_{0}^{4/3}\frac{11(4 - 3u)^3}{32}du=\frac{1}{3}.
\end{align*}

We now show that $S\big(W^S_{\bullet,\bullet};Z\big)<1$ in order to apply Theorem~\ref{corollary:Kento-formula-Fano-threefold-surface-curve}. 

The surface $S$ is a cubic surface obtained by blowing up a general hyperplane $\DP^2$ at 6 points $\{ b_0, \cdots , b_5\}$ in general position. Let $\ell$ be the pullback of the generator of $\Pic (\DP^2)$, and $e_0, \cdots, e_5$ the exceptional curves. The Mori cone $\NEb(S)$ is generated by $e_0, \cdots , e_5$, by the proper transforms $l_{i,j}= \ell-e_i-e_j$ of lines through two of the blownup points, and by the proper transforms $q_i= 2l-\sum e_j+e_i$ of the conics through any 5 of the blownup points $\{b_0, \cdots, b_5\}$. 

We want to evaluate \begin{multline*}
    S\big(W^S_{\bullet,\bullet};Z\big)=\frac{3}{(-K_X)^3}\int_0^\tau\big(P(u)^{2}\cdot S\big)\cdot ord_{Z}\Big(N(u)\big\vert_{S}\Big)du+\\+\frac{3}{(-K_X)^3}\int_0^\tau\int_0^\infty \mathrm{vol}\big(P(u)\big\vert_{S}-vZ\big)dvdu. 
\end{multline*}

Since there are no $G$-fixed points, $Z\subset S$ is one of $\ell$ or the lines $l_{i,j}$. Recall that $E_2\sim 8H_1-3E_1-4F$, and restricting to $S$ gives $E_2\big\vert_S\sim 8\ell- 3(e_1+ \cdots + e_5)-4e_0$. From the description of $\NEb(S)$,   
\[
\ord_{Z}\big (E_2\big\vert_{S}\big)\le 2\] and
$$\ord_{Z}\Big(N(u)\big\vert_{S}\Big)\le
\begin{cases}
    2\cdot \frac{1}{4}= \frac{1}{2}\text{ if }Z\subset E_2,\\
    0 \text{ otherwise.} 
\end{cases}$$

The first term of the expression $S\big(W^S_{\bullet,\bullet};Z\big)$ is bounded by
$$\frac{3}{(-K_X)^3}\int_0^\tau\big(P(u)^{2}\cdot S\big)\cdot\ord_{Z}\Big(N(u)\big\vert_{S}\Big)du\le \frac{3}{22}\int_0^{4/3}  \frac{11(4 - 3u )^2}{16}\cdot \frac{1}{2} du = \frac{1}{3}.$$

\noindent {\bf Case 1:  $Z\cap \Gamma_1= \emptyset$.} In this case, $Z\sim \ell$, and the Zariski decomposition of $$(-\pi^*(K_X)-uS)|_S-vZ= P(u,v)+N(u,v);$$ for $u\in\big[0,\frac{4}{3}\big]$ is given by 
\begin{align*}
     && N(u,v)=
    \begin{cases}
        0& \mbox{ for } 0 \le v\le \frac{3(4-3u)}{8},\\
        \frac{8v-3(4-3u)}{4}q_0 & \mbox{ for }\frac{3(4-3u)}{8}\le v \le \frac{4-3u}{2}.
    \end{cases}
\end{align*}
and we compute:
\begin{align*}
    &&P(u,v)^2=
    \begin{cases}
         v^2 - 8v + 6uv + \frac{99u^2}{16}  - \frac{33u}{2} +11& \mbox{ for } 0 \le v\le \frac{3(4-3u)}{8},\\
        \frac{5(2v - (4-3u))^2}{4}& \mbox{ for }\frac{3(4-3u)}{8}\le v \le \frac{4-3u}{2}.
    \end{cases}
\end{align*}
This yields:
{\allowdisplaybreaks\begin{align*}
    S\big(W^S_{\bullet,\bullet};Z\big)&\le \frac{1}{3}+\frac{3}{(-K_X)^3}\int_0^\tau\int_0^\infty \mathrm{vol}\big(P(u)\big\vert_{S}-vZ\big)dvdu\le \\
    &\le \frac{1}{3}+\frac{3}{22}\int_0^{4/3}\Bigg( \int_0^{\frac{3(4-3u)}{8}} v^2 - 8v + 6uv + \frac{99u^2}{16}  - \frac{33u}{2} +11 dv +\\ &\text{ }\text{ }\text{ }\text{ }+\int_{\frac{3(4-3u)}{8}}^{\frac{4-3u}{2}} \frac{5(2v - (4-3u))^2}{4} dv\Bigg) du\le  \frac{1}{3}+\frac{53}{132} =\frac{97}{132}<1,
\end{align*}}
which is what we wanted. 

\noindent {\bf Case 2:  $Z\cap \Gamma_1\neq \emptyset$.} As $Z$ is one of the bisecant lines of $\Gamma_1$,   $Z\sim l_{i,j}= \ell-e_i-e_j$ for some $1\leq i<j\leq 5$. We may assume that $Z\sim l_{1,2}$. 
Write the Zariski decomposition of $(-\pi^*(K_X)-uS)|_S-vZ$ for $0 \le u\le \frac{4}{3}$ we have:
\begin{align*}
     &&N(u,v)=
    \begin{cases}
        0&\mbox{ for }0 \le v\le \frac{3(4-3u)}{4},\\
        \frac{4v-3(4-3u)}{4}(e_1+e_2)&\mbox{ for }\frac{3(4-3u)}{4}\le v\le \frac{4-3u}{2},\\
        \frac{4v-3(4-3u)}{4}(e_1+e_2)+\frac{2v-3(4-3u)}{2}(\ell_{34}+\ell_{35}+\ell_{45})&\mbox{for } \frac{4-3u}{2}\le v\le  \frac{5(4-3u)}{8}.
    \end{cases}
\end{align*}
This time, we compute:
{\allowdisplaybreaks\begin{align*}
    S\big(W^S_{\bullet,\bullet};Z\big)&\le \frac{1}{3}+\frac{3}{(-K_X)^3}\int_0^\tau\int_0^\infty \mathrm{vol}\big(P(u)\big\vert_{S}-vZ\big)dvdu\le \\
    &\le \frac{1}{3}+\frac{3}{22}\int_0^{4/3}\Bigg( \int_0^{\frac{4-3u}{4}} - v^2 - 4v + 3uv+  \frac{99u^2}{16}- \frac{33u}{2} +11 dv +\\ 
    &\text{ }\text{ }\text{ }\text{ }+\int_{\frac{4-3u}{4}}^{\frac{4-3u}{2}} v^2 - 8v + 6uv + \frac{117u^2}{16}- \frac{39u}{2} +13 dv +\\ 
    &\text{ }\text{ }\text{ }\text{ }+\int_{\frac{4-3u}{2}}^{\frac{5(4-3u)}{8}} \frac{( 8v - 5(4-3u))^2}{16} dv \Bigg) du\le  \frac{1}{3}+\frac{23}{44} =\frac{113}{132}<1.
\end{align*}}
and this finishes the proof in this case.
\end{proof}

\begin{lemma}\label{dim1b}
    Let $\Xi$ be a prime divisor over $X$ with $c_{\DP^3}(\Xi)= L_1$ then $\beta(\Xi)>0$. 
\end{lemma}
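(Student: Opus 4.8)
```latex
\textbf{Proof proposal.}
The plan is to adapt the strategy of Lemma~\ref{dim1a} to the quadrisecant line $\ell_1 = L_1$, which is the one $G$-invariant line excluded from the previous analysis because it meets $\Gamma_1$ in four points rather than two (or zero). As before, I would seek an irreducible normal surface $S\subset\widehat X$ containing the (proper transform of the) curve $Z = c_X(\Xi)$ lying over $\ell_1$, and then apply Theorem~\ref{corollary:Kento-formula-Fano-threefold-surface-curve} to bound $A_X(\Xi)/S_X(\Xi)$ below by $\min\{1/S_X(S),\, 1/S(W^S_{\bullet,\bullet};Z)\}$, showing each of $S_X(S)$ and $S(W^S_{\bullet,\bullet};Z)$ is strictly less than $1$ so that $\beta(\Xi) = A_X(\Xi) - S_X(\Xi) > 0$.

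First I would fix the surface: taking $S$ to be the pullback to $\widehat X$ of a general cubic $T_1$ through $\Gamma_1$ (so $S\sim T\sim H_2$), or of a general hyperplane containing $\ell_1$, whichever makes $Z$ a nice curve class on $S$ with a tractable Zariski decomposition. The key geometric input is that $\ell_1$ is the \emph{quadrisecant} $L_1$, which is exactly the flopping curve; its proper transform is contracted by $\pi_1$ to the node $x_0$. This distinguishes the computation from Lemma~\ref{dim1a}: the incidence $Z\cap\Gamma_1$ now consists of four points, so on the blown-up surface $Z$ will have class $\ell - e_{i_1}-e_{i_2}-e_{i_3}-e_{i_4}$ (after identifying the relevant exceptional curves via Claim~\ref{cla3}), and the pseudoeffective threshold $\tau$ for $P(u)|_S - vZ$ will differ accordingly. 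I would then compute the Zariski decomposition $P(u,v)+N(u,v)$ in ranges of $v$, determined by which curves in $\NEb(S)$ become negative against $P(u)|_S - vZ$ as $v$ grows, exactly as in the two cases of Lemma~\ref{dim1a}.

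The main obstacle will be controlling the negative part $N(u)|_S$ and the order $\ord_Z(N(u)|_S)$ along $\ell_1$: because $L_1$ is the flopping curve and passes through the node, the divisors $F$ and $E_2$ interact nontrivially with $Z$ along this special line, and the first integral term of $S(W^S_{\bullet,\bullet};Z)$ may no longer vanish or be bounded by the crude $\le 2\cdot\tfrac14$ estimate used before. I would need to determine $\ord_Z(F|_S)$ and $\ord_Z(E_2|_S)$ precisely using the intersection numbers $S\cdot F^2 = -1$, $E_1\cdot F^2=-4$, $F^3 = 2$ recorded above, and the relation $H_1+H_2\sim\tfrac{E_1+E_2}{2}+F$. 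Once the decomposition is pinned down on each $v$-interval, the remaining work is the routine evaluation of the double integrals $\int_0^\tau\!\int_0^\infty \mathrm{vol}(P(u)|_S - vZ)\,dv\,du$ and the first integral, checking that their sum stays below $1$; I expect the numerical margin here to be the genuinely delicate point, since $\ell_1$ is the most special line and the bound may be tight.
```
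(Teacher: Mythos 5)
There is a genuine gap, and it occurs at the very first step: you misidentify the centre of $\Xi$ on $X$. The quadrisecant $\ell_1=L_1$ is precisely the flopping curve of the link: its proper transform on $X_1$ is contracted by $\pi_1$ to the node $x_0$. Hence a divisor $\Xi$ over $X$ with $c_{\DP^3}(\Xi)=L_1$ has $c_X(\Xi)=\{x_0\}$, a \emph{point} --- there is no ``curve $Z=c_X(\Xi)$ lying over $\ell_1$'' on $X$ at all. Theorem~\ref{corollary:Kento-formula-Fano-threefold-surface-curve} requires the centre of $\Xi$ to be a curve contained in the chosen surface, so the template of Lemma~\ref{dim1a} cannot be run on $X$ with a hyperplane (or cubic) pullback: your proposed class $\ell-e_{i_1}-e_{i_2}-e_{i_3}-e_{i_4}$ is the class of one particular curve on one particular surface in a birational model, but the centres of the divisors you must control need not lie on that surface. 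On $\widehat X$ the subvarieties dominating $L_1$ are the exceptional divisor $F\simeq \DP^1\times\DP^1$ of $\pi$ and the curves inside it; a general hyperplane through $L_1$ cuts out only one section of $F\to L_1$, so your flag sees only those $\Xi$ whose centre happens to be that single curve. You also never treat $\Xi=F$ itself, which is the most natural divisor with this centre and numerically the tightest case.

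The paper's proof is organised around exactly this point. It passes to $\omega\colon\widetilde X\to\widehat X$, the blowup of the $G$-orbit consisting of the four flopping lines through $x_0$ (needed both for $G$-equivariance and so that the positive parts of the relevant Zariski decompositions are nef), and takes the surface in the flag to be $\widetilde F$, the pullback of $F$, which is a del Pezzo surface of degree $4$ (namely $F$ blown up at the four points where those lines meet it). It first computes $S_X(\widetilde F)=\tfrac{83}{44}<2=A_X(\widetilde F)$, disposing of $\Xi=F$; then, for $\Xi\neq F$ with one-dimensional centre $Z\subset\widetilde F$, it applies the flag $Z\subset\widetilde F\subset\widetilde X$, uses the absence of $G$-fixed points to force the class of $Z$ to dominate some $\ell_{i,j,k}=\ell_1+\ell_2-e_i-e_j-e_k$, and by convexity of the volume reduces to $Z\sim\ell_{123}$, where it computes $S\big(W^{\widetilde F}_{\bullet,\bullet};Z\big)=\tfrac{29}{44}<1$. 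None of these steps can be recovered from your setup: the essential missing idea is that this lemma is about the node --- equivalently about $F$ and the curves on it --- and not about a line of $\DP^3$ that survives as a curve on $X$.
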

\begin{proof}

  By \cite{KP23}, there are precisely 4 lines through $x_0\in X$. Since $G$ fixes $x_0$ and $G$ sends lines to lines, $G$ sends a line through $x_0$ to a line through $x_0$.  If $L\ni x_0$ is a line and $\widehat{L}$ is its proper transform on $\widehat X$, $-K_{\widehat X}\cdot \widehat L=0$ and $\widehat L$ is a flopping curve.  Let $\omega\colon \widetilde{X}\to \widehat X$ be the blowup of the proper transforms of the 4 lines through $x_0\in X$, and denote by $\Lambda= \Lambda_1+ \Lambda_2+\Lambda_3+ \Lambda_4$ its ($G$-invariant) exceptional divisor. Denote by $\widetilde F= \omega^* F$, $\widetilde E_1= \omega^* E_1-\Lambda$ the proper transforms of $F$ and $E_1$ on $\widetilde{X}$. On $\widetilde X$, we have the intersection numbers: \begin{align*}
  &\Lambda^3=8, & & \Lambda^2\cdot \omega^* F = -4, & & \Lambda\cdot \omega^*(F)^2 = 0, & & \Lambda \cdot \omega^*(F)\cdot \omega^*(E_1) =0 ,\\
    &  \Lambda^2 \cdot  \omega^*(E_1) = 4, & & \Lambda \cdot  \omega^*(E_1)^2 =0, & & \omega^*F^3= 2,&& \omega^*E_1^3= -18.\\
    &      & &
\end{align*}

We first show that 
the Zariski decomposition of $\omega^*\pi^* (-K_X)- u\widetilde F$ exists and writes $P(u)+N(u)$, where $P(u)$ is nef and 
\begin{align*}
   &&N(u)=
    \begin{cases}
   0&\mbox{ for }0\le u \le 1,\\
    (u-1)\Lambda& \mbox{ for }1\le u \le 3.
    \end{cases}
\end{align*}
We have $A_X(\widetilde F)=2$ and compute
\begin{align} \label{eq:EL}
   S_X(\widetilde F)&=\frac{1}{(-K_X)^3}\int_{0}^{\tau}\mathrm{vol}(\omega^*\pi^*(-K_X)-u\widetilde F) du=\\
   &=\frac{1}{22}\Bigg(\int_{0}^{1} 22-2u^3 du + \int_{1}^{3} 2(u - 3)(u^2 - 3u - 3) du \Bigg)=\frac{83}{44}.
\end{align}
So that $\beta(\widetilde F)> 0$. 

 Now we assume that the centre of $\Xi$ over $\widetilde X$ is one-dimensional, so that $Z= c_{\widehat X}(\Xi)\subset \widetilde F$ is an irreducible curve. The surface $\widetilde F$ is the blowup of $F\simeq \DP^1\times \DP^1$ at 4 points, so it is a del Pezzo surface of degree 4. Let $\ell_1$ and $\ell_2$ be the pullbacks to $\widetilde F$ of the two rulings and $e_1, e_2,e_3,e_4$ the exceptional divisors. Then $\NEb(\widetilde F)$ is generated by the proper transforms of rulings through one of the blownup points ($\ell_{1,i}=\ell_1 -e_i$ or $\ell_{2,i}=\ell_2-e_i$) 
and by the proper transforms of $(1,1)$ curves on $F$ through 3 blownup points $\ell_{i,j,k}=\ell_1+\ell_2-e_i-e_j-e_k$. 
We use Theorem~\ref{corollary:Kento-formula-Fano-threefold-surface-curve} to find a lower bound for $\beta(\Xi)$. 
We have
\begin{multline*}
S\big(W^{\widetilde F}_{\bullet,\bullet};Z\big)=\frac{3}{(-K_X)^3}\int_0^{3}\big(P(u)^{2}\cdot \widetilde F\big)\cdot\ord_{Z}\Big(N(u)\big\vert_{\widetilde F}\Big)du+\\+\frac{3}{(-K_X)^3}\int_0^{3}\int_0^\infty \mathrm{vol}\big(P(u)\big\vert_{\widetilde F}-vZ\big)dvdu.
\end{multline*}
Since $Z\not \in \{e_1,e_2,e_3,e_4\}= \Lambda_{\widetilde F}$, $\ord_{Z}\big(N(u)\big\vert_{\widetilde F}\big)=0$. By construction, we may write
$$Z\sim \alpha_1e_1 +\alpha_2e_2+\alpha_3e_3+\alpha_3e_4+ \sum_{\substack{i\in \{1,2,3,4\},\\ j\in \{1,2\}}} \alpha_{ij}\ell_{i(j)}+ \alpha_{123}\ell_{123}+ \alpha_{124}\ell_{124}+ \alpha_{134}\ell_{134}+ \alpha_{234}\ell_{234}.$$
Since there is no $G$-fixed point in $\DP^3$ on either side of the link, one of $\alpha_{123}$, $\alpha_{124}$, $\alpha_{134}$, or $\alpha_{234}$ is greater than $1$. Without loss of generality we assume that $\alpha_{123}\ge 1$; by convexity of volume we get the inequality 
$$
S\big(W_{\bullet,\bullet}^{\widetilde F};Z\big)=\frac{3}{22}\int_{0}^{3}\int_{0}^{\infty}\mathrm{vol}\Big(P(u)\big\vert_{\widetilde F}-vZ\Big)dvdu\leqslant \frac{3}{22}\int_{0}^{3}\int_{0}^{\infty}\mathrm{vol}\Big(P(u)\big\vert_{\widetilde F}-v\ell_{123}\Big)dvdu.
$$
so it is enough to show that the last integral is less than $1$ to conclude.
\par We now assume $Z\sim \ell_{123}$, and denote by $P(u,v)$ and $N(u,v)$ the positive and negative parts of the Zariski decomposition of  $(\omega^*\pi^*(-K_X)-u\widetilde F)|_{\widetilde F}-vZ$. Then
\begin{itemize}
    \item if $u\in [0,1]$ then for $0\le v\le u$
       $N(u,v)=v(e_1 + e_2 + e_3)$ so that $
        P(u,v)^2=2(u - v)^2$.
    \item if $u\in [1,2]$ then:
    {\allowdisplaybreaks\begin{align*}
        &N(u,v)=
        \begin{cases}
        0&\mbox{ for } 0\le v\le u-1,\\
        (-u + v + 1)(e_1 + e_2 + e_3)&\mbox{ for } u-1\le v\le 1 ,\\
        (-u + v + 1)(e_1 + e_2 + e_3)+(v-1)(\ell_{1,4}+\ell_{2,4})&\mbox{ for } 1\le v \le \frac{u+1}{2}.
        \end{cases}
    \end{align*}}
    \item if $u\in [2,3]$ then:
     {\allowdisplaybreaks\begin{align*}
        &N(u,v)=
        \begin{cases}
        0&\mbox{ for } 0\le v \le 1,\\
        (v-1)(\ell_{1,4}+\ell_{2,4})&\mbox{ for } 1\le v\le u-1,\\
        (-u + v + 1)(e_1 + e_2 + e_3)+(v-1)(\ell_{1,4}+\ell_{2,4})&\mbox{ for } u-1 \le v \le \frac{u+1}{2}.
        \end{cases}
    \end{align*}}
\end{itemize}
We have
{\allowdisplaybreaks\begin{align*}
    S\big(&W_{\bullet,\bullet}^{\widetilde F};Z\big)\leqslant \frac{3}{22}\int_{0}^{3}\int_{0}^{\infty}\mathrm{vol}\Big(P(u)\big\vert_{\widetilde F}-v\ell_{123}\Big)dvdu=\\
    &=\frac{3}{22}\Bigg(\int_0^{1}\int_0^u 2(u - v)^2 dvdu + \\
    &\text{ }\text{ }\text{ }\text{ }\text{ }\text{ }+\int_1^{2}\Big(\int_0^{u-1}  (- 2u^2 + 2uv - v^2 + 8u - 6v - 4)dv + \int_{u-1}^1  (u^2 - 4uv + 2v^2 + 2u - 1) dv +\\
    &\text{ }\text{ }\text{ }\text{ }\text{ }\text{ }\text{ }\text{ }\text{ }\text{ }\text{ }\text{ }+ \int_{1}^{\frac{u+1}{2}}   (1 + u - 2v)^2 dv \Big)du+\\
    &\text{ }\text{ }\text{ }\text{ }\text{ }\text{ }+\int_2^{3}\Big(\int_0^{1} (- 2u^2 + 2uv - v^2 + 8u - 6v - 4)dv + \int_{1}^{u-1}  (-2u^2 + 2uv + v^2 + 8u - 10v - 2)dv +\\
    &\text{ }\text{ }\text{ }\text{ }\text{ }\text{ }\text{ }\text{ }\text{ }\text{ }\text{ }\text{ }+ \int_{u-1}^{\frac{u+1}{2}}   (1 + u - 2v)^2 dv\Big)du\Bigg)=\frac{29}{44}<1.
\end{align*}}
We see that $S_X(\widetilde F)<2$ and $S\big(W^{\widetilde F}_{\bullet,\bullet};Z\big)<1$ thus
\begin{equation*}
\frac{A_X(\Xi)}{S_X(\Xi)}\geqslant\min\Bigg\{\frac{2}{S_X(\widetilde F)},\frac{1}{S\big(W^{\widetilde F}_{\bullet,\bullet};Z\big)}\Bigg\}>1.
\end{equation*}
and $\beta(\Xi)=A_X(\Xi)-S_X(\Xi)> 0$.
\end{proof}
Finally, we exclude the case where $Z\subset \Gamma_1$. 
\begin{lemma}

\label{lemma:I-EC}
If the center $Z= c_{X_1}(\Xi)$ is one-dimensional and is contained in $E_1$, $\beta(\Xi)>0$. 
\end{lemma}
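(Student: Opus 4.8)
The plan is to run the same Abban--Zhuang argument as in Lemmas~\ref{dim1a} and \ref{dim1b}, but now with the flag given by the \emph{exceptional divisor itself}: take $S=E_1\subset X_1$. Since $X_1$ is a smooth weak Fano threefold (hence a Mori dream space) and $\pi_1$ is small and crepant, we have $-K_{X_1}=\pi_1^*(-K_X)$, so that $A_X(\Xi)=A_{X_1}(\Xi)$, $S_X(\Xi)=S_{X_1}(\Xi)$, and the formula of Theorem~\ref{corollary:Kento-formula-Fano-threefold-surface-curve} applies on $X_1$ via its Mori dream space version (cf.\ the Remark following it). Because $E_1$ is not $\pi_1$-exceptional, it is a genuine divisor on $X$ and $A_X(E_1)=1$. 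Thus it suffices to establish
\[
\frac{A_X(\Xi)}{S_X(\Xi)}\geq\min\Bigg\{\frac{1}{S_X(E_1)},\frac{1}{S\big(W^{E_1}_{\bullet,\bullet};Z\big)}\Bigg\}>1,
\]
i.e.\ that both $S_X(E_1)<1$ and $S\big(W^{E_1}_{\bullet,\bullet};Z\big)<1$.

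To compute $S_X(E_1)$ I would integrate $\mathrm{vol}\big(\pi_1^*(-K_X)-uE_1\big)=\mathrm{vol}\big(4H_1-(1+u)E_1\big)$. Here lies the first subtlety: for $u>0$ this class leaves the nef cone of $X_1$ immediately (it crosses the flopping wall spanned by $H=4H_1-E_1$), so its volume is \emph{not} its formal cube on $X_1$ and one must pass through the flop. Writing the class in the nef basis of $X_2$ gives $4H_1-(1+u)E_1\sim (1-3u)H+4u\,H_2$, which is nef on $X_2$ for $u\in[0,\tfrac13]$ (so $N(u)=0$), and $\sim(4-8u)H_2+(3u-1)E_2$ for $u\in[\tfrac13,\tfrac12]$, where the $f_2$-contraction forces the Zariski decomposition
\[
P(u)=(4-8u)H_2,\qquad N(u)=(3u-1)E_2,
\]
with pseudoeffective threshold $\tau=\tfrac12$. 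Using the intersection numbers on $X_2$ ($H^3=22$, $H^2H_2=11$, $HH_2^2=4$, $H_2^3=1$, $H_2E_2^2=-5$, $E_2^3=-18$) to evaluate $P(u)^3$ on each chamber and integrating yields $S_X(E_1)<1$ (in fact a direct computation gives $S_X(E_1)=\tfrac{71}{396}$), so the first term of the minimum is comfortably $>1$.

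For the second term I would classify the admissible centres. The surface $E_1\to\Gamma_1\cong\DP^1$ is a Hirzebruch surface, and $G$ acts on the base $\Gamma_1$ through the two involutions of $\DP^1$, which have no common fixed point. Hence an irreducible $G$-invariant curve $Z\subset E_1$ cannot be a single fibre, so $Z$ dominates $\Gamma_1$; bounding $f\cdot Z$ and the section degree, and using convexity of $v\mapsto\mathrm{vol}\big(P(u)\big\vert_{E_1}-vZ\big)$, one reduces to the extremal classes (the minimal sections/multisections). For these I would restrict the positive parts $P(u)$ found above to $E_1$, perform the refined Zariski decomposition of $P(u)\big\vert_{E_1}-vZ$ on the surface, and integrate. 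The first term $\tfrac{3}{22}\int_0^{1/2}\big(P(u)^2\cdot E_1\big)\,\ord_Z\big(N(u)\vert_{E_1}\big)\,du$ vanishes unless $Z\subseteq E_1\cap E_2$, in which case it is controlled by $\ord_Z\big(E_2\vert_{E_1}\big)$ on the range $u\in[\tfrac13,\tfrac12]$; since $E_1\cap E_2$ is the trisection cut out by the transforms of the trisecants of $\Gamma_1$, this order is bounded by $3$. Given how small $S_X(E_1)$ is, these integrals stay below $1$, giving $S\big(W^{E_1}_{\bullet,\bullet};Z\big)<1$ and hence $\beta(\Xi)>0$.

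The main obstacle is the refined computation on the ruled surface $E_1$: correctly restricting $P(u)\vert_{E_1}$ and carrying out the double Zariski decomposition while (i) not falling into the trap that $4H_1-(1+u)E_1$ is non-nef on $X_1$ and must be evaluated across the flop and the $f_2$-blowdown, and (ii) keeping track of the extra $\ord_Z\big(N(u)\vert_{E_1}\big)$ contribution precisely for the special centre $Z\subseteq E_1\cap E_2$, which is the only curve on which that term is active. The enumeration of $G$-invariant sections and the convexity reduction to a single worst-case class are the points requiring the most care.
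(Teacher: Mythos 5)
Your reduction of the problem and your threefold-level computation are fine: passing to $X_1$ (crepant, small), noting $A_X(E_1)=1$, tracking the class $4H_1-(1+u)E_1$ across the flop as $(1-3u)H+4uH_2$ and then $(4-8u)H_2+(3u-1)E_2$, getting $\tau=\tfrac12$, and the value $S_X(E_1)=\tfrac{71}{396}$ all check out. But the proof has a genuine gap exactly where the lemma's content lies: you never establish $S\big(W^{E_1}_{\bullet,\bullet};Z\big)<1$ for a single curve class. The classification of $G$-invariant curves on the ruled surface $E_1$ (which requires knowing the normal bundle of the specific curve $C_{(1,-4)}$, hence which Hirzebruch surface $E_1$ is and what its cone of curves looks like), the convexity reduction to extremal classes, the restricted Zariski decompositions $P(u,v)+N(u,v)$, and the integrals are all deferred, and the concluding inference ``given how small $S_X(E_1)$ is, these integrals stay below $1$'' is not a valid argument: $S_X(E_1)$ and $S\big(W^{E_1}_{\bullet,\bullet};Z\big)$ are integrals of different quantities (threefold volumes versus volumes of $P(u)\big\vert_{E_1}-vZ$ on the surface), and nothing bounds the latter by the former; compare Lemma~\ref{dim1a}, where $S_X(S)=\tfrac13$ yet the curve invariant comes out as large as $\tfrac{113}{132}$.

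There is also a structural obstacle that you flag but do not resolve, and which would force a reorganization of your argument. Theorem~\ref{corollary:Kento-formula-Fano-threefold-surface-curve} (and its Mori dream space version) needs the flag surface to be an irreducible \emph{normal} surface on a model where $P(u)$ is the nef positive part of an actual Zariski decomposition. For $u\in[\tfrac13,\tfrac12]$ your $P(u)=(4-8u)H_2$ is nef only on $X_2$; but $E_1$ meets the flopping curve $L_1$ in the four points over $\Gamma_1\cap L_1$, so its transform on $X_2$ is \emph{non-normal} (four branches glued along the flopped curve), and on $X_1$ the positive part is merely movable, not nef. You would have to work on a resolution such as $\widetilde X$, with flag surface the blowup of $E_1$ at those four points, and redo the whole computation there --- none of which is set up. The paper's own proof of this lemma avoids every one of these issues: since $G$ has no fixed point on $\DP^3$, the centre $c_{\DP^3}(\Xi)$ is all of $\Gamma_1$; the pencil of planes through the quadrisecant $L_1$ induces a fibration $\eta\colon\widetilde X\to\DP^1$ whose general fiber $T$ contains the fibers of $E_1$ over the five points of intersection of the plane with $\Gamma_1$, so the centre $\widetilde Z$ satisfies $T\cdot\widetilde Z\geqslant 5>1$; Lemma~\ref{lemma144} together with Lemma~\ref{CorollaryA15} then gives $\beta(\Xi)>0$ with no Zariski decompositions or surface integrals at all. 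If you want a complete write-up along your lines, the honest assessment is that the hard part is still ahead of you; the paper's fibration argument is the short road.
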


\begin{proof}
Assume that $Z\subset E_1$, then since there is no $G$-fixed point on $\DP^3$, $\phi_1(Z)= c_{\DP^3}(\Xi)$ is the curve $\Gamma_1$. 
Denote by $Y_1\to \DP^3$ the blowup of the line $L_1$ and by $Y_1\to \DP^1$ the morphism induced by the projection $\DP^3 \dashrightarrow \DP^1$ away from $L_1$. Let $\widehat X^+\to Y_1$ be the blowup of the proper transform of $\Gamma_1$, then $\widehat X^+\dashrightarrow \widehat X$ is a flop, and there is a morphism $\widetilde X\to \widehat X$. Denote by $\eta$ the composition $\widetilde X\to \widehat X\to Y_1\to \DP^1$ and by $\widetilde Z$ the centre $c_{\widetilde X}(\Xi)$. 
If $T$ is a~general fiber of $\eta$, $T\cdot \widetilde Z\geqslant 5$, hence, by Lemma~\ref{CorollaryA15}, $\beta(\Xi)>0$.
\end{proof}

\begin{lemma}\label{I:dim2}
    There is no $G$-invariant prime divisor $\Xi$ over $X$ with centre a prime divisor $D_X= c_X(\Xi)$ such that $\beta(\Xi)\le 0$. 
\end{lemma}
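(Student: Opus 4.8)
The plan is to argue by contradiction, using the non-klt consequence of Lemma~\ref{lemma144} recorded in the Remark after it to place $D_X$ low in the effective cone, and then to derive a numerical contradiction from the integrality of Weil divisor classes on the small $\DQ$-factorialization $X_1$.

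First, suppose such a $\Xi$ exists. Its centre $D_X=c_X(\Xi)$ is a $G$-invariant prime divisor and $\beta(\Xi)<0$, so the Remark following Lemma~\ref{lemma144} (applied with $Z=D_X$) furnishes an effective $G$-invariant $\DQ$-divisor $B_X\sim_{\DQ}-\lambda K_X=\lambda H$ with $\lambda<\tfrac34$ such that $D_X\subseteq \Nklt(X,B_X)$. Since $X$ is smooth at the generic point of the divisor $D_X$, failure of klt there forces $\mathrm{mult}_{D_X}(B_X)\geq 1$; hence $D_X$ is a component of $B_X$ of coefficient at least $1$, and $\lambda H-D_X\sim_{\DQ}B_X-D_X$ is $\DQ$-effective on $X$.

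Next I would transfer this to $X_1$. As $\pi_1\colon X_1\to X$ is small it contracts no divisor, so it induces an isomorphism $\Cl(X)\cong\Cl(X_1)=\Pic(X_1)=\DZ H_1\oplus\DZ E_1$ that preserves effectivity and sends the Cartier class $H$ to $H=4H_1-E_1$. Writing $D_1$ for the strict transform of $D_X$, both $D_1$ and $\lambda H-D_1$ then lie in $\Effb(X_1)$. By \cite{KP23} one has $\Effb(X_1)=\langle E_1,E_2\rangle$ with $E_2=8H_1-3E_1$, so a class $aH_1+bE_1$ is pseudo-effective precisely when $a\geq 0$ and $b\geq-\tfrac38 a$.

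Finally I would run the numerics. Every prime divisor on $X_1$ other than $E_1$ dominates a surface in $\DP^3$, hence has class $D_1=dH_1-mE_1$ with $d\geq 1$ its degree and $m\geq 0$ its multiplicity along $\Gamma_1$. Effectivity of $D_1$ gives $m\leq\tfrac{3d}{8}$, while effectivity of $\lambda H-D_1=(4\lambda-d)H_1+(m-\lambda)E_1$ gives $d\leq 4\lambda<3$ and $m\geq\tfrac{3d}{8}-\tfrac{\lambda}{2}$. Thus $d\in\{1,2\}$ and $\tfrac{3d}{8}-\tfrac{\lambda}{2}\leq m\leq\tfrac{3d}{8}$; this interval is contained in $(0,\tfrac38]$ when $d=1$ and in $(\tfrac38,\tfrac34]$ when $d=2$, so it contains no integer. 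The remaining case $D_1=E_1$ gives $\lambda H-E_1=4\lambda H_1-(\lambda+1)E_1$, which is pseudo-effective only if $\lambda\geq 2$. Every case is impossible, so no such $\Xi$ exists.

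The main obstacle is conceptual rather than computational: the crux is to recognise that Lemma~\ref{lemma144} converts a hypothetical destabilising divisor with divisorial centre into the single effectivity relation $\lambda H-D_X\geq 0$ with $\lambda<\tfrac34$, after which the rank-two effective cone of $X_1$ together with integrality does all the work. The only geometric inputs needing verification are the shape of $\Effb(X_1)$ and the fact that $E_1$ is the unique prime divisor of $X_1$ not dominating a surface in $\DP^3$ (so that every other class has nonpositive $E_1$-coefficient); one may also note directly that $\DP^3$ carries no $G$-invariant hyperplane, which rules out the degree-one case independently.
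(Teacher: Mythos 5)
Your proof is correct, and it follows the same skeleton as the paper's: convert $\beta(\Xi)<0$ with divisorial centre into the effectivity relation $\lambda H - D_X \geq 0$ with $\lambda < \tfrac34$ (the paper phrases this reciprocally as $-K_X \sim_{\DQ} \lambda' D_X + \Delta$ with $\lambda' > \tfrac43$), transfer it to $X_1$ through the small map $\pi_1$, and exploit the rank-two cone $\Effb(X_1)=\DR_{\ge 0}[E_1]+\DR_{\ge 0}[8H_1-3E_1]$. Where you genuinely diverge is in the endgame. The paper kills the remaining cases with two geometric inputs: the absence of a $G$-invariant hyperplane in $\DP^3$ rules out $d=1$, and the fact that $\Gamma_1$ lies on no quadric forces $m=0$ when $d=2$, after which $\Delta \sim (4-2\lambda')H_1 - E_1$ fails to be effective. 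You instead observe that the two cone inequalities pin the multiplicity $m$ into $\big(0,\tfrac38\big]$ when $d=1$ and $\big(\tfrac38,\tfrac34\big]$ when $d=2$, intervals containing no integer; I have checked these computations and they are right. This buys two things: your argument needs neither of the paper's geometric facts, and it never uses $G$-invariance of $D_X$ after the initial application of Lemma~\ref{lemma144}, so it actually shows that \emph{no} prime divisor on $X$ (invariant or not) can be the centre of a divisor with $\beta<0$ --- a slightly stronger and more robust conclusion, at the mild cost of invoking integrality of the class of $D_1$ in $\Pic(X_1)=\DZ H_1\oplus \DZ E_1$, which holds since $X_1$ is smooth and $\pi_1$ is small.
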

\begin{proof} 

By \cite[Corollary 1.44]{Fano21}, for any divisor $\Xi$ over $X$, if $\alpha_{G,Z}(X)>3/4$, where $Z=c_X(\Xi)$, then $\beta(\Xi)>0$. Assume now that there is a divisor $\Xi$ over $X$ with $\beta(\Xi)\le 0$ and $c_{X}(\Xi)= D_X$ a divisor, so that $\alpha_{G,D_X}(X)\leq \frac{3}{4}$. First assume that $\alpha_{G,D_X}(X)< \frac{3}{4}$, then $D_X$ is the $G$-orbit of a minimal log canonical centre of a suitable pair $\big(X, \frac{3}{4} \mathcal D)$ for $\mathcal D\subset |-K_X|_{\DQ}$ a $G$-invariant linear system. By \cite[Theorem 1.52]{Fano21}, $D_X$ is a $G$-invariant irreducible normal surface with 
\begin{equation*}\label{eqonX}
    -K_X \sim_{\DQ} \lambda D_X+ \Delta_X
\end{equation*} 
for $\Delta_X$ an effective $\DQ$-divisor and a rational number $\lambda> \frac{4}{3}$.

We show that there is no such divisor $D_X$. Recall that $\pi\colon X_i\to X$ for $i=1,2$ are small $\DQ$-factorialisations so that 
\[ -K_{X_i}\sim _{\DQ} \lambda D_X+ \Delta_X\]
where we still denote by $D_X, \Delta_X$ the pullbacks of these divisors to $X_i$. 
We have $\Effb(X_i)= \DR_{\ge 0}[E_1]+ \DR_{\ge 0}[E_2]$, and $E_2= 8H_1-3E_1$. If $D_X= E_1$, then $\Delta_X\sim 4H_1-(1+\lambda)E_1$, but this is impossible as $(1+\lambda)> 3/2$. If $D_X\neq E_1$, the image of $D_X$ by $f_1$ is a $G$-invariant irreducible surface of degree $d\in \DN$ on $\DP^3$, and since 
\[ f_1(4H_1) \sim \lambda f_1(D_X) + f_1(\Delta_X)\]
we have $4 \geq \lambda d$, and since $\lambda>\frac{4}{3}$, $d\leq 2$. Since there is no $G$-invariant hyperplane of $\DP^3$, $d=2$ and $f_1(D_X)$ is a $G$-invariant quadric. Since $\Gamma_1$ doesn't lie on a quadric, $D_X\sim 2H_1$ and
\[ \Delta_X\sim (4-2\lambda)H_1-E_1 \sim xH_1-E_1\]
for some $x<4/3$, which is impossible for an effective divisor.

Now assume that $\alpha_{G,D_X}(X)= \frac{3}{4}$, then since $\pi_1$ is small, $\alpha_{G,D_X}(X_1)= \frac{3}{4}$ by \cite[Lemma 1.47]{Fano21}, and $\beta(\Xi)= A_{X}(D_X)- S_X(D_X)= A_{X_1}(D_X)-S_{X_1}(D_X)$. Since $X_1$ is smooth, as in the proof of \cite[Theorem 1.51]{Fano21}, assuming that $\beta(\Xi)=0$ would imply $X_1\simeq \DP^3$, a contradiction. We conclude that $\beta(\Xi)>0$ for all $G$-invariant prime divisors $\Xi$ with $c_X(\Xi)=D_X$ a prime divisor on $X$. 
\end{proof}

We now have all the elements to prove
\begin{maintheorem1*}
The threefold $X$ is $K$-polystable.
\end{maintheorem1*}
\begin{proof}
    Assume that $X$ is not $K$-polystable, then there is a $G$-invariant prime divisor $\Xi$ over $X$ such that $\beta(\Xi)<0$. 
    Lemma~\ref{I:dim2} shows that the centre of $\Xi$ on $X$ is not a surface. 
    If the centre of $\Xi$ on $\DP^3$ is a curve other than $\Gamma_1$, by Lemma~\ref{CorollaryA13}, this curve is a line. Lemma~\ref{dim1a} shows that this line cannot be a $G$-invariant line that is not the unique quadrisecant of $\Gamma_1$, while Lemma~\ref{dim1b} excludes the quadrisecant line $L_1$. Lemma~\ref{lemma:I-EC} shows that the centre of $\Xi$ on $\DP^3$ is not $\Gamma_1$. 
    As there is no $G$-fixed point on $\DP^3$, if $c_X(\Xi)$ is $0$-dimensional, it is the singular point $x_0\in X$, and its centre on $\DP^3$ is $L_1$, so that this case is also excluded by Lemma~\ref{dim1b}.
\end{proof}
Since $\Aut (X)$ is finite, $X$ is K-stable, and by openness of K-stability \cite{BL22}, this implies: 
\begin{corollary}
  A general one-nodal prime Fano threefold of genus $12$ in Family I is K-stable.   
\end{corollary}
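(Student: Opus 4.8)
The plan is to deduce the corollary from the explicit K-polystable example of Main Theorem~(I) in two steps: upgrade K-polystability to K-stability for our distinguished member, and then spread K-stability to a general member by openness.

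First I would use the specific threefold $X$ built above from $\Gamma_1=C_{(1,-4)}$, which Main Theorem~(I) shows is K-polystable and whose automorphism group was shown to be finite. A K-polystable Fano variety with finite automorphism group is K-stable: the only phenomenon distinguishing polystability from stability is a nontrivial special test configuration whose central fibre is isomorphic to $X$, and any such configuration induces a one-parameter subgroup $\Gm\hookrightarrow\Aut(X)$; finiteness of $\Aut(X)$ rules this out. Hence $X$ is a K-stable member of Family~I.

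Next I would invoke openness of K-stability in families of $\DQ$-Fano varieties. By Theorem~\ref{fams}, the one-nodal prime Fano threefolds of genus $12$ in Family~I vary in an algebraic family of terminal Gorenstein (hence klt) Fano threefolds of fixed anticanonical degree $22$, parametrized by the data of the quintic curve $\Gamma_1\subset\DP^3$. Since K-stability is an open condition on the base of such a family, the K-stable locus is open; it is nonempty because it contains $X$, and therefore a general member of Family~I is K-stable, as claimed.

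The genuinely delicate point is the applicability of the openness statement to this particular setting: one must check that the openness theorem for K-stability covers these singular (one-nodal) Fano threefolds, and that the base of Family~I is irreducible, so that a nonempty open locus is dense and ``general member'' is meaningful. Granting these, the conclusion is immediate; the substantive work was already carried out in Main Theorem~(I).
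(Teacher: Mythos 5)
Your proposal matches the paper's argument exactly: the paper deduces the corollary in one line by noting that since $\Aut(X)$ is finite, the K-polystable member constructed in Main Theorem~(I) is in fact K-stable, and then invoking openness of K-stability to pass to a general member of Family~I. Your additional remarks (the one-parameter-subgroup argument for polystable plus finite automorphisms implying stable, and the need for openness to apply to these nodal Fano threefolds over an irreducible base) merely make explicit the steps the paper leaves implicit.
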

\begin{remark}
    Liu and Zhao have constructed a K-semistable degeneration of one-nodal prime Fano threefolds in Family I, in which the curve $\Gamma_1$ is taken to lie on a quadric (this corresponds to $C_{a,b}$ with $|a|=|b|$ above). The resulting prime Fano threefold of genus $12$ has (non-isolated) canonical singularities \cite{LiuZhao}. 
\end{remark}

\section{Family II}
Let $X$ be a one-nodal prime Fano threefold of genus $12$ that belongs to Family II of Theorem~\ref{fams}, then $X$ is the midpoint of a Sarkisov link associated 
to a rational map $Q\subset \DP^4\dashrightarrow \DP^2$; we describe the associated birational geometry briefly, see \cite{JRP11, Pro16} and \cite{KP23} for proofs and precise statements. 

\begin{figure}[h!]
    \centering
    \begin{tikzcd}&& \widehat X \arrow[dl,  "\sigma_1"'] \arrow[dr,  "\sigma_2"] \arrow[dd,  "\pi"]  &&\\
    & X_1  \arrow[dl,  "f_1"'] \arrow[dr,  "\pi_1"]  && X_2 \arrow[dl,  "\pi_2"'] \arrow[dr,  "f_2"]\\
   Q\subset \DP^4 && X && \DP^2
    \end{tikzcd}
\end{figure}
Denote by $H_1= \sigma_1^*\big( f_1^*\mathcal{O}_{Q}(1)\big)$, by $H_2= \sigma_2^*\big(f_2^* \mathcal O_{\DP^2}(1)\big)$, and by $H= \pi^*(-K_X)$ the pullbacks to $\widehat X$ (or to any of the models) of the ample generators of $\Pic (Q)$, $\Pic (\DP^2)$ and $\Pic (X)$ respectively. The morphism $f_1$ is the blowup of a smooth rational quintic curve $\Gamma_1\subset Q\subset \DP^4$ that does not lie on a hyperplane section of $Q$ ($|H_1-\Gamma_1|=\emptyset$), and there is a unique trisecant line $L_1$ to $\Gamma_1$. 
The curve $\Gamma_1$ lies on a section of $|2H_1|$ on $Q$, that is on a del Pezzo surface of degree $4$, and the linear system $|2H_1-\Gamma_1|$ has dimension $3$ and $\Bs |2H_1-\Gamma_1|= \Gamma_1\cup L_1$. The rational map associated to $|2H_1-\Gamma_1| = |H_2|$ is precisely the $Q\dashrightarrow \DP^2$ induced by the Sarkisov link above. The threefold $X_1$ is weak Fano, 
\begin{equation*}\label{eq2.1}
    -K_{X_1}\sim H\sim 3H_1-E_1
\end{equation*}
where $E_1= \Exc f_1$, 
so that the proper transform of $L_1$ (still denoted $L_1$) is the unique flopping curve on $X_1$. The map $\pi_1$ contracts $L_1$ to a node $\{x_0\}= \operatorname{Sing}(X) \in X$.

Let $\pi\colon \widehat X\to X$ be the blowup of $x_0$, and $\sigma_1$ the induced map to $X_1$, note that $L_1= \sigma_1( F)$, where $ F= \Exc \pi$. The threefolds $X_1$ and $X_2$ are the two small resolutions of the node $x_0$ and  $\chi$ is the induced birational map between these (the Atiyah flop associated to $x_0\in X$).
Then, $\widehat X$ is a weak Fano threefold ($-K_{\widehat X}$ is nef and big) with $\rho=3$ and we have \cite{KP23}:
\begin{equation*}\label{eq2.2}
 -K_{\widehat X}\sim  H-F \sim H_1+H_2   
\end{equation*}
and from $H\sim 3H_1-E_1 \sim H_1+H_2$, we deduce
\begin{equation*}\label{eq2.3}
    H_2\sim 2H_1-E_1.
\end{equation*}
The map $f_2$ is a conic bundle (a Mori fibre space with one-dimensional fibres) and its discriminant curve $\delta= -{f_2}_*(K_{X_2/\DP^2})^2= -{f_2}_*(H-3H_2)^2$
has degree $12$. 

Denoting by $\mathcal E= {\phi_2}_*H$, then $X_2\subset \DP(\mathcal E)$ is a section of $2H-3H_2$, where, abusing notation, we denote by $H$ the tautological class of $\DP(\mathcal{E})$ and by $H_2$ the pullback of $\mathcal O_{\DP^2}(1)$). 

Since $H_2\cdot L_1=1$, $L_1$ maps to a line $\ell\subset \DP^2$. Let $R= f_2^{-1} \ell$ be its preimage on $X_2$, and by abuse of notation, also denote by $R= \sigma_2^*(f_2^{-1}\ell)$ its proper transform on $\widehat X$. By construction, $R$ is the unique section of $|2H_1-E_1-2F|= |H_2-F|$. 

\subsection{Construction of a member with $\DZ_2\rtimes \DZ_3$-action} Let $Q\subset \DP^4$ be the smooth quadric threefold 
\[Q= \{ 2 x_2^2=x_1x_3-x_0x_4\} \]
and let $\Gamma_1$ be the image of the embedding $\DP^1\hookrightarrow \DP^4$ given by 
$$[x:y]\to [x^5: 2x^3y^2+y^5:x^4y+xy^4:2x^2y^3+x^5 :y^5];$$
$\Gamma_1$ lies on $Q$ but on no hyperplane section of $Q$. 

Let $\omega$ be a primitive cube root of unity, and define an action of $G:=\DZ/2\DZ\rtimes\DZ/3\DZ$  on $\DP^4$ by the action of its generators:
\begin{align*}
    &\tau : [x_0:x_1:x_2:x_3:x_4]\to [x_4:x_3:x_2:x_1:x_0],\\
    &\sigma : [x_0:x_1:x_2:x_3:x_4]\to [x_0:\omega^2x_1:\omega x_2:x_3:\omega^2 x_4 ].
\end{align*}
and observe that $\Gamma_1$ is $G$-invariant, and that 
\[ L_1= \{ x_0+x_3= x_4+x_1= x_2=0\}\]
is $G$-invariant and trisecant to $\Gamma_1$ (the intersection $L_1\cap \Gamma_1$ consists of the image of the points $[1:-\omega^i]$ for $i=0,1,2$). 
The threefolds $X,X_1$ and $X_2$ are equipped with a $G$-action and the Sarkisov link above is $G$-equivariant. For instance, a $G$-invariant basis of $|H_2|$ is  
\[ \begin{cases}
    S_1=\{ x_0^2-x_3^2+2(x_1-x_4)=0\},\\
    S_2=\{ x_1^2-x_4^2+2(x_0-x_3)=0\},\\
    S_3=\{(x_0-x_2)^2+(x_4-x_2)^2+x_0x_1 + 2x_0x_4  + x_3x_4= 2x_2^2+(x_1-x_2)^2+(x_3-x_2)^2\}.
\end{cases}\]
and the only section of $|H_2|$ that is singular along $L_1$ is $f_1\big(\sigma_1 R\big)$ ( which we still call $R$ by abuse of notation). We have:
\begin{equation*}
    R= \{ x_0x_1+2x_0x_4+x_3x_4= 2x_2^2\}. 
\end{equation*}

The discriminant curve of $f_2$ is the smooth plane cubic 
\[ \delta=\{ 2y_0^3 + 6y_0^2y_1 + 5y_0y_1^2 + y_0y_1y_2 + 3y_1^3 +5y_1^2y_2 + 6y_1y_2^2 +2y_2^3=0\}\subset \DP^2.\]
\begin{claim}
    The group $\Aut(X)$ is finite.
\end{claim}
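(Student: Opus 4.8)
The plan is to mimic the argument used for Family~I and reduce the finiteness of $\Aut(X)$ to the finiteness of a subgroup of $\Aut(\DP^1)=\PGL_2$ stabilising a distinguished finite set of points on the rational curve $\Gamma_1$.

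First I would identify $\Aut(X)$ with $\Aut(Q,\Gamma_1)$, the group of automorphisms of the quadric threefold $Q\subset\DP^4$ preserving $\Gamma_1$. Every ingredient of the Sarkisov link is canonically attached to $X$: the node $x_0=\Sing(X)$ is the unique singular point and so is fixed by $\Aut(X)$; its two small resolutions $X_1$ and $X_2$ are not isomorphic (the non-flopping contraction of $X_1$ is the divisorial contraction $f_1$ to the threefold $Q$, whereas that of $X_2$ is the conic bundle $f_2$ over $\DP^2$), so $\Aut(X)$ cannot interchange them and lifts to $\Aut(X_1)$. On $X_1$ the contraction $f_1$ is the unique divisorial extremal contraction, hence $\Aut(X_1)$ descends to $\Aut(Q)$ preserving $\Gamma_1=f_1(E_1)$.

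Next I would restrict automorphisms to $\Gamma_1\cong\DP^1$, obtaining a homomorphism $\Aut(Q,\Gamma_1)\to\Aut(\Gamma_1)=\PGL_2$. This map is injective: an element fixing $\Gamma_1$ pointwise is the restriction of a projective transformation of $\DP^4$ whose fixed locus contains the irreducible curve $\Gamma_1$; since $\Gamma_1$ is non-degenerate (it lies on no hyperplane section of $Q$) and the fixed locus of a projective transformation is a union of linear subspaces, the irreducible curve $\Gamma_1$ must be contained in a single fixed linear subspace, forcing that subspace to be all of $\DP^4$ and the transformation to be the identity. Finally, since $L_1$ is the unique trisecant line to $\Gamma_1$, it is preserved by $\Aut(Q,\Gamma_1)$, and hence so is the three-point set $\Gamma_1\cap L_1$ (the images of $[1:-\omega^i]$ for $i=0,1,2$). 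The image of $\Aut(Q,\Gamma_1)$ in $\PGL_2$ therefore lies in the stabiliser of a set of three points of $\DP^1$, which is finite (a subgroup of $S_3$), and so $\Aut(X)$ is finite.

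The only genuinely delicate step is the first one, namely checking that $\Aut(X)$ really descends to $\Aut(Q,\Gamma_1)$; this follows formally once one observes that the node, the two non-isomorphic small resolutions, and the divisorial contraction $f_1$ are all canonically determined by $X$. The remaining steps are the verbatim analogues of the Family~I computation, with the quadrisecant in $\DP^3$ replaced by the trisecant $L_1$ in $Q$ and the span of $\Gamma_1$ taken in $\DP^4$ instead of $\DP^3$.
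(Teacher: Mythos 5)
Your proof is correct and takes essentially the same approach as the paper's one-line argument: identify $\Aut(X)$ with $\Aut(Q,\Gamma_1)$, view it inside $\Aut(\Gamma_1)\cong\Aut(\DP^1)$, and use that it must preserve the three points $\Gamma_1\cap L_1$ on the unique trisecant. Your version simply makes explicit the two steps the paper leaves implicit — the descent through the Sarkisov link (via the non-isomorphic small resolutions) and the injectivity of the restriction map to $\Aut(\Gamma_1)$ — and both of these justifications are sound.
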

\begin{proof}
 Since $\Aut(X)$ is a subgroup of $\Aut(X_1)= \Aut(Q, \Gamma_1)$, and since $\Gamma_1$ does not lie on a hyperplane section of $Q$, $\Aut(Q, \Gamma_1)= \Aut(X_1)$ is a subgroup of $\Aut (\Gamma_1)= \Aut (\DP^1)$ by \cite[Lemma 2.1]{CPS19}. Consequently,  $\Aut(X)$ is a subgroup of $\Aut (\DP^1)$ preserving the three points of intersection $\Gamma_1\cap L_1$, therefore it is finite.  
\end{proof}

The intersection numbers associated to the Sarkisov link are 
\begin{align*}
     &H_1^3=2,& &H_1^2\cdot E_1 = 0, & &H_1\cdot E_1^2=-5, & &E_1^3=-13,\\
    &      & &H_1^2\cdot F = 0, & &H_1\cdot F^2=-1, & &F^3=2,\\
    &      & &E_1\cdot F\cdot H_1=0,& &E_1\cdot F^2=-3, & &E_1^2\cdot F=0,\\
    &H_2^3=0,& &H_2^2\cdot H=2, & &H_2\cdot H^2=12-\deg \delta.& &
\end{align*}

We will apply Theorem~\ref{equivGstab} to prove that $X$ is K-stable. To do so, we first describe possible centres of $G$-invariant divisors over $X$. In what follows, $\Xi$ always denotes a $G$-invariant prime divisor over $X$.

\begin{claim}\label{cla2.1}
If the centre of $\Xi$ on $X$ is $0$-dimensional, it is the singular point $c_X(\Xi)= \{x_0\}$.     
\end{claim}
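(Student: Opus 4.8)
The plan is to reduce, exactly as in the proof of Claim~\ref{cla1} for Family~(I), to a statement about $G$-fixed points on the quadric $Q$. Since $\Xi$ is $G$-invariant, its centre $c_X(\Xi)$ is a $G$-invariant subvariety of $X$; if it is $0$-dimensional it is a single reduced point, hence a $G$-fixed point. The node $x_0$ is $G$-fixed because it is the image under $\pi_1$ of the $G$-invariant line $L_1$. So it suffices to show that $x_0$ is the \emph{only} $G$-fixed point of $X$.

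First I would observe that the small contraction $\pi_1\colon X_1\to X$ is an isomorphism over $X\setminus\{x_0\}$, inducing a $G$-equivariant isomorphism $X\setminus\{x_0\}\cong X_1\setminus L_1$. Composing with the blowup morphism $f_1\colon X_1\to Q$ gives a $G$-equivariant morphism
\[ X\setminus\{x_0\}\longrightarrow Q. \]
Consequently, any $G$-fixed point $q\in X$ with $q\neq x_0$ would map to a $G$-fixed point of $Q$; note that this also covers points lying over $\Gamma_1$, since they still map to points of $\Gamma_1\subset Q$. Thus the claim follows once I show that $Q$ contains no $G$-fixed point.

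The remaining step is a direct fixed-point computation on $\DP^4$. Diagonalising $\sigma$, the $\sigma$-invariant locus is the union of the line $\{x_1=x_2=x_4=0\}$ (eigenvalue $1$), the point $[0:0:1:0:0]$ (eigenvalue $\omega$), and the line $\{x_0=x_2=x_3=0\}$ (eigenvalue $\omega^2$). Imposing $\tau$-invariance on each piece, I expect to find that neither line carries a $\tau$-fixed point, whereas $[0:0:1:0:0]$ is fixed by $\tau$; hence $[0:0:1:0:0]$ is the unique $G$-fixed point of $\DP^4$. Since $2\cdot 1^2\neq 0$, this point does not satisfy the equation of $Q=\{2x_2^2=x_1x_3-x_0x_4\}$, so $Q$ has no $G$-fixed point, which finishes the proof.

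The argument is essentially identical to Family~(I); the only point requiring care is recording that the rational map $X\dashrightarrow Q$ is in fact a morphism on all of $X\setminus\{x_0\}$, so that fixed points genuinely transport to $Q$. The main computation—the short linear-algebra verification that the single $G$-fixed point of $\DP^4$ lies off the quadric—is entirely routine.
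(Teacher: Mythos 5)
Your proposal is correct and takes essentially the same approach as the paper, whose entire proof is the observation that there is no point of $Q\subset \DP^4$ fixed by the action of $G$. Your additional details --- transporting a hypothetical fixed point $q\neq x_0$ to $Q$ via the $G$-equivariant morphism $f_1\circ\pi_1^{-1}$ on $X\setminus\{x_0\}$, and the explicit eigenspace computation showing $[0:0:1:0:0]$ is the unique $G$-fixed point of $\DP^4$ and lies off $Q$ since $2\cdot 1^2\neq 0$ --- are all accurate.
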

    
\begin{proof}
There is no point of $Q\subset \DP^4$ fixed by the action of $G$.
\end{proof}

We now consider the case when the centre $Z=c_{Q}(\Xi)$ on $Q$ is one-dimensional. 
First, we assume that $Z$ lies on a (smooth) section $S$ of the linear system $|H_2|=|2H_1-E_1|$. 
As an intersection of two quadrics in $\DP^4$, $S$ is a del Pezzo of degree $4$, and $p\colon S\to \DP^2$ is the blowup of five points $p_1, \cdots , p_5$ in general position. Let $\ell$ be the pullback of a line on $\DP^2$, and $e_1, \cdots , e_5$ the $p$-exceptional curves. Then the Mori cone $\NEb(S)$ is generated by $\ell, e_1, \cdots , e_5, \ell_{i,j}$ for $1\leq i<j\leq 5$ and $q$ where $\ell_{i,j}$ is the proper transform of the line through $p_i$ and $p_j$ and $q$ that of the conic through $p_1, \cdots, p_5$. For a smooth curve $C\subset S$, if $C\sim k\ell+ \sum  m_ie_i$, then 
\[ \deg C= -K_S\cdot C= H_1\cdot C= 3k-\sum m_i \mbox{ and } p_a(C)= \frac{(k-1)(k-2)}{2}- \sum \frac{m_i(m_i-1)}{2}\]
so that without loss of generality, we may assume that $\Gamma_1= 2\ell-e_1$ and $L_1= q$. 

\begin{lemma}\label{II.1a}
    If $Z= c_{Q}(\Xi)$ is a $G$-invariant irreducible curve lying on $S\in |H_2|$, and if $Z\not \subset \Gamma_1\cup L_1$, then $\beta(\Xi)>0$. 
\end{lemma}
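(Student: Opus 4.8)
The plan is to apply the refined valuative criterion of Theorem~\ref{corollary:Kento-formula-Fano-threefold-surface-curve} (valid on the smooth weak Fano $\widehat X$, in the Mori Dream space formulation of the remark following it) to the flag $(S,Z)$, where $S$ is the proper transform on $\widehat X$ of the degree-$4$ del Pezzo carrying $Z$, so that $S\sim H_2$. Since $|H_2|$ is $G$-invariant and has $G$-invariant members (e.g. $S_1,S_2,S_3$) and $Z$ is $G$-invariant, I take $S$ to be $G$-invariant, so $G$ acts on the del Pezzo $S$ and $[Z]$ is $G$-invariant in $\mathrm{Pic}(S)$. It then suffices to prove $S_X(S)<1$ and $S\big(W^S_{\bullet,\bullet};Z\big)<1$, since these give $A_X(\Xi)/S_X(\Xi)\geq\min\{1/S_X(S),\,1/S(W^S_{\bullet,\bullet};Z)\}>1$, hence $\beta(\Xi)>0$.

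First I would compute the Zariski decomposition of $D_u=\pi^*(-K_X)-uS=H-uH_2$ on $\widehat X$. Using $H=H_1+H_2+F$ together with $H_2=2H_1-E_1-F$ (equivalently $H=3H_1-E_1$) and the intersection table, I expect $P(u)=uH_1+(1-u)H$ and $N(u)=uF$ on $[0,1]$, then $P(u)=(3-2u)H_1$ and $N(u)=uF+(u-1)E_1$ on $[1,\tfrac32]$, with pseudo-effective threshold $\tau=\tfrac32$. Integrating $\mathrm{vol}(D_u)=P(u)^3$ yields $S_X(S)=\tfrac1{22}\int_0^{3/2}\mathrm{vol}(D_u)\,du=\tfrac12<1$. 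The structural point for the second invariant is that both negative parts are supported on $E_1$ and $F$, and these restrict to $E_1|_S=\Gamma_1$ and $F|_S=L_1$ on $S$; since $Z\neq\Gamma_1,L_1$ by hypothesis, $\ord_Z(N(u)|_S)=0$ throughout, the first term of $S(W^S_{\bullet,\bullet};Z)$ vanishes, and $S(W^S_{\bullet,\bullet};Z)=\tfrac3{22}\int_0^{3/2}\int_0^\infty\mathrm{vol}(P(u)|_S-vZ)\,dv\,du$.

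Next I would make $P(u)|_S$ explicit in $\mathrm{Pic}(S)=\langle\ell,e_1,\dots,e_5\rangle$, using $H_1|_S=-K_S=3\ell-\sum e_i$, $E_1|_S=\Gamma_1=2\ell-e_1$ and $F|_S=L_1=2\ell-\sum e_i$; this gives $P(u)|_S=(7-4u)\ell-(2-u)e_1-(3-2u)(e_2+e_3+e_4+e_5)$ on $[0,1]$ and $P(u)|_S=(3-2u)(-K_S)$ on $[1,\tfrac32]$. To control $Z$ I would analyse the induced $G$-action on $\mathrm{Pic}(S)$: invariance of $\Gamma_1$ and $L_1$ forces $\ell$, $e_1$ and $e_2+e_3+e_4+e_5$ to be $G$-invariant, so $G=\mathbb{Z}/2\rtimes\mathbb{Z}/3$ acts on $\{e_2,e_3,e_4,e_5\}$ through $S_3$; this cuts the sixteen $(-1)$-curves down to a handful of orbits. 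Expanding $[Z]$ in the Mori generators and using the absence of $G$-fixed points on $Q$, one of the coefficients must be $\geq 1$, so by monotonicity of volume ($Z-R$ effective implies $\mathrm{vol}(P|_S-vZ)\leq\mathrm{vol}(P|_S-vR)$) it suffices to bound $\tfrac3{22}\int_0^{3/2}\int_0^\infty\mathrm{vol}(P(u)|_S-vR)\,dv\,du$ for one representative line $R$ of each orbit.

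The final step is to evaluate these finitely many integrals: for each representative $R$ I would run the Zariski decomposition of $P(u)|_S-vR$ on the del Pezzo, tracking the walls as $v$ grows (the negative part successively absorbs the $(-1)$-curves meeting $R$), and integrate; as a consistency check $(P(u)|_S)^2=9-4u-u^2$ on $[0,1]$, matching $(H|_S)^2=9$ at $u=0$ and $(-K_S)^2=4$ at $u=1$. I expect each orbit to give a value comfortably below $1$, exactly as in Lemmas~\ref{dim1a} and~\ref{dim1b}. The main obstacle is precisely this case analysis: several orbits of lines, each demanding its own multi-chamber Zariski decomposition on $S$, so the bookkeeping is the real work; the reduction by $G$-symmetry (sixteen lines to a few orbits) and by convexity (a general $Z$ to a single generator) is what keeps it finite.
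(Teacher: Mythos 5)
Your skeleton is the same as the paper's: apply Theorem~\ref{corollary:Kento-formula-Fano-threefold-surface-curve} to the flag $Z\subset S\subset\widehat{X}$, check $S_X(S)<1$, note that $N(u)\vert_S$ is supported on $E_1\vert_S=\Gamma_1$ and $F\vert_S=L_1$ so the order term vanishes, and reduce to extremal classes on the quartic del Pezzo by monotonicity of volume. Where you depart from the paper is the Zariski decomposition of $D_u=H-uS$, and there \emph{you} are correct: writing $D_u=(1-u)H+uH_1+uF$, the divisor $(1-u)H+uH_1$ is semiample, so for $u\in[0,1]$ the negative part is exactly $uF$, and on $[1,\tfrac32]$ it is $uF+(u-1)E_1$ with $P(u)=(3-2u)H_1$; hence $S_X(S)=\tfrac12$, as you find. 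The paper's choice $N(u)=u(\tfrac{E_1}{3}+F)$, $P(u)=(1-\tfrac{2u}{3})H$ cannot be a Zariski decomposition: it would give $\mathrm{vol}(D_1)=\tfrac{22}{27}$, whereas $D_1=H_1+F$ dominates the nef divisor $H_1$, so $\mathrm{vol}(D_1)\geq H_1^3=2$. (Two side remarks: your assumption that $S$ may be taken $G$-invariant is neither justified---a $G$-invariant $Z$ need not lie on an invariant member of $|H_2|$---nor needed, since the theorem requires no invariance of $S$ and the case reduction uses only the symmetry of $P(u)\vert_S$ in $e_2,\dots,e_5$.)

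The genuine gap is precisely the step you deferred with ``I expect each orbit to give a value comfortably below~$1$'': with the correct (larger) $P(u)$ this fails for $Z\sim\ell_{ij}$, $i,j\neq 1$, i.e.\ for the bisecant lines of $\Gamma_1$ contained in $S$ --- exactly the case that Lemma~\ref{II.1b} funnels into the present lemma, so it cannot be discarded. Taking $Z\sim\ell_{23}$ and $P(u)\vert_S=(7-4u)\ell-(2-u)e_1-(3-2u)(e_2+\cdots+e_5)$, for $u\in[0,1]$ one finds
\begin{equation*}
\mathrm{vol}\bigl(P(u)\vert_S-vZ\bigr)=
\begin{cases}
(9-4u-u^2)-2v-v^2, & 0\le v\le 1,\\
10-4u-u^2-4v, & 1\le v\le 2-u,\\
10-4u-u^2-4v+2(2-u-v)^2, & 2-u\le v\le 3-2u,\\
(6-3u-2v)^2, & 3-2u\le v\le 3-\tfrac{3u}{2},
\end{cases}
\end{equation*}
the negative part successively absorbing $\ell_{45}$, then $\ell_{14}+\ell_{15}$, then $e_2+e_3$; for $u\in[1,\tfrac32]$, with $w=3-2u$, the volume is $4w^2-2vw-v^2$ on $[0,w]$ and $(3w-2v)^2$ on $[w,\tfrac{3w}{2}]$. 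Integrating gives
\begin{equation*}
S\bigl(W^S_{\bullet,\bullet};Z\bigr)=\frac{3}{22}\Bigl(\frac{177}{24}+\frac{17}{48}\Bigr)=\frac{371}{352}>1,
\end{equation*}
so the theorem yields only $A_X(\Xi)/S_X(\Xi)\geq\min\{2,\tfrac{352}{371}\}<1$ and no conclusion. Thus your proposal does not close the bisecant-line case (nor, for what it is worth, does the paper's own Case~5, whose value $\tfrac{111}{176}$ is an artifact of the deflated $P(u)$). Completing the lemma there requires a new ingredient: a different surface in the flag, a finer nested Abban--Zhuang estimate, or a proof that no $G$-invariant bisecant line of $\Gamma_1$ lies on $Q$.
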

\begin{proof}
    We use Theorem~\ref{corollary:Kento-formula-Fano-threefold-surface-curve} to bound $\beta(\Xi)$ below. Let $D_u= H-uS$ on $\widehat X$ for $u\geq 0$, and write its Zariski decomposition $D_u= P(u)+N(u)$, where for $0\le u\le \frac{3}{2}$, $P(u)$ is nef and 
\[
P(u)=H-uS-u\Big(\frac{E_1}{3}+F\Big)= \Big(1-\frac{2}{3}u\Big)(3H_1-E_1)\mbox{ and }
    N(u)=u\Big(\frac{E_1}{3}+F \Big),
\]
which gives:
\begin{align*}
   S_X(S)&=\frac{1}{(-K_X)^3}\int_{0}^{\tau}\mathrm{vol}(\pi^*(-K_X)-uS)du= \frac{1}{22}\int_{0}^{3/2}\frac{22(3 - 2u)^3}{27} du=\frac{3}{8}<1.
\end{align*}
Note that since $Z\not \subset \big(E_1\cup F\big)$, $\operatorname{ord}_Z \big(N(u)\vert_S\big)=0$.

We now consider $(H-uS)_{\vert S}-vZ$ on $S$ and denote by $P(u,v)+N(u,v)$ its Zariski decomposition for $0\leq u \leq \frac{3}{2}$.  We have 
$$Z\sim \alpha\ell+\sum_{i=1}^5 \alpha_i e_i+ \sum_{1\leq i<j\leq 5} \alpha_{ij}\ell_{ij} + \beta q.$$
Since $Z\not \subset F$, $Z\ne q$ and at least one of the coefficients $\alpha, \alpha_i, \alpha_{i,j}$ is $\geq 1$. 
If $\mathbf l$ is the corresponding curve, since $Z\geq \mathbf{l}$, by convexity of volume:
$$
S\big(W_{\bullet,\bullet}^{S};Z\big)=\frac{3}{22}\int_{0}^{3}\int_{0}^{\infty}\mathrm{vol}\Big(P(u)\big\vert_{\widetilde F}-vZ\Big)dvdu\leqslant \frac{3}{22}\int_{0}^{3}\int_{0}^{\infty}\mathrm{vol}\Big(P(u)\big\vert_{\widetilde F}-v\mathbf{l}\Big)dvdu.
$$
so it is enough to show that the last integral is less than $1$ when $Z=\mathbf l$, for each possible $\mathbf l$.

\par {\bf Case 1.} $Z\sim \ell$.  For $0\leq u \leq \frac{3}{2}$, and $0\leq v \leq \frac{3-2u}{3}$, $N(u,v)=vq$, and we compute
{\allowdisplaybreaks\begin{align*}
    S\big(W_{\bullet,\bullet}^{S};Z\big)&\leqslant \frac{3}{22}\int_{0}^{3/2}\int_{0}^{\infty}\mathrm{vol}\Big(P(u)\big\vert_{S}-v\ell\Big)dvdu=\\
    &=\frac{3}{22}\int_0^{3/2}\int_0^{1-\frac{2u}{3}} \frac{(2u + 3v - 3)(6u + 5v - 9)}{3} dvdu =\frac{3}{16}<1.
\end{align*}}

\par{\bf Case 2.}  $Z\sim e_1$. For $0\leq u \leq \frac{3}{2}$, we have:
\[ N(u,v)= \begin{cases} vq & \text{ for }  0\leq v\leq \frac{2(3-2u)}{3},\\
        vq+(v-2 + \frac{4u}{3} )(\ell_{12} + \ell_{13} + \ell_{14} + \ell_{15})& \text{ for } \frac{2(3-2u)}{3}\leq v\leq \frac{5(3-2u)}{6}. 
\end{cases}\]
We obtain {\allowdisplaybreaks\begin{align*}
    S\big(&W_{\bullet,\bullet}^{S};Z\big)\leqslant \frac{3}{22}\int_{0}^{3/2}\int_{0}^{\infty}\mathrm{vol}\Big(P(u)\big\vert_{S}-ve_1\Big)dvdu=\\
    &=\frac{3}{22}\int_0^{3/2}\Big(\int_0^{\frac{2(3-2u)}{3}} \frac{(2u - 3)(6u + 4v - 9)}{3} dv + \int_{\frac{2(3-2u)}{3}}^{\frac{5(3-2u)}{6}}{\frac{5(3-2u)}{6}} \frac{(10u + 6v - 15)^2}{9} dv\Big)du =\\
    &=\frac{182}{352}<1.
\end{align*}}
\par {\bf Case 3.} $Z\sim e_2$ (or $e_i$, $i\neq 1$). For $0\leq u \leq \frac{3}{2}$, we have:
\[ N(u,v)= \begin{cases} vq & \text{ for }  0\leq v\leq \frac{3-2u}{3},\\
vq+(v-1 + \frac{2u}{3} )(\ell_{23} + \ell_{24} + \ell_{25})& \text{ for } \frac{3-2u}{3}\leq v\leq \frac{2(3-2u)}{3}. \end{cases}\]
In addition
{\allowdisplaybreaks\begin{align*}
    S\big(&W_{\bullet,\bullet}^{S};Z\big)\leqslant \frac{3}{22}\int_{0}^{3/2}\int_{0}^{\infty}\mathrm{vol}\Big(P(u)\big\vert_{S}-ve_2\Big)dvdu=\\
    &=\frac{3}{22}\int_0^{3/2}\Big(\int_0^{\frac{3-2u}{3}} (2u - 3)(2u + 2v - 3) dv + \int_{\frac{3-2u}{3}}^{\frac{2(3-2u)}{3}}  \frac{(4u + 3v - 6)^2}{3} dv\Big)du =\\
    &=\frac{63}{176}<1.
\end{align*}}

\par {\bf Case 4.} $Z\sim \ell_{12}$ (or $\ell_{1j}$). For $0\leq u \leq \frac{3}{2}$, we have: 
\[N(u,v)=\begin{cases}
        0 & \text{ for }  0\leq v\leq \frac{3-2u}{3},\\
        (v-1 + \frac{2u}{3} )(\ell_{34} + \ell_{35} + \ell_{45})& \text{ for } \frac{3-2u}{3}\leq v\leq \frac{2(3-2u)}{3}.
    \end{cases} \]
In addition 
{\allowdisplaybreaks\begin{align*}
    S&\big(W_{\bullet,\bullet}^{S};Z\big)\leqslant \frac{3}{22}\int_{0}^{3/2}\int_{0}^{\infty}\mathrm{vol}\Big(P(u)\big\vert_{S}-v\ell_{12}\Big)dvdu=\\
    &=\frac{3}{22}\int_0^{3/2}\Big(\int_0^{\frac{3-2u}{3}} 4u^2 + \frac{8}{3}uv-12u -9  - 4v - v^2 dv +\int_{\frac{3-2u}{3}}^{\frac{2(3-2u)}{3}}  \frac{2(4u + 3v - 6)(2u + v - 3)}{3} dv\Big)du =\\
    &=\frac{75}{176}<1.
\end{align*}}
 \par {\bf Case 5.} $Z\sim \ell_{23}$ (or $\ell_{ij}$, $i\neq 1$). For $0\leq u \leq \frac{3}{2}$, we have: 
\[N(u,v)=\begin{cases}
        0 & \text{ for } 0\leq v\leq \frac{3-2u}{3},\\
        (v-1 + \frac{2u}{3} )\ell_{45}&\text{ for } \frac{3-2u}{3}\leq v \leq \frac{2(3-2u)}{3},\\
        (v-1 + \frac{2u}{3} )\ell_{45} -(v-2 + \frac{4u}{3} )(\ell_{14} + \ell_{15})&\text{ for }\frac{2(3-2u)}{3}\leq v \leq 3-2u.
    \end{cases}\]
    In addition
{\allowdisplaybreaks\begin{align*}
    S\big(W_{\bullet,\bullet}^{S};Z\big)&\leqslant \frac{3}{22}\int_{0}^{3/2}\int_{0}^{\infty}\mathrm{vol}\Big(P(u)\big\vert_{S}-v\ell_{23}\Big)dvdu=\\
    &=\frac{3}{22}\int_0^{3/2}\Big(\int_0^{\frac{3-2u}{3}} 4u^2  +\frac{4}{3}uv- 12u + 9 - 2v   - v^2 dv + \\
    &\text{ }\text{ }\text{ }\text{ }\text{ }\text{ }\text{ }\text{ }\text{ }\text{ }\text{ }\text{ }+\int_{\frac{3-2u}{3}}^{\frac{2(3-2u)}{3}}  \frac{2(2u - 3)(10u + 6v - 15)}{3} dv + \int_{\frac{2(3-2u)}{3}}^{3-2u}  2(2u + v - 3)^2 dv\Big)du =\\
    &=\frac{111}{176}<1.
\end{align*}}
This finishes the proof, as in all cases we have $\min\Big\{ \frac{1}{S_X(S)}, \frac{1}{S\big(W_{\bullet,\bullet}^{S};Z\big)}\Big\}>1$
\end{proof}
\begin{lemma}\label{II.1b}
   If $Z= c_{Q}(\Xi)$ is a line other than $L_1$, $\beta(\Xi)>0$.
\end{lemma}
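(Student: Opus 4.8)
The plan is to apply the Abban--Zhuang estimate of Theorem~\ref{corollary:Kento-formula-Fano-threefold-surface-curve} to a flag $(S,L)$, exactly as in the treatment of lines in Lemma~\ref{dim1a} for Family~I. Since $L_1$ is the \emph{unique} trisecant to $\Gamma_1$, every line $L\neq L_1$ on $Q$ satisfies $L\cdot\Gamma_1\in\{0,1,2\}$. A general hyperplane of $\DP^4$ through $L$ meets $Q$ in a smooth quadric surface $S_0$ that cuts $\Gamma_1$ transversally in the remaining $H_1\cdot\Gamma_1=5$ points and meets $L_1$ in one further point; its proper transform on $X_1$ is a del Pezzo surface of degree $3$, and its proper transform $S$ on $\widehat X$, obtained by blowing up that extra point of $L_1$, is a del Pezzo surface of degree $2$ with $S\sim H_1$. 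Writing $\ell_1,\ell_2$ for the two rulings of $S_0$, $e_1,\dots,e_5$ for the exceptional curves over $S_0\cap\Gamma_1$ (so $E_1|_S=e_1+\cdots+e_5$) and $e_6$ for the one over $S_0\cap L_1$ (so $F|_S=e_6$), we have $H_1|_S=\ell_1+\ell_2$ and $-K_S=H_2|_S=2\ell_1+2\ell_2-\sum_{i=1}^6 e_i$. As $L\subset S_0$ is a ruling carrying its intersection points with $\Gamma_1$ (and, as distinct $G$-invariant lines on $Q$ turn out to be disjoint, avoiding $e_6$), it is $L\sim\ell_1$ ($L^2=0$) when $L\cap\Gamma_1=\emptyset$, $L\sim\ell_1-e_i$ ($L^2=-1$) when $L$ is unisecant, and $L\sim\ell_1-e_i-e_k$ ($L^2=-2$) when $L$ is bisecant.

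First I would compute $S_X(S)$. On $\widehat X$ one has $H-uS=H-uH_1=(1-u)H_1+H_2+F$, and the intersection table produces a Zariski decomposition $H-uS=P(u)+N(u)$ whose negative part is supported on $E_1$ and $F$; integrating $\mathrm{vol}(P(u))$ up to the pseudo-effective threshold yields $S_X(S)<1$, just as in the hyperplane-section computations $S_X(S)=\tfrac13$ of Lemma~\ref{dim1a} and $S_X(S)=\tfrac38$ of Lemma~\ref{II.1a}. Hence $1/S_X(S)>1$, and the whole difficulty lies in proving $S\big(W^S_{\bullet,\bullet};L\big)<1$.

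Because $L$ is a ruling-type curve and $E_1|_S+F|_S=e_1+\cdots+e_6$ contains no component equal to $L$, we get $\ord_L\big(N(u)|_S\big)=0$, so the first summand of $S\big(W^S_{\bullet,\bullet};L\big)$ vanishes and it remains to bound $\tfrac{3}{22}\int_0^\tau\!\int_0^\infty\mathrm{vol}\big(P(u)|_S-vL\big)\,dv\,du$. I would do this by treating the three cases $L\cdot\Gamma_1\in\{0,1,2\}$ separately: in each case I write out the piecewise Zariski decomposition of $P(u)|_S-vL$ on the degree-$2$ surface $S$, recording which rulings and $(-1)$-curves enter $N(u,v)$ on each $v$-subinterval, precisely in the style of Cases~1--5 of Lemma~\ref{II.1a} and of the two cases of Lemma~\ref{dim1a}. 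Evaluating the iterated integrals then gives $S\big(W^S_{\bullet,\bullet};L\big)<1$ in all three cases, whence $A_X(\Xi)/S_X(\Xi)\geq\min\{1/S_X(S),\,1/S(W^S_{\bullet,\bullet};L)\}>1$ and $\beta(\Xi)>0$.

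The main obstacle is the bisecant case $L\cdot\Gamma_1=2$: there $L\sim\ell_1-e_i-e_k$ is a $(-2)$-curve, so $S$ is only a \emph{weak} del Pezzo surface and $L$ is rigid, the class $vL$ enters $N(u,v)$ for arbitrarily small $v>0$, and the Zariski decomposition acquires the largest number of subintervals, with several $(-1)$-curves entering the negative part simultaneously; keeping track of these on a degree-$2$ del Pezzo (which carries many more negative curves than the cubic surface of Lemma~\ref{dim1a}) is the most delicate book-keeping. Conceptually, one must first verify the geometric picture used above---that a general hyperplane through $L$ gives an irreducible normal quadric surface $S_0$, that distinct $G$-invariant lines on $Q$ are pairwise disjoint (so that $L$ avoids $e_6$), and that the negative part $N(u)$ is supported exactly on $E_1$ and $F$---after which what remains is the lengthy but routine piecewise volume computation across the several $u$- and $v$-ranges.
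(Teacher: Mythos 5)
Your treatment of the case $Z\cap\Gamma_1=\emptyset$ coincides with the paper's proof: one takes the proper transform $S\sim H_1$ of a general hyperplane section of $Q$ through $Z$ (a degree-$2$ del Pezzo surface), gets $S_X(S)=\tfrac38$, and then bounds $S\big(W^S_{\bullet,\bullet};Z\big)$ for $Z\sim\ell_1$. But your plan for the other two cases has a genuine gap. First, the unisecant case is vacuous and should not be attacked computationally at all: $Z$ and $\Gamma_1$ are both $G$-invariant, so $Z\cap\Gamma_1$ is a $G$-invariant finite set, and since $G$ has no fixed points on $Q$ it cannot consist of a single point. This is exactly how the paper reduces to ``$Z\cap\Gamma_1$ is empty or consists of two points'', and it is the observation you are missing (the same argument, not an analogy with Family~I, is what shows distinct $G$-invariant lines are disjoint).

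Second, and more seriously, the bisecant case cannot be closed with your flag: the computation you defer actually fails. With $S\sim H_1$ as above and $Z\sim\ell_1-e_1-e_2$ a $(-2)$-curve, one has $P(u)\big\vert_S=(3-u)(\ell_1+\ell_2)-\sum_{i=1}^5 e_i-ue_6$, and for $0\le u\le 1$ the Zariski chambers of $P(u)\big\vert_S-vZ$ are: nef for $0\le v\le 1$; $N(u,v)=(v-1)(e_1+e_2)$ for $1\le v\le 2-u$; and $N(u,v)=(v-1)(e_1+e_2)+(v-2+u)\sum_{i=3}^5(\ell_2-e_i)$ for $2-u\le v\le 3-2u$, where the volume vanishes. (Note in passing that $Z$ itself \emph{never} enters $N(u,v)$, contrary to your claim, since $\big(P(u)\vert_S-vZ\big)\cdot Z=(1-u)+2v\ge 0$.) The corresponding volumes are $u^2-12u+13-2(1-u)v-2v^2$, then $u^2-12u+15+2uv-6v$, then $4u^2+3v^2+8uv-24u-18v+27$, and integrating gives
\begin{equation*}
S\big(W^S_{\bullet,\bullet};Z\big)=\frac{3}{22}\int_0^1\Big(u^3+6u^2-24u+\tfrac{55}{3}\Big)du=\frac{3}{22}\cdot\frac{103}{12}=\frac{103}{88}>1,
\end{equation*}
so the bound of Theorem~\ref{corollary:Kento-formula-Fano-threefold-surface-curve} is inconclusive for this flag (the minimum becomes $\tfrac{88}{103}<1$). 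The paper avoids this entirely: a bisecant line satisfies $H_2\cdot Z=(2H_1-E_1)\cdot Z=2-2=0$, so its transform is contracted by the morphism to $\DP^2$ and hence lies on a member of $|H_2|$ (a quartic del Pezzo surface), and that situation is already covered by Lemma~\ref{II.1a} (its Case~5, with bound $\tfrac{111}{176}<1$). Your argument therefore needs this reduction in the bisecant case; the hyperplane flag you propose does not suffice.
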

\begin{proof}
    Since there is no $G$-fixed point on $Q$, $Z\cap \Gamma_1$ is empty or consists of two points. In the second case, $H_2\cdot Z=0$, so that $Z$ lies on a section $S\in |H_2|$ and $\beta(\Xi)>0$ by Lemma~\ref{II.1a}.

We now assume that $Z$ is disjoint from $\Gamma_1$ and  denote by $S^Q\simeq \DP^1\times \DP^1$ the general hyperplane section of $Q$ containing $Z$, and by $S$ its proper transform on $\widehat X$. The intersection $S^Q\cap \big(\Gamma_1\cup L_1\big)= \{ p_1, \cdots , p_6\}$ consists of six points, and these points are in general position because any line through $3$ of the points (respectively conic through $6$ of the points) would be contracted by $\pi_1$, the anticanonical map of $X_1$, but the only flopping curve on $X_1$ is $L_1$. As $S$ is the blowup of $S^Q$ at $\{p_1, \cdots , p_6\}$, $S$ a del Pezzo surface of degree $2$. Denote by  $\ell_1, \ell_2$ the pullbacks of the two rulings of $S^Q= \DP^1\times \DP^1$, and by $e_1, \cdots, e_6$ the exceptional divisors. The Mori cone $\NEb(S)$ is generated by $\ell_1, \ell_2, e_1, \cdots, e_6$, and by the classes of
    \begin{enumerate}[-]\item the proper transforms $\ell_{i(1)}$ and $\ell_{i(2)}$ of rulings through the points $p_i$ for $1\le i\le 6$, 
    \item the proper transforms $\ell_{i,j,k}$ for $1\le i<j<k\le 6$ of irreducible conics through 3 of the blownup points ($\ell_{i,j,k}= \ell_1+\ell_2-e_i-e_j-e_k$),
    \item the proper transforms $\kappa_{j(1)}$ and $\kappa_{j(2)}$ of rational cubic curves though $5$ of the $p_i$s (where $\kappa_{j(1)}= 2\ell_1+\ell_2-\sum e_i+e_j$) for $1\le j\le 6$,
    \item and the proper transforms $q_j$ of elliptic quartic curves through $p_1, \cdots, p_6$, which have multiplicity $2$ at $p_j$ for $1\le j\le 6$ ($q_j= 2\ell_1+ 2\ell_2- \sum e_i-e_j$).
    \end{enumerate}
The Zariski decomposition of $\pi^*(-K_X)-uS$ writes $P(u)+N(u)$ where $P(u)$ is nef, and for $0\leq u \leq 1$, $N(u)= uF$. 
We have 
\begin{align*}
   S_X(S)&=\frac{1}{(-K_X)^3}\int_{0}^{\tau}\mathrm{vol}(\pi^*(-K_X)-uS)du= \frac{1}{22}\int_{0}^{1}(1 - u)(u^2 - 17u + 22) du=\frac{3}{8}<1.
\end{align*}

Since $Z$ is disjoint from $\Gamma_1$, without loss of generality we may assume that $Z\sim \ell_1$. 

Using the same notation as before, for $0\leq u\leq \frac{5}{7}$, we have
\[ N(u,v)= \begin{cases}
    0 &\mbox{ for } 0\le v \le \frac{4-3u}{2},\\
     (2v-4 + 3u)\kappa_{6(2)} &\mbox{ if } \frac{4-3u}{2}\leq v\leq \frac{5-4u}{2},\\
        (2v-5 + 4u)\sum \kappa_{i(2)}+(1-u)\kappa_{6(2)} &\mbox{ if }\frac{5-4u}{2}\leq v\leq  \frac{7-5u}{3}.
    \end{cases}\]
For $\frac{5}{7}\le u \le 1$, we have
\[ N(u,v)= \begin{cases}
    0 &\mbox{ for } 0\le v \le \frac{4-3u}{2},\\
     (2v-4 + 3u)\kappa_{6(2)} &\mbox{ if } \frac{4-3u}{2}\leq v\leq \frac{5-4u}{2},\\
        (2v-5 + 4u)\sum \kappa_{i(2)}+(1-u)\kappa_{6(2)} &\mbox{ if }\frac{5-4u}{2}\leq v\leq  \frac{11-9u}{4}.
    \end{cases}\]
Since $Z\not \subset F$, $\ord_{Z}\Big(N(u)\big\vert_S\Big)=0$ and 
{\allowdisplaybreaks\begin{align*}
    S\big(&W_{\bullet,\bullet}^{S};Z\big)= \frac{3}{22}\int_{0}^{1}\int_{0}^{\infty}\mathrm{vol}\Big(P(u)\big\vert_{S}-vZ\Big)dvdu=\\
    &=\frac{3}{22}\int_0^{5/7}\Big(\int_0^{\frac{4-2u}{2}}  u^2 + 2uv - 12u - 6v + 13 dv + \int_{\frac{4-3u}{2}}^{\frac{5-4u}{2}}  10u^2 + 14uv - 36u + 4v^2  - 22v + 29 dv +\\
    &\text{ }\text{ }\text{ }\text{ }\text{ }\text{ }\text{ }\text{ }\text{ }\text{ }\text{ }\text{ }+ \int_{\frac{5-4u}{2}}^{\frac{7-5u}{3}}  2(5u + 3v - 7)(9u + 4v - 11)dv\Big)du +\\
    &+\frac{3}{22}\int_{5/7}^{1}\Big(\int_0^{\frac{4-2u}{2}}  u^2 + 2uv - 12u - 6v + 13 dv + \int_{\frac{4-3u}{2}}^{\frac{5-4u}{2}}  10u^2 + 14uv - 36u + 4v^2  - 22v + 29 dv +\\
    &\text{ }\text{ }\text{ }\text{ }\text{ }\text{ }\text{ }\text{ }\text{ }\text{ }\text{ }\text{ }+ \int_{\frac{5-4u}{2}}^{\frac{11-9u}{4}}  2(5u + 3v - 7)(9u + 4v - 11)dv\Big)du=\frac{18969}{1108811}<1.
\end{align*}}
As above, this completes proof that $\beta(\Xi)>0$.
\end{proof}
\begin{lemma}\label{II.1c}
    If $c_{Q}(\Xi)= L_1$, then $\beta(\Xi)>0$. 
\end{lemma}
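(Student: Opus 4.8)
The plan is to transpose the strategy of Lemma~\ref{dim1b} to the geometry of Family II. Since $L_1$ is the flopping curve contracted by $\pi_1$ to the node $x_0 = \Sing(X)$, a $G$-invariant prime divisor $\Xi$ with $c_Q(\Xi) = L_1$ necessarily has $c_X(\Xi) = \{x_0\}$; equivalently, we must rule out $\beta(\Xi) \leq 0$ for divisors over the node. First I would record that the exceptional divisor $F = \Exc \pi \cong \DP^1 \times \DP^1$ of the blowup $\pi\colon \widehat X \to X$ has $A_X(F) = 2$. Because the proper transforms on $\widehat X$ of the lines through $x_0$ are $(-K_{\widehat X})$-trivial flopping curves meeting $F$, the Zariski decomposition of $\pi^*(-K_X) - uF = H - uF$ acquires a non-divisorial negative part once $u > 1$. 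To make it divisorial I would pass to the model $\omega\colon \widetilde X \to \widehat X$ obtained by blowing up these finitely many $G$-invariant flopping curves, with exceptional divisor $\Lambda$; then $\widetilde F = \omega^* F$ is a smooth rational (weak del Pezzo) surface, obtained from $\DP^1\times\DP^1$ by blowing up the line directions at $x_0$, and it still satisfies $A_X(\widetilde F) = 2$.

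The first computation is $S_X(\widetilde F)$. Using the Family II intersection numbers of the link ($H_1^3 = 2$, $E_1^3 = -13$, $F^3 = 2$, $H_1\cdot E_1^2 = -5$, $E_1\cdot F^2 = -3$, etc.) together with the blow-up relations defining $\Lambda$, I would determine the Zariski decomposition $\omega^*\pi^*(-K_X) - u\widetilde F = P(u) + N(u)$, expecting $N(u) = 0$ for $0 \le u \le 1$ and $N(u)$ proportional to $\Lambda$ for $1 \le u \le \tau$. Integrating $\operatorname{vol}$ over $[0,\tau]$ should then give $S_X(\widetilde F) < 2$, hence $\beta(\widetilde F) > 0$.

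It remains to treat a one-dimensional center $Z = c_{\widetilde X}(\Xi) \subset \widetilde F$. Here I would apply Theorem~\ref{corollary:Kento-formula-Fano-threefold-surface-curve} in the form
\[
\frac{A_X(\Xi)}{S_X(\Xi)} \geq \min\Bigg\{\frac{2}{S_X(\widetilde F)},\ \frac{1}{S\big(W^{\widetilde F}_{\bullet,\bullet}; Z\big)}\Bigg\},
\]
where the factor $2$ reflects $A_X(\widetilde F) = 2$. Since the first term already exceeds $1$ by the previous step, it suffices to prove $S\big(W^{\widetilde F}_{\bullet,\bullet}; Z\big) < 1$. Writing $Z$ in a basis of $\Pic(\widetilde F)$ and using that $G$ fixes no point of $Q$ (so $Z$ cannot be supported only on the $G$-non-invariant extremal curves, which forces at least one coefficient on an extremal class to be $\geq 1$), I would reduce by convexity of the volume function to finitely many extremal classes, and in each case compute the Zariski decomposition of $P(u)\vert_{\widetilde F} - vZ$ together with the resulting double integral, verifying $S\big(W^{\widetilde F}_{\bullet,\bullet}; Z\big) < 1$ for the dominant class.

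The main obstacle is the bookkeeping on $\widetilde X$: correctly identifying the number of lines through $x_0$ (hence the degree of $\widetilde F$ and the class of $\Lambda$), and then carrying out the two families of Zariski-decomposition and volume computations cleanly, exactly as in the Family I case but with the Family II intersection data. The $G$-invariance input — the absence of a fixed point on $Q$ and the explicit orbit structure of $L_1 \cap \Gamma_1$ — is precisely what guarantees that the convexity reduction lands on a curve class $Z$ for which the integral stays below $1$.
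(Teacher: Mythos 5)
Your high-level strategy is the same as the paper's: blow up the $G$-invariant set of lines through $x_0$ (there are three in Family II), pass to $\omega\colon\widetilde X\to\widehat X$, show $\beta(\widetilde F)>0$, and then bound curve centres $Z\subset\widetilde F$ via Theorem~\ref{corollary:Kento-formula-Fano-threefold-surface-curve}, exactly as in Lemma~\ref{dim1b}. But there is a genuine gap at the point where you ``expect $N(u)=0$ for $0\le u\le 1$ and $N(u)$ proportional to $\Lambda$ for $1\le u\le\tau$''. In Family II this is false: the conic-bundle side of the link produces a distinguished surface $R=f_2^{-1}(\ell)$, the preimage of the line $\ell=f_2(L_1)\subset\DP^2$, which is the unique section of $|H_2-F|=|2H_1-E_1-2F|$, and its proper transform $\widetilde R=\omega^*R-\Lambda$ enters the negative part. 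The correct decomposition of $\omega^*\pi^*(-K_X)-u\widetilde F$ has $N(u)=0$ on $[0,1]$, $N(u)=(u-1)\Lambda$ on $[1,2]$, but $N(u)=(u-1)\Lambda+(u-2)\widetilde R$ on $[2,3]$. With the negative part you posit, the ``positive part'' is not nef for $u>2$, so the volume integrand, and hence $S_X(\widetilde F)$ (which equals $161/88$), would be computed from a wrong decomposition. This is not mere bookkeeping about the number of lines: it is a geometric feature of Family II with no analogue in Family I.

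The same omission breaks your treatment of curve centres. Because $\widetilde R$ appears in $N(u)$, the order term $\ord_Z\big(N(u)\vert_{\widetilde F}\big)$ in the formula for $S\big(W^{\widetilde F}_{\bullet,\bullet};Z\big)$ does \emph{not} vanish for every curve $Z\not\subset\Lambda\vert_{\widetilde F}$: when $Z=\widetilde R\vert_{\widetilde F}$ it equals $1$ on $u\in[2,3]$, contributing
\[
\frac{3}{(-K_X)^3}\int_2^{3}\big(P(u)^{2}\cdot \widetilde F\big)\,du=\frac{9}{22}
\]
to $S\big(W^{\widetilde F}_{\bullet,\bullet};Z\big)$. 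Your proposed reduction --- bound the volume integral by convexity using an extremal class and check it is $<1$ --- cannot absorb this term: the extremal-class integrals are $29/44$ (for $\ell_{123}$) and $59/88$ (for $\ell_{i(j)}$), and adding $9/22=36/88$ to either gives $47/44$ or $95/88$, both exceeding $1$. The case $Z=\widetilde R\vert_{\widetilde F}$ must therefore be treated separately with its exact class, for which the volume integral evaluates to $15/44$, yielding $S\big(W^{\widetilde F}_{\bullet,\bullet};Z\big)=\tfrac{9}{22}+\tfrac{15}{44}=\tfrac34<1$. Without identifying $\widetilde R$, its role in $N(u)$, and this separate case, the argument does not close.
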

\begin{proof}

  By \cite{KP23}, there are precisely 3 lines through $x_0\in X$, and by construction, the set of lines through $x_0\in X$ is $G$-invariant. If $L\ni x_0$ is a line and $\widehat{L}$ is its proper transform on $\widehat X$, $-K_{\widetilde X}\cdot \widehat L=0$ and $\widehat L$ is a flopping curve.  Let $\omega\colon \widetilde{X}\to \widehat X$ be the blowup of the proper transforms of the 3 lines through $x_0\in X$, and denote by $\Lambda= \Lambda_1+ \Lambda_2+\Lambda_3$ its ($G$-invariant) exceptional divisor. Denote by $\widetilde F= \omega^* F$, $\widetilde E_1= \omega^* E_1-\Lambda$ the proper transforms of $F$ and $E_1$ on $\widetilde{X}$, and by $\widetilde R = \omega^* R-\Lambda$, the proper transform of the unique section of $|2H_1-E_1-2F|= |H_2-F|$. On $\widetilde X$, we have the intersection numbers: \begin{align*}
  &\Lambda^3=6, & & \Lambda^2\cdot \omega^* F = -3, & & \Lambda\cdot \omega^*(F)^2 = 0, & & \Lambda \cdot \omega^*(F)\cdot \omega^*(E_1) =0 ,\\
    &  \Lambda^2 \cdot  \omega^*(E_1) = 3, & & \Lambda \cdot  \omega^*(E_1)^2 =0, & & \omega^*F^3= 2,&& \omega^*E_1^3= -13.\\
    &      & &
\end{align*}

We first show that $\beta(F)>0$. 
The Zariski decomposition of $\omega^*\pi^* (-K_X)- u\widetilde F$ can be written $P(u)+N(u)$, where $P(u)$ is nef and
 \[ N(u)=
    \begin{cases}
   0 & \mbox{ for } 0 \le u \le 1,\\
    (u-1)\Lambda & \mbox{ for } 1\le u\le 2, \\
    (u-1)\Lambda+ (u-2)\widetilde{R} &\mbox{ for } 2\le u\le 3.
    \end{cases}
\]

   
We have $A_X(\widetilde F)=2$ and
\begin{align*} \label{eq:EL-II}
   S_X(\widetilde F)&=\frac{1}{(-K_X)^3}\int_{0}^{\tau}\mathrm{vol}(\omega^*\pi^*(-K_X)-u\widetilde F)du\\
   &=\frac{1}{22}\Bigg(\int_{0}^{1} 22-2u^3 du + \int_{1}^{2} (u + 1)(u^2 - 10u + 19) du  + \int_{2}^{3}   3(u - 3)(2u - 7) du  \Bigg)=\frac{161}{88}.
\end{align*}

So that $\beta(F)=\frac{15}{88}>0$.

Now assume that $\Xi$ is not $F$ and denote by $Z$ the centre of $\Xi$ on $\widetilde X$. By construction, $Z= c_{\widetilde X}(\Xi) \subset \widetilde F$  is a curve, and $\widetilde F$ is a blowup of $\DP^1\times \DP^1$ in three points in general position, so it is a del Pezzo surface of degree $5$. We denote by $\ell_1, \ell_2$ the proper transforms of the two rulings of $\DP^1\times \DP^1$, and by $e_1, e_2, e_3$ the $(-1)$-curves 
The extremal rays of the Mori cone $\NEb(\widetilde F)$ are the $(-1)$-curves $e_1, e_2, e_3$, the proper transforms $\ell_{i(1)}$ and $\ell_{i(2)}$ of rulings through the blownup points for $1\le i\le 3$, and the proper transform of the conic through the three blownup points $\ell_{123}= \ell_1+\ell_2-e_1-e_2-e_3$. 

 We will estimate $\beta(\Xi)$ by considering the flag $Z\subset \widetilde F\subset \widetilde X$; we write
\begin{multline*}
S\big(W^{\widetilde F}_{\bullet,\bullet};Z\big)=\frac{3}{(-K_X)^3}\int_0^{3}\big(P(u)^{2}\cdot \widetilde F\big)\cdot\ord_{Z}\Big(N(u)\big\vert_{\widetilde F}\Big)du+\\+\frac{3}{(-K_X)^3}\int_0^{3}\int_0^\infty \mathrm{vol}\big(P(u)\big\vert_{\widetilde F}-vZ\big)dvdu.
\end{multline*} 
Since $c_Q (\Xi)$ is one-dimensional, $Z\not \subset \Lambda\big \vert_{\widetilde F}$, and $\ord_{Z}\Big(N(u)\big\vert_{\widetilde F}\Big)=0$ unless $Z= \widetilde R\big \vert_{\widetilde F}$.

We first assume that $Z\neq \widetilde R\big \vert_{\widetilde F}$. There are positive integers $\alpha_i, \alpha_{ij}$ and $\alpha_{123}$ so that 
\[ Z\sim \alpha_1e_1+\alpha_2e_2+ \alpha_3e_3+ \sum \alpha_{ij}\ell_{i(j)}+ \alpha_{123}\ell_{123}.\]
Since $Z\not \subset \Lambda\big \vert_{\widetilde F}$, $\alpha_{ij}$ and $\alpha_{123}$ are not all simultaneously $0$.  Let $\mathbf l$ denote one of the $(-1)$ curves other than $e_1,e_2,e_3$ such that $Z\geq \mathbf l$, then by convexity of volume:
$$
S\big(W_{\bullet,\bullet}^{S};Z\big)=\frac{3}{22}\int_{0}^{3}\int_{0}^{\infty}\mathrm{vol}\Big(P(u)\big\vert_{\widetilde F}-vZ\Big)dvdu\leqslant \frac{3}{22}\int_{0}^{3}\int_{0}^{\infty}\mathrm{vol}\Big(P(u)\big\vert_{\widetilde F}-v\mathbf{l}\Big)dvdu.
$$
so it is enough to show that the last integral is less than $1$ when $Z=\mathbf l$.

 \par {\bf Case 1.} Assume that $Z\sim \ell_{123}$, and let $P(u,v)$ and $N(u,v)$ be the positive and negative parts of the Zariski decomposition of  $(\omega^*\pi^*(-K_X)-u\widetilde F|_{\widetilde F}-vZ$. Then, for $0\le u\le 1$, $N(u,v)=
    v(e_1+e_2+e_3)$ for $0\le v\le u$;
for $1\leq u\leq 2$,
   \[ N(u,v)= \begin{cases}
       0 &\mbox{ for } 0\leq v\leq u-1,\\(v-u+1)(e_1+e_2+e_3) &\mbox{ for } u-1\le v\le u.
   \end{cases} 
   \]
   and for $2\leq u\leq 3$, 
 \[ N(u,v)= \begin{cases}
       0 &\mbox{ for } 0\leq v\leq 1,\\
       (v-1)(e_1+e_2+e_3)&\mbox{ for } 1\le v\le 4-u.
   \end{cases} 
   \]
Putting things together, we get:
{\allowdisplaybreaks\begin{align*}
    S\big(&W_{\bullet,\bullet}^{\widetilde F};Z\big)\leqslant \frac{3}{22}\int_{0}^{3}\int_{0}^{\infty}\mathrm{vol}\Big(P(u)\big\vert_{\widetilde F}-v\ell_{123}\Big)dvdu=\\
    &=\frac{3}{22}\Bigg(\int_0^{1}\int_0^u 2(u - v)^2 dvdu + \\
    &+\int_1^{2}\Big(\int_0^{u-1}  -u^2 + 2uv  + 6u - v^2 - 6v - 3  dv + \int_{u-1}^u  2(u - v)^2 dv \Big)du+\\
    &+\int_2^{3}\Big(\int_0^{1}   2uv - v^2 - 4u - 6v + 13 dv + \int_{1}^{4-u} 2(v - 2)(v + u - 4) \Big)du\Bigg)=\frac{29}{44}<1.
\end{align*}}
and $\beta(\Xi)>0$. 
\par {\bf Case 2.} Now assume that $Z\sim \ell_{1(2)}$ (or any $\ell_{i(j)}$). The positive and negative parts of the Zariski decomposition of  $(\omega^*\pi^*(-K_X)-u\widetilde F)|_{\widetilde F}-vZ$ are as follows.  

For $0\le u\le 1$,
$N(u,v)=ve_1$ for $0 \leq v \leq u$; 
for $1\le u\le 2$
\[ N(u,v)= \begin{cases} 0  &\mbox{ for }0 \le v \le u-1,\\
         (v -u + 1)e_1 &\mbox{ for } u-1\le v \le 1,\\
         (v -u + 1)e_1 +(v-1)(\ell_{2(1)}+\ell_{3(1)})&\mbox{ for }1\le v\le u. 
\end{cases}\] 
In addition for $2\le u \le 3$
   \[ N(u,v)=
      \begin{cases}
         0  &\mbox{ for } 0 \le v \le 1,\\
         (v - 3 + u )(\ell_{2(1)} + \ell_{3(1)}) &\mbox{ for } 3-u \le v\le 1,\\
         (v - 3 + u )(\ell_{2(1)} + \ell_{3(1)})+(v-1)e_1 &\mbox{ for } 1\le v\le 4-u.
      \end{cases}\]

We have
{\allowdisplaybreaks\begin{align*}
    S\big(&W_{\bullet,\bullet}^{\widetilde F};Z\big)\leqslant \frac{3}{22}\int_{0}^{3}\int_{0}^{\infty}\mathrm{vol}\Big(P(u)\big\vert_{\widetilde F}-v\ell_{1(2)}\Big)dvdu=\\
    &=\frac{3}{22}\Bigg(\int_0^{1}\int_0^u 2u(u - v)  dvdu + \\
    &+\int_1^{2}\Big(\int_0^{u-1}  -u^2 - v^2 + 6u - 2v - 3 dv  + \int_{u-1}^1 -2uv + 4u - 2  dv + \int_{1}^u  2(2 - v)(u - v) dv\Big) du+\\
    &+\int_2^{3}\Big(\int_0^{3-u}  -v^2 - 4u - 2v + 13 dv + \int_{3-u}^{1} 2u^2 + 4uv + v^2 - 16u - 14v + 31 dv + \\
    &\text{ }\text{ }\text{ }\text{ }\text{ }\text{ }\text{ }\text{ }+\int_{1}^{4-u}  2(u + v - 4)^2 dv\Big)du\Bigg)=\frac{59}{88}<1.
\end{align*}}
This finishes the proof that $\beta(\Xi)>0$ when $Z\neq \widetilde R\big \vert_{\widetilde F}$.

Assume that $Z= \widetilde R\big \vert_{\widetilde F}$, so that $\ord_{Z}\Big(N(u)\big\vert_{\widetilde F}\Big)=1$ when $2\le u\le 3$. 
We have
$$\frac{3}{(-K_X)^3}\int_2^{3}\big(P(u)^{2}\cdot \widetilde F \big)\cdot\ord_{Z}\Big(N(u)\big\vert_{\widetilde F}\Big)du=\frac{9}{22}.$$
As before, denote by $P(u,v)$ and $N(u,v)$ the positive and negative parts of the Zariski decomposition of $\omega^*\pi^*(-K_{X}-u\widetilde F)\big \vert_{\widetilde F}-vZ$. 
When $0\le u\le 1$, 
$N(u,v)=v(e_1 + e_2 + e_3)$ for $0 \le v\le u/2$, 
when $1\le u\le 2$, 
\[ N(u,v)= \begin{cases}
         0 &\mbox{ for }0\le v\le u-1,\\
         (v -u + 1)(e_1 + e_2 + e_3) &\mbox{ for }u-1\le v\le u/2,\\
       \end{cases}\] 
and finally, when $2\le u\le 3$,
\[ N(u,v)= \begin{cases}
         0  &\mbox{ for }0\le v\le 3-u,\\
         (v - 3+u)(\ell_{1(1)}+\ell_{2(1)}+\ell_{3(1)})  &\mbox{ for } 3-u  \le v \le 2-u/2.
      \end{cases}\] 
      We have
{\allowdisplaybreaks\begin{align*}
    S\big(&W_{\bullet,\bullet}^{\widetilde F};Z\big)=\frac{9}{22}+ \frac{3}{22}\int_{0}^{3}\int_{0}^{\infty}\mathrm{vol}\Big(P(u)\big\vert_{\widetilde F}-vZ\Big)dvdu=\\
    &=\frac{9}{22}+\frac{3}{22}\Bigg(\int_0^{1}\int_0^{u/2} 2(u - v)(u - 2v) dvdu + \\
    &+\int_1^{2}\Big(\int_0^{u-1}  -u^2 + v^2 + 6u - 6v - 3  dv + \int_{u-1}^{u/2}  2(u - v)(u - 2v) dv \Big)du+\\
    &+\int_2^{3}\Big(\int_0^{3-u}  2uv + v^2 - 4u - 10v + 13 dv + \int_{3-u}^{2-u/2} (u + 2v - 4)(3u + 2v - 10)  dv \Big)du\Bigg)=\frac{3}{4}<1.
\end{align*}}
      We see that $S_X(\widetilde F)<2$ and $S\big(W^{\widetilde F}_{\bullet,\bullet};Z\big)<1$, so that $\beta(\Xi)>0$. 
\end{proof}
Now we need to consider $G$-invariant prime divisors $\Xi$ whose centre on $Q$ lies on $\Gamma_1$. 
\begin{lemma}\label{lemma:II-EC}
    If $Z= c_{\widetilde X}(\Xi) \subset E_1$, then $\beta(\Xi)>0.$
\end{lemma}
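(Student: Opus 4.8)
The plan is to adapt the proof of Lemma~\ref{lemma:I-EC} to the quadric $Q\subset\DP^4$. First I would pin down the center: since $Z=c_{\widetilde X}(\Xi)\subset E_1$ and $f_1$ contracts $E_1$ to $\Gamma_1$, the image $c_Q(\Xi)=f_1(Z)$ is a $G$-invariant subvariety of $\Gamma_1$; by Claim~\ref{cla2.1} there is no $G$-fixed point on $Q$, so $c_Q(\Xi)$ cannot be a point and must equal $\Gamma_1$. In particular $Z$ dominates $\Gamma_1$, so $Z\cdot\phi\ge 1$, where $\phi$ denotes the fiber class of the ruling $E_1\to\Gamma_1$.

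The one genuine difference from Family~I is the choice of fibration. Projecting $Q$ from the trisecant $L_1$ yields the conic bundle over $\DP^2$, and the corresponding bound of Corollary~\ref{CorollaryA16} is too weak here: the image of $\Gamma_1$ under this projection is a conic, so a center $Z$ lying over $\Gamma_1$ may have $H_2\cdot Z=2$, which does not contradict $H_2\cdot Z\le 2$. I would instead use a $\DP^1$-fibration by hyperplane sections. Choosing a general plane $\Pi\subset\DP^4$ with $\Pi\cap\Gamma_1=\emptyset$, the pencil of hyperplanes through $\Pi$ defines a rational map $Q\dashrightarrow\DP^1$ whose fibers are the quadric surfaces $Q\cap H$, each meeting the quintic $\Gamma_1$ in $5$ points. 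Blowing up the base conic $Q\cap\Pi$ together with $\Gamma_1$ resolves this to a morphism $\eta\colon Y\to\DP^1$; because the two blown-up loci are disjoint, the exceptional divisor $E_1$ over $\Gamma_1$ is unchanged, and the center $c_Y(\Xi)$ is again the curve $Z\subset E_1$ dominating $\Gamma_1$.

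The intersection estimate is then immediate. A general fiber $T$ of $\eta$ is the proper transform of a hyperplane section $S^Q$. Since $S^Q\not\supset\Gamma_1$ but meets it in the $5$ points $S^Q\cap\Gamma_1$, the restriction $T\vert_{E_1}$ equals $\sum_{i=1}^5\phi_{p_i}=5\phi$ (the base conic $Q\cap\Pi$ being disjoint from $E_1$). Hence
\[ T\cdot Z=\big(T\vert_{E_1}\big)\cdot Z=5\,(Z\cdot\phi)\ge 5>1.\]
Exactly as in Lemma~\ref{lemma:I-EC}, this violates the Nadel-type bound of Lemma~\ref{CorollaryA15}, and therefore $\beta(\Xi)>0$.

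The step I expect to require the most care is the construction of $Y$ and the morphism $\eta$: one must verify that $\eta$ has connected fibers and that the bound of Lemma~\ref{CorollaryA15} may legitimately be invoked on this birational model (just as the flop $\widehat X^+\dashrightarrow\widehat X$ mediated between the fibration and $\widehat X$ in Family~I), and that the identification of $E_1$ and the class computation $T\vert_{E_1}=5\phi$ survive on $Y$. Once $\eta$ is in place the bound $T\cdot Z\ge 5$ holds uniformly for every center dominating $\Gamma_1$, so no analysis of the class of $Z$ inside $E_1$ is needed.
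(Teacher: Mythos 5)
Your proof is correct, but it is genuinely different from the paper's, and your reason for abandoning the naive adaptation of Lemma~\ref{lemma:I-EC} turns out to be precisely on target. The paper's own proof does what you feared: after the same reduction (no $G$-fixed points on $Q$, so $c_Q(\Xi)=\Gamma_1$), it uses the projection of $Q$ away from the trisecant $L_1$, i.e.\ the morphism $\Bl_{L_1}Q\to\DP^2$ whose fibres are the residual lines of planes through $L_1$, resolves it on $\widetilde X$ through the flop $\widehat X^+\dashrightarrow\widehat X$, asserts $T\cdot\widetilde Z\geq 5$ for $T$ the pullback of a general line, and concludes by Corollary~\ref{CorollaryA16}. (One small correction to your account: this projection is \emph{not} the conic bundle $f_2$ of the Sarkisov link --- its fibres are lines and its fibre class is $H_1-F$, not $H_2$ --- but your objection applies to it verbatim.)

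Your objection is the substantive point of comparison. For the paper's fibration, $T$ is the proper transform of a hyperplane section $Q\cap\mathcal H$ with $\mathcal H\supset L_1$, and its restriction to the exceptional divisor over $\Gamma_1$ has class $5\phi-3\phi=2\phi$: the three fibres over $\Gamma_1\cap L_1$ are removed in passing to the proper transform, which is exactly the statement that the projection maps $\Gamma_1$ birationally onto a conic. Hence $T\cdot\widetilde Z=2\deg(\widetilde Z\to\Gamma_1)$, which equals $2$ when $\widetilde Z$ has degree $1$ over $\Gamma_1$; such $G$-invariant curves exist (since $\deg N_{\Gamma_1/Q}=13$ is odd, the ruled surface $E_1$ has a distinguished, hence $G$-invariant, section), and $T\cdot\widetilde Z=2$ does not contradict the bound $\leq 2$ of Corollary~\ref{CorollaryA16}. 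The paper's ``$\geq 5$'' is the count one would get from the \emph{total} transform of $Q\cap\mathcal H$, not from the fibre of the fibration, so as written that step needs repair. Your pencil of hyperplane sections through a general plane $\Pi$ with $\Pi\cap\Gamma_1=\emptyset$ avoids exactly this cancellation: the base conic $Q\cap\Pi$ misses $\Gamma_1$, so the fibre class restricts to $E_1$ as $5\phi$, giving $T\cdot Z\geq 5>1$ and a genuine contradiction with Lemma~\ref{CorollaryA15}. Two further points in favour of your route: your model $Y=\Bl_{\Gamma_1\cup(Q\cap\Pi)}Q$ is $\Bl_{Q\cap\Pi}X_1$ and therefore maps to $X$ by a morphism, so no flop is needed; and $G$-equivariance of $\eta$ is not required, since the Nadel-type bound is applied to the nklt locus of a pair that is already $G$-invariant. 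The caveats you flag --- connectedness of fibres and invoking Lemma~\ref{CorollaryA15} on a model that is only weak Fano rather than Fano --- are real, but they are exactly the hypotheses the paper's own argument (on $\widetilde X$) also uses without comment, so your proof is no less rigorous than the original; in substance it repairs it.
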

\begin{proof}
Assume that $Z\subset E_1$, then since there is no $G$-fixed point on $Q\subset \DP^4$, $f_1(Z)= c_{Q}(\Xi)$ is the curve $\Gamma_1$. 
Denote by $Q_1\to Q$ the blowup of the line $L_1$ and by $Q_1\to \DP^2$ the morphism induced by the projection $Q\subset \DP^4 \dashrightarrow \DP^2$ away from $L_1$. Let $\widehat X^+\to Q_1$ be the blowup of the proper transform of $\Gamma_1$, then $\widehat X^+\dashrightarrow \widehat X$ is a flop, and there is a morphism $\widetilde X\to \widehat X$. Denote by $\eta$ the composition $\widetilde X\to \widehat X\to Q_1\to \DP^2$.

If $T$ is a~general fiber of $\eta$, $T\cdot \widetilde Z\geqslant 5$, hence, by Lemma~\ref{CorollaryA16}, $\beta(\Xi)>0$.
\end{proof}

\begin{lemma}
\label{lemma:primedivisors-II}  If $\Xi$ is a $G$-invariant prime divisor over $X$ with centre a prime divisor $D_X= c_X(\Xi)$ such that $\beta(\Xi)<0$, then $D_X\in |H_2|$.

\end{lemma}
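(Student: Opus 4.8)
The plan is to follow the strategy of Lemma~\ref{I:dim2}, replacing the effective cone of the Family~I models by that of the Family~II model $X_1$. As in that proof, if such a $\Xi$ exists then $D_X=c_X(\Xi)$ is a $G$-invariant irreducible normal surface arising as a minimal log canonical centre of a pair $(X,\tfrac34\mathcal D)$ with $\mathcal D\subset|-K_X|_\DQ$ a $G$-invariant linear system; by Lemma~\ref{lemma144} together with the argument in the proof of \cite[Theorem 1.52]{Fano21}, this forces
\[ -K_X\sim_\DQ \lambda D_X+\Delta_X \]
for some effective $\DQ$-divisor $\Delta_X$ and some rational $\lambda>\tfrac43$. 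Since $\pi_1\colon X_1\to X$ is small, pulling back along $\pi_1$ (which takes $-K_X$ to $H$) yields $H\sim_\DQ \lambda D_1+\Delta_1$ on $X_1$, where $D_1$ is the proper transform of $D_X$ and $\Delta_1\geq 0$.

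The key input, and the step I expect to be the main obstacle, is the shape of the pseudo-effective cone of $X_1$. I would record that
\[ \Effb(X_1)=\DR_{\geq 0}[E_1]+\DR_{\geq 0}[H_2]. \]
This follows from the structure of the Sarkisov link: $f_1$ contracts the divisor $E_1$, giving one extremal ray, while on the flopped model $X_2$ the contraction $f_2$ is a conic bundle, i.e.\ a Mori fibre space possessing no exceptional divisor, so that the nef class $H_2$ (which satisfies $H_2^3=0$, hence is not big) spans the opposite extremal ray. Unlike in Family~I there is no second exceptional divisor $E_2$, and this is exactly why the conclusion is the existence of a divisor in $|H_2|$ rather than an outright contradiction.

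Granting this, the conclusion is a short convexity computation. Using $H_2=2H_1-E_1$ I would rewrite $-K_X=H=3H_1-E_1$ in the basis $\{E_1,H_2\}$ as $H=\tfrac12 E_1+\tfrac32 H_2$, and write $D_1=pE_1+qH_2$, $\Delta_1=p'E_1+q'H_2$ with $p,q,p',q'\geq 0$. The relation $H=\lambda D_1+\Delta_1$ then reads $\lambda p+p'=\tfrac12$ and $\lambda q+q'=\tfrac32$, so that $p\leq \tfrac1{2\lambda}<\tfrac38$ and $q\leq \tfrac3{2\lambda}<\tfrac98$ since $\lambda>\tfrac43$. Because $D_1=2qH_1+(p-q)E_1$ is an integral class, $2q$ and $p-q$ are integers; combining $q\in\{0,\tfrac12,1\}$ with $p,q\geq 0$, the bounds above, and the primality of $D_1$ leaves only $p=0$, $q=1$, i.e.\ $D_1\sim H_2$. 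Hence $D_X\in|H_2|$.

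I expect the cone identification (and the routine check that $\pi_1$-pullback behaves as claimed, $\pi_1$ being small) to be the only substantive point. Once $\Effb(X_1)$ is known, the three cases that must be excluded—$D_X=E_1$, a $G$-invariant hyperplane section of $Q$, and a $G$-invariant quadric section not containing $\Gamma_1$—all collapse into the single integrality computation above, with the class $H_2$ surviving precisely because it spans an extremal ray of the effective cone.
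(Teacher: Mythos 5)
Your proof is correct, and while it shares the paper's skeleton (the reduction to $-K_X\sim_{\DQ}\lambda D_X+\Delta_X$ with $\lambda>\frac{4}{3}$ via \cite[Theorem 1.52]{Fano21}, and the description $\Effb(X_1)=\DR_{\ge 0}[E_1]+\DR_{\ge 0}[H_2]$, which the paper simply recalls), your concluding argument is genuinely different. The paper argues geometrically: it first excludes $D_X=E_1$ by an effectivity computation, then pushes $D_X$ forward to $Q$, bounds the degree $d$ of $f_1(D)$ by $3\geq\lambda d$, invokes the absence of a $G$-invariant hyperplane section to force $d=2$, and finally uses effectivity of $\Delta$ to force multiplicity $1$ along $\Gamma_1$, i.e.\ $D_1\sim H_2$. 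You instead write all classes in the basis $\{E_1,H_2\}$ and run a single convexity-plus-integrality computation: the bounds $p<\frac{3}{8}$, $q<\frac{9}{8}$ together with $2q\in\DZ$, $p-q\in\DZ$, $p,q\geq 0$ leave only $D_1\sim H_2$. This uniformly disposes of $E_1$ (the case $q=0$), hyperplane sections (which have $p=q=\frac12$, killed by $p<\frac38$), and quadric sections, and notably does not use the group action beyond the setup step: the geometric fact the paper leans on (no $G$-invariant hyperplane section) turns out to be unnecessary, since the hypothesis that $\Gamma_1$ lies on no hyperplane section is already encoded in the effective cone ($H_1-E_1=\frac12 H_2-\frac12 E_1$ is not pseudo-effective). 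Your justification of the cone description itself (the rigid divisor $E_1$ and the nef, non-big class $H_2$ each lie on the boundary of a two-dimensional cone) is also sound. What the paper's route buys in exchange is geometric transparency about which $G$-invariant surfaces in $Q$ are being excluded; what yours buys is uniformity and a slightly stronger statement, valid without the invariant-hyperplane input.
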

\begin{proof}
The centre $c_{X}(\Xi)= D_X$ is the $G$-orbit of a minimal log canonical centre of a suitable pair $(X, \frac{3}{4} \mathcal D)$ for $\mathcal D\subset |-K_X|_{\DQ}$ a $G$-invariant linear system, so that $D_X$ is a $G$-invariant irreducible normal surface
 with 
\[ -K_X \sim_{\DQ} \lambda D_X+ \Delta_X \]
for some effective $\DQ$-divisor $\Delta_X$ and rational number $\lambda> \frac{4}{3}$ (see proof of \cite[Theorem 1.52]{Fano21}).
We show that then, $D_X$ is linearly equivalent to $H_2$ (here since $X_1\to X$ is a small map, we also denote $c_{X_1}(\Xi)$ by $D_X$). 

Recall that $\Effb(X_1)= \DR_{\ge 0} [E_1]+ \DR_{\ge 0}[H_2]$, and $H_2\sim 2H_1-E_1$.
If $D_X= E_1$, then 
\[ \Delta \sim 3H_1-(1+\lambda)E_1\sim \frac{3}{2}(2H_1-E_1)+\big(\frac{3}{2}-(1+\lambda)\big)E_1\]
and since $\lambda>\frac{1}{2}$, this is impossible. 

Now assume that $D_X\neq E_1$, so that $f_1(D_X)$ is a $G$-invariant surface on $Q$, and let $d$ be its degree. Since 
  $$3H_1\sim \lambda f_1(D_X)+f_1(\Delta_X),$$
  $3\geq \lambda d$ and $d= 1$ or $d=2$. 
 As there is no $G$-invariant hyperplane section, $d=2$ and 
 \[ \Delta\sim (3-2\lambda)H_1 + (\lambda m_1-1) E_1\]
where $m_1$ is the multiplicity of $f_1(D_X)$ along $\Gamma_1$. 
Since 
\[ \Delta_X \sim \frac{3-2\lambda}{2}(2H_1-E_1)+ \big(\frac{3-2\lambda}{2}+ \lambda m_1-1 \big)E_1\]
we see that $m_1\geq 1$ and $D_X\in |H_2|$. 
\end{proof}

\begin{lemma}
\label{lemma:curves-L-II}
Let $Z= c_{\widetilde X}(\Xi)$ be an irreducible curve that is not contained in $E_1$. Then, $\beta(\Xi)>0$ unless $c_{Q}(\Xi)$ is a line.
\end{lemma}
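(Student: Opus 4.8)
The plan is to use the conic bundle $f_2$ together with the Nadel-type inequality of Corollary~\ref{CorollaryA16} to push every non-line centre onto a section of $|H_2|$, where Lemma~\ref{II.1a} already applies. Since there is no $G$-fixed point on $Q$, the image $D:=c_Q(\Xi)$ cannot be a point, so it is a $G$-invariant irreducible curve; and because the $f_1$-preimage of $\Gamma_1$ is exactly $E_1$, the hypothesis $Z\not\subset E_1$ forces $D\neq\Gamma_1$ (the excluded case $D=\Gamma_1$ is Lemma~\ref{lemma:II-EC}). I pass to the small $\DQ$-factorialization $X_2$: as $\pi_2\colon X_2\to X$ is crepant, $-K_{X_2}=\pi_2^*(-K_X)$ is nef and big and $\beta(\Xi)$ is unchanged, so Corollary~\ref{CorollaryA16} may be applied to $f_2\colon X_2\to\mathbb{P}^2$ with $H_2=f_2^*\mathcal{O}_{\mathbb{P}^2}(1)$. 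Write $Z'=c_{X_2}(\Xi)$.

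First I would dispose of the cases where the image $f_2(Z')$ is small. If $H_2\cdot Z'\ge 3$, then by Lemma~\ref{lemma144} and Corollary~\ref{CorollaryA16} we get $\beta(\Xi)>0$, exactly as in Lemma~\ref{lemma:II-EC}. Otherwise $f_2(Z')$ has degree at most $2$. If $f_2(Z')$ is a point, then $Z'$ lies in a conic fibre of $f_2$, so $D$ is a quadrisecant conic with $H_2\cdot Z'=0$ and is contained in every section $f_2^{-1}(\ell)\in|H_2|$ through that point; if $f_2(Z')$ is a line $\ell$ (including the case of a degree-$2$ cover), then $Z'\subset f_2^{-1}(\ell)\in|H_2|$. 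In both situations $D$ lies on a section $S\in|H_2|$, a quartic del Pezzo surface, and since $D\neq\Gamma_1$, Lemma~\ref{II.1a} gives $\beta(\Xi)>0$ unless $D\subseteq\Gamma_1\cup L_1$, i.e.\ unless $D=L_1$ is the trisecant line.

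The remaining, and main, case is when $f_2(Z')$ is a conic and $Z'\to f_2(Z')$ is birational, so that $H_2\cdot Z'=2$ exactly and $Z'$ is forced only onto a section of $|2H_2|$, making the Nadel bound inconclusive. Here I would use the numerical identity $H_2\cdot Z'=2\deg_Q D-(\widetilde D\cdot E_1)$, corrected by the contribution of the flop along $L_1$, to constrain $D$. This splits the case into: a line $L\subset Q$ disjoint from $\Gamma_1$, whose image is automatically a conic and which is precisely the allowed exception; a conic $C\subset Q$ meeting $\Gamma_1$ in two points; and a priori higher-degree rational curves mapping birationally onto a $G$-invariant conic of $\mathbb{P}^2$. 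For a conic $C$ meeting $\Gamma_1$ twice, a dimension count on the three-dimensional system $|2H_1-\Gamma_1|$ of quadrics through $\Gamma_1$ produces a quadric through $\Gamma_1\cup C$, so $C$ lies on a section of $|H_2|$ and Lemma~\ref{II.1a} again gives $\beta(\Xi)>0$. To rule out the higher-degree curves I would classify the $G$-invariant conics of $\mathbb{P}^2$ from the explicit $G$-action (whose invariant cubic is $\delta$) and check, for each, that any $G$-invariant curve $D$ on $Q$ with that conic as image is either a line or lies on a section of $|H_2|$.

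The hard part is this conic-image analysis: one must track the flop correction relating $Z'$ on $X_2$ to $D$ on $Q$, and then exclude $G$-invariant rational curves of degree $\ge 2$ that map birationally onto a $G$-invariant conic while avoiding all sections of $|H_2|$. I would also need to treat the degenerate possibility that $Z'$ drops to a point under $\widetilde X\dashrightarrow X_2$: then $Z'$ is a $G$-fixed point of $X_2$, necessarily the node, and the estimate reduces to the trisecant line handled in Lemma~\ref{II.1c}. Every other step is a formal consequence of Corollary~\ref{CorollaryA16}, Lemma~\ref{II.1a}, and the crepant invariance of $\beta$.
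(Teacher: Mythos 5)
Your setup — Corollary~\ref{CorollaryA16} applied through the conic bundle, then reduction to Lemma~\ref{II.1a} when the image of $Z'$ in $\DP^2$ is a point or a line — is the same mechanism the paper relies on, and those two cases are fine. The genuine gap is in your main case, where $f_2(Z')$ is a conic and $Z'\to f_2(Z')$ is birational. Since $H_2=f_2^*\mathcal{O}_{\DP^2}(1)$ and $f_2$ has connected fibres, every member of $|H_2|$ on $X_2$ is $f_2^{-1}(\ell)$ for a line $\ell\subset\DP^2$; hence any curve lying on a member of $|H_2|$ has $f_2$-image a line or a point, and under the hypothesis of this case $Z'$ cannot lie on \emph{any} member of $|H_2|$. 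So the quadric through $\Gamma_1\cup C$ that your dimension count is supposed to produce cannot exist, and the count itself is wrong: the space of sections of $|H_2|=|2H_1-\Gamma_1|$ is $3$-dimensional as a vector space (the system maps to $\DP^2$ and has the basis $S_1,S_2,S_3$), while containing a bisecant conic $C$ forces the vanishing of the restriction map to $H^0\big(C,\mathcal{O}_C(2H_1)(-C\cap\Gamma_1)\big)\simeq H^0\big(\DP^1,\mathcal{O}_{\DP^1}(2)\big)$, which is also $3$-dimensional, so the expected space of such quadrics is zero. The same inconsistency undermines your plan for higher-degree curves: you propose to check that each $G$-invariant $D$ with conic image ``is either a line or lies on a section of $|H_2|$,'' but the second alternative is incompatible with having conic image, so the check could only succeed by proving that no such $D$ exists at all — and that is precisely what is not supplied. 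Note such curves are not excluded numerically: a bisecant conic ($H_2\cdot Z'=4-2=2$) or a twisted cubic meeting $\Gamma_1$ in four points ($H_2\cdot Z'=6-4=2$) passes your Nadel filter.

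The paper closes exactly this case by a different and shorter route: by Lemma~\ref{lemma:primedivisors-II}, the only $G$-invariant surfaces that can contain $Z$ are $F$ and the $G$-invariant member of $|H_2|$, and curves on those were already treated (Lemmas~\ref{II.1a} and~\ref{II.1c}); for $Z$ not contained in them, it applies the Nadel bound to get $H_2\cdot Z\leq 2$ and then converts this directly into $H_1\cdot Z\leq 1$ using that $L_1$ lies in the base locus of $|H_2|$ (equivalently, working with the relation $H_2\sim 2H_1-E_1-F$ on $\widehat X$), rather than attempting to place $Z$ on a section of $|H_2|$. Whatever one thinks of the brevity of that last step, your write-up does not reach it: the conic-image case, which is the actual content of Lemma~\ref{lemma:curves-L-II}, remains open.
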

\begin{proof}
 By Lemma \ref{lemma:primedivisors-II}, a $G$-invariant surface containing $Z$ is either $F$ or the $G$-invariant element of $|H_2|$. We have seen that for such $Z$, $\beta(\Xi)>0$. 
 If $Z\not \subset H_2$, as in the proof of Lemma~\ref{lemma:II-EC}, there is a surjective morphism $\widetilde X\to \DP^2$ and $H_2\cdot Z\leq 2$. Since $L_1$ is in the base locus of $H_2$, this implies that $H_1\cdot Z\leq 1$.
\end{proof}

\begin{maintheorem2*}
$X$ is $K$-polystable.
\end{maintheorem2*}

\begin{proof}
Assume that $X$ is not $K$-polystable, and denote by $\Xi$ a~$G$-invariant prime divisor over $X$ with $\beta(\Xi)\leq 0$. If $c_{X}(\Xi)$ is $0$-dimensional, it is $\{x_0\}$, and $c_{Q}(\Xi)= L_1$, so that $\beta(\Xi)>0$ by Lemma~\ref{II.1c}. If $c_Q(\Xi)$ is a curve and lies on a section $S$ of $|H_2|$, then $\beta(\Xi)>0$ by Lemma~\ref{II.1a}. If $c_{Q}(\Xi)$ is a line, then $\beta(\Xi)>0$ by Lemma~\ref{II.1b} and Lemma~\ref{II.1c}. If $c_{\widetilde X}(\Xi)$ is a curve lying on $E_1$, then $\beta(\Xi)>0$ by Lemma~\ref{lemma:II-EC}, and if $c_{\widetilde X}(\Xi)$ is a curve not lying on $E_1$ and such that $c_{Q}(\Xi)$ is not a line, then $\beta(\Xi)>0$ by Lemma~\ref{lemma:curves-L-II}. This exhausts the cases where $c_{X}(\Xi)$ is $1$-dimensional. 
Assume now that $c_X(\Xi)$ is a prime divisor. Then, by Lemma~\ref{lemma:primedivisors-II}, $\beta(\Xi)>0$ unless $c_X(\Xi)\in |H_2|$. We have seen that $\beta(S)>0$ for $S\in |H_2|$ in the proof of Lemma~\ref{II.1a}, and this concludes the proof. 
\end{proof}
As in the case of Family I, since $\Aut (X)$ is finite, $X$ is K-stable and this implies by openness of K-stability \cite{BL22}: 
\begin{corollary}
  A general one-nodal prime Fano threefold of genus $12$ in Family II is K-stable.   
\end{corollary}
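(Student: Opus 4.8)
The plan is to upgrade the K-polystability of the single explicit member $X$ established in Main Theorem (II) to K-stability, and then propagate K-stability to a general member of Family II by openness, exactly in the spirit of the corresponding corollary for Family I.

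First I would record the two ingredients already in hand. Main Theorem (II) shows that the particular member $X$ carrying the $G=\DZ/2\DZ\rtimes\DZ/3\DZ$-action is K-polystable, and the Claim proved earlier gives that $\Aut(X)$ is finite. I then invoke the standard fact that a K-polystable Fano variety with finite automorphism group is automatically K-stable. The only way K-stability can fail for a K-polystable variety is through a non-trivial product test configuration of Futaki invariant zero, and such a configuration is induced by a non-trivial one-parameter subgroup of $\Aut(X)$. Since $\Aut(X)$ is finite it contains no such subgroup, so $X$ is K-stable.

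Next I would apply openness of K-stability in families. The threefold $X$ has terminal Gorenstein, hence klt, singularities, so it occurs as a fiber of a $\DQ$-Gorenstein flat family of Fano threefolds over the (irreducible, $5$-dimensional) base parametrising Family II, and the locus of K-stable fibers of such a family is Zariski-open. Because the explicit $\Gamma_1$ was chosen to satisfy the defining genericity conditions of Family II --- it lies on no hyperplane section of $Q$ and admits a unique trisecant line $L_1$ --- the corresponding $X$ lies in the interior of Family II, so the open K-stable locus is non-empty. It therefore contains the generic point, and the general member of Family II is K-stable.

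The only point that genuinely needs checking is that the explicit $X$ is a non-degenerate member of Family II, so that the non-empty open K-stable locus indeed meets (hence contains) the generic point; this is immediate from the construction. The substantive external input, rather than any obstacle specific to this family, is the openness theorem for K-stability of $\DQ$-Gorenstein families of klt Fano varieties, which is precisely what legitimises the passage from the single member $X$ to the general member.
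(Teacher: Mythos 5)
Your proposal is correct and follows essentially the same route as the paper: the paper likewise upgrades the K-polystability of the explicit member from Main Theorem (II) to K-stability using the finiteness of $\Aut(X)$, and then invokes openness of K-stability to conclude for the general member of Family II. The only difference is that you spell out the standard justifications (no one-parameter subgroups in a finite group, hence no non-trivial product test configurations; openness for $\mathbb{Q}$-Gorenstein families of klt Fanos) which the paper leaves implicit.
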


\section{Family III}
Let $X$ be a one-nodal prime Fano threefold of genus $12$ that belongs to Family III of Theorem~\ref{fams} is the midpoint of a Sarkisov link associated to a rational map $V_5\dashrightarrow \DP^1$; we describe the associated birational geometry briefly, see \cite{JRP11, Pro16} and \cite{KP23} for precise statements. 

\begin{figure}[h!]
    \centering
    \begin{tikzcd}&& \widehat X \arrow[dl,  "\sigma_1"'] \arrow[dr,  "\sigma_2"] \arrow[dd,  "\pi"]  &&\\
    & X_1  \arrow[dl,  "f_1"'] \arrow[dr,  "\pi_1"]  && X_2 \arrow[dl,  "\pi_2"'] \arrow[dr,  "f_2"]\\
    V_5 && X && \DP^1
    \end{tikzcd}
\end{figure}

Denote by $H_1= \sigma_1^*\big(f_1^*\mathcal{O}_{V_5}(1)\big)$ and $H_2= \sigma_2^*\big(f_2^* \mathcal O_{\DP^1}(1)\big)$, and by $H= \pi^*(-K_X)$ the pullbacks to $\widehat X$ (or to any of the models) of the ample generators of $\Pic (V_5)$, $\Pic (\DP^1)$ and $\Pic (X)$ repsectively. 
The morphism $f_1$ is the blowup of a smooth rational quartic curve $\Gamma_1\subset V_5\subset \DP^6$, and there is a unique bisecant line $L_1$ to $\Gamma_1$. 
The linear system $|H_1-\Gamma_1|$ has dimension $2$,  $\Bs |H_1-\Gamma_1|= \Gamma_1\cup L$, and the rational map associated to $|H_1-\Gamma_1| = |H_2|$ is precisely $V_5\dashrightarrow \DP^1$ induced by the Sarkisov link above. The threefold $X_1$ is weak Fano, and 
\begin{equation*}\label{eq2.1}
    -K_{X_1}\sim H= 2H_1-E_1
\end{equation*}
where $E_1= \Exc f_1$, so that the proper transform of $L_1$ (still denoted $L_1$) is the unique flopping curve on $X_1$. The map $\pi_1$ contracts $L_1$ to a node  $\{x_0\}= \operatorname{Sing}(X)\in X$. Let $\pi\colon \widehat X\to X$ be the blowup of $x_0$, and $\sigma_1$ the induced map to $X_1$; $\chi$ is the Atiyah flop associated to $x_0\in X$ and $L_1= \sigma_1(F)$, where $F= \Exc \pi$.
Then, $\widehat X$ is a weak Fano threefold of $\rho=3$ and we have \cite{KP23}:
\begin{equation*}\label{eq2.2}
 -K_{\widehat X}= H-F \sim H_1+H_2   
\end{equation*}
and from $H\sim 2H_1-E_1 \sim H_1+H_2$, we deduce
\begin{equation*}\label{eq2.3}
    H_2\sim H_1-E_1.
\end{equation*}
The map $f_2$ is a del Pezzo fibration (a Mori fibre space with two-dimensional fibres) of degree $H^2\cdot H_2= 6$. For later reference, the intersection numbers on $\widehat X$ are: 
\begin{align*}
     &H_1^3=5,& &H_1^2\cdot E_1 = 0, & &H_1\cdot E_1^2=-4, & &E_1^3=-6,\\
    &      & &H_1^2\cdot F = 0, & &H_1\cdot F^2=-1, & &F^3=2,\\
    &      & &E_1\cdot F\cdot H_1=0,& &E_1\cdot F^2=-2, & &E_1^2\cdot F=0.
\end{align*}

\subsection{Construction of a member with $\mathbb{G}_m\rtimes \DZ_2$-action} Recall from \cite[Section 5.8]{Fano21} that the quintic threefold $V_5\subset \DP^6$ can be defined scheme theoretically by
$$
\left\{\aligned
&x_4x_5-x_0x_2+x_1^2=0, \\
&x_4x_6-x_1x_3+x_2^2=0,\\
&x_4^2-x_0x_3+x_1x_2=0,\\
&x_1x_4-x_0x_6-x_2x_5=0,\\
&x_2x_4-x_3x_5-x_1x_6=0.\\
\endaligned
\right.
$$
and is endowed with an action of $G= \mathbb{G}_m\rtimes \DZ_2$ defined by the~involution 
$$
\tau\colon \big[x_0:x_1:x_2:x_3:x_4:x_5:x_6\big]\mapsto\big[x_3:x_2:x_1:x_0:x_4:x_6:x_5\big],
$$
and by the~automorphisms $\lambda_s$ 
$$
\big[x_0:x_1:x_2:x_3:x_4:x_5:x_6\big]\mapsto\big[s^3x_0:s^5x_1:s^7x_2:s^9x_3:s^6x_4:s^4x_5:s^8x_6\big].
$$
Consider the curve $\Gamma_1 \subset V_5$ defined by the embedding $\DP^1\hookrightarrow\DP^4$ given by
$$[x:y]\to [0: ix^3y: ixy^3:0:-x^2y^2 :-x^4:-y^4],$$
where $i^2= -1$, then $\Gamma_1$ is a $G$-invariant rational curve of degree $4$. The line $L_1= \{ x_0=x_1=x_2=x_3=x_4=0\}$ is the unique bisecant line to $\Gamma_1$ and it is also $G$-invariant. 
Note that $\Gamma_1$ lies on $\{x_0=x_3=0\}\cap V_5$, and the pencil of hyperplanes containing $\Gamma_1$ is the restriction of \[ \mathcal H=\big\{H_{[\lambda:\mu]}=\{\lambda x_0+\mu x_3=0\}; [\lambda: \mu] \in \DP^1\big\}\] to $V_5$. Denote by $S_{[\lambda: \mu]}= H_{[\lambda: \mu]}\cap V_5$, and note that for any hyperplane in the pencil, $L_1\cup \Gamma_1\subset S_{[\lambda: \mu]}$.
The midpoint $X$ of the Sarkisov link above is endowed with a $G$-action. Finally, denote by $S= \{ x_4=0\} \cap V_5$ the only $G$-invariant hyperplane section of $V_5$, and observe that $S$ has multiplicity $2$ abong $L$, so that $\widetilde S_{[\lambda: \mu]}= H_1-E_1-F$ and $\widetilde S= H_1-2F$ are the proper transforms of $S_{[\lambda: \mu]}$ and $S$ on $\widetilde X$.

\begin{claim}
    The group $\Aut(X)=G$, and in particular, it is reductive.
\end{claim}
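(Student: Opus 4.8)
The plan is to identify $\Aut(X)$ with the subgroup $\Aut(V_5,\Gamma_1)\subseteq\Aut(V_5)$ of automorphisms of the quintic del Pezzo threefold preserving the quartic $\Gamma_1$, and then to pin this subgroup down using the classical isomorphism $\Aut(V_5)\cong\PGL_2(\DC)$. First I would argue that the whole Sarkisov link is canonically attached to $X$. The node $x_0=\Sing(X)$ is the unique singular point, so any $g\in\Aut(X)$ fixes it and lifts to the blowup $\pi\colon\widehat X\to X$, preserving $F=\Exc\pi$; it therefore acts on the $\rho=3$ weak Fano $\widehat X$ and permutes the two small contractions $\sigma_1,\sigma_2$. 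Because $f_1$ contracts a divisor to a curve on a threefold while $f_2$ is a del Pezzo fibration over $\DP^1$, the two rational contractions $f_1\circ\sigma_1$ and $f_2\circ\sigma_2$ are of different types and cannot be interchanged; hence $g$ descends through $\sigma_1$ and $f_1$ to an automorphism of $V_5$ carrying $\Gamma_1=f_1(E_1)$ to itself. Conversely every element of $\Aut(V_5,\Gamma_1)$ is transported equivariantly along the link to an automorphism of $X$, and the two assignments are mutually inverse, so $\Aut(X)\cong\Aut(V_5,\Gamma_1)$.

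Next I would use $\Aut(V_5)\cong\PGL_2(\DC)$, so that $H:=\Aut(V_5,\Gamma_1)$ is a closed subgroup of $\PGL_2$ containing $G$. Passing to the normalisation $\DP^1\to\Gamma_1$ and reading off the induced action from the given parametrisation, one checks that $\lambda_s$ acts as $[x:y]\mapsto[x:sy]$ and $\tau$ as $[x:y]\mapsto[y:x]$; thus $\{\lambda_s\}$ generates a maximal torus $T\cong\Gm$ of $\PGL_2$ fixing the two points $\Gamma_1\cap L_1=\{[1:0],[0:1]\}$, while $\tau$ normalises $T$ and acts by inversion ($\tau\lambda_s\tau^{-1}=\lambda_{s^{-1}}$). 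The weights $3,5,7,9,6,4,8$ show the $G$-action on $V_5$ is faithful, so $G$ maps isomorphically onto the normaliser $N(T)=T\rtimes\DZ/2\DZ\cong\DC^*\rtimes\DZ/2\DZ$. It then remains to show $H$ is no larger than $N(T)$, and the key step is the classification of closed overgroups of $N(T)$ in $\PGL_2$: if $H^\circ=T$ then $H$ normalises $T$, so $H\subseteq N(T)$; while if $H^\circ\supsetneq T$ then $H^\circ$ is a Borel or all of $\PGL_2$, and in the Borel case $H^\circ$ together with the Weyl element of $N(T)$ generates $\PGL_2$, so either way $H=\PGL_2$. Since $\Gamma_1$ is a quartic whereas the unique closed $\PGL_2$-orbit on $V_5$ is a rational sextic, $\Gamma_1$ is not $\PGL_2$-invariant and $H\neq\PGL_2$; hence $H=N(T)=G$, so $\Aut(X)=G$.

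I expect the main obstacle to be the first paragraph: one must be sure that $V_5$ and $\Gamma_1$ are intrinsically recovered from $X$, i.e. that the Sarkisov link is canonical and that no automorphism swaps its two sides — this is exactly what upgrades a mere inclusion into the equality $\Aut(X)=\Aut(V_5,\Gamma_1)$. The numerical data already recorded above (the distinct natures of $f_1$ and $f_2$) should settle the rigidity, after which the group theory is routine once $\Aut(V_5)\cong\PGL_2$ is invoked. Finally, reductivity is immediate: the identity component of $G\cong\DC^*\rtimes\DZ/2\DZ$ is the torus $\DC^*$, which is reductive, and a linear algebraic group whose identity component is reductive is itself reductive.
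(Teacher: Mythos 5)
Your proof is correct and follows essentially the same route as the paper: both identify $\Aut(X)\simeq\Aut(V_5,\Gamma_1)$ as a closed subgroup of $\Aut(V_5)=\PGL_2(\DC)$ containing $G$, reduce to the dichotomy $\Aut(X)=G$ or $\Aut(X)=\PGL_2(\DC)$, and exclude the latter because $\Gamma_1$ is not $\Aut(V_5)$-invariant. The only difference is one of self-containment: you prove the dichotomy directly (classifying closed overgroups of the torus normalizer in $\PGL_2$) where the paper cites \cite{NvdP}, and you spell out the link-rigidity and $V_5$-orbit-structure details that the paper asserts without proof.
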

\begin{proof}
Since 
\[ G\simeq \Gm \rtimes \DZ_2\subset \Aut(X)\simeq \Aut(V_5; \Gamma_1)\subset \Aut(V_5)= \PGL_2(\DC), \]
by \cite{NvdP}, $\Aut(X)= G$ or $\Aut(X)= \Aut(V_5)= \PGL_2(\DC)$. The second case is impossible because $\Gamma_1$ is not $\Aut(V_5)$-invariant. 
\end{proof}
We will apply Theorem~\ref{equivGstab} to prove that $X$ is K-polystable. To do so, we first describe possible centres of $G$-invariant divisors over $X$. In what follows, $\Xi$ denotes a $G$-invariant prime divisor over $X$. 

\begin{claim}\label{cla3.1}
If the centre of $\Xi$ on $X$ is $0$-dimensional, it is the singular point $c_X(\Xi)= \{x_0\}$.     
\end{claim}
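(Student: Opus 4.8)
The plan is to prove that $x_0$ is the \emph{only} $G$-fixed point of $X$; the claim then follows immediately, since the centre $c_X(\Xi)$ of a prime divisor is irreducible, so a zero-dimensional $G$-invariant centre is a single $G$-fixed point. That $x_0$ itself is $G$-fixed is clear: it is the unique singular point of $X$, hence preserved by $\Aut(X)\supseteq G$, and alternatively, since $\pi_1$ is $G$-equivariant and $L_1$ is $G$-invariant, $x_0=\pi_1(L_1)$ is $G$-fixed.

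First I would reduce the problem to $V_5$, mirroring the arguments of Claims~\ref{cla1} and~\ref{cla2.1}. Since $\pi_1$ is an isomorphism away from the flopping curve $L_1$, which it contracts to $x_0$, any $G$-fixed point of $X$ other than $x_0$ lifts to a $G$-fixed point of $X_1$ lying off $L_1$. The morphism $f_1\colon X_1\to V_5$ is the $G$-equivariant blowup of $\Gamma_1$ with exceptional divisor $E_1$. A $G$-fixed point of $X_1$ not on $E_1$ maps to a $G$-fixed point of $V_5\setminus\Gamma_1$, while a $G$-fixed point on $E_1$ maps to a $G$-fixed point of $\Gamma_1$, the fibration $E_1\to\Gamma_1$ being $G$-equivariant. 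Hence it suffices to check that neither $V_5$ nor $\Gamma_1$ carries a $G$-fixed point.

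For $V_5$ I would exploit that $\lambda_s$ acts on $x_0,\dots,x_6$ with pairwise distinct weights $3,5,7,9,6,4,8$, so the $\Gm$-fixed points of $\DP^6$ are exactly the seven coordinate points. Substituting into the five defining equations of $V_5$ shows that precisely the four coordinate points $e_0=[1:0:0:0:0:0:0]$, $e_3$, $e_5$ and $e_6$ lie on $V_5$. The involution $\tau$ sends $e_0\mapsto e_3$ and $e_5\mapsto e_6$, so it permutes these four $\Gm$-fixed points without fixing any of them, and $V_5$ has no $G$-fixed point. The same phenomenon occurs on $\Gamma_1\simeq\DP^1$: the induced $\Gm$-action (which one reads off from the parametrization as $[x:y]\mapsto[x:sy]$) fixes only the two points $\Gamma_1\cap L_1$, and $\tau$ acts by $[x:y]\mapsto[y:x]$ and interchanges them, so $\Gamma_1$ carries no $G$-fixed point and neither does $E_1$.

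Combining these, $X_1$ has no $G$-fixed point at all, so $X\setminus\{x_0\}$ has none, which proves the claim. The only real content is the fixed-point computation on $V_5$ together with the $\tau$-action on the four fixed points; the point to watch is that two of them, $e_5$ and $e_6$, lie on $\Gamma_1\cap L_1$ and are absorbed into the blowup and the flop, so one must confirm that passing to $X_1$ does not hide a $G$-fixed point over the node or along $L_1$. This is exactly what the $\tau$-freeness of the $\Gm$-fixed loci on $\Gamma_1$ and on $L_1$ guarantees, so no genuine obstacle arises.
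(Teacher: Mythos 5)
Your proof is correct and follows the same route as the paper, whose entire argument is the single assertion that $V_5$ has no $G$-fixed point; you supply the verification of that assertion (distinct $\Gm$-weights force fixed points to be coordinate points, exactly $e_0,e_3,e_5,e_6$ lie on $V_5$, and $\tau$ permutes them freely) together with the reduction from $X$ to $V_5$, both of which the paper leaves implicit. No gaps.
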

    
\begin{proof}
There is no point of $V_5\subset \DP^6$ fixed by the action of $G$.
\end{proof}

We now consider those $G$-invariant prime divisors over $X$ which have one-dimensional centre $Z=c_{V_5}(\Xi)$ on $V_5$. By \cite[Corollary 5.39]{Fano21}, the $G$-invariant curves on $V_5$ are precisely the line $L_1$, the conic $C$ defined parametrically by $[x:y] \mapsto [x^2:0:0:y^2:xy:0:0]$, the twisted cubic defined parametrically by $[x:y] \mapsto [x^3:x^2y:xy^2:y^3:0:0:0]$
and a family of sextic curves $C_\gamma$ for $\gamma\in \DC^*$ in each of the hyperplane sections $\{x_4=0\}\cap V_5$ and $\{\lambda x_0+ \mu x_3=0\}\cap V_5$.

\begin{lemma}
    Let $\Xi$ be a $G$-invariant prime divisor with centre $Z=c_{V_5}(\Xi)$ a curve. Then $Z= L_1$, $Z=\Gamma_1$ or $\beta(\Xi)>0$.
\end{lemma}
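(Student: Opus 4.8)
The plan is to apply Theorem~\ref{equivGstab} by ruling out every $G$-invariant prime divisor $\Xi$ over $X$ with $\beta(\Xi)\le 0$ whose centre $Z=c_{V_5}(\Xi)$ is a curve different from $L_1$ and $\Gamma_1$. By the classification of $G$-invariant curves on $V_5$ recalled above, such a $Z$ is either the conic $C$, the twisted cubic $T$, or one of the sextics $C_\gamma$, so it suffices to treat these three cases. First I would record intersection numbers against the fibre class $H_2=H_1-E_1$ of the del Pezzo fibration $f_2$. Since $C$ and $T$ are disjoint from both $\Gamma_1$ and $L_1$ (so their transforms avoid $E_1$ and the node), one gets $H_2\cdot C=2$ and $H_2\cdot T=3$, whereas each sextic lying on a pencil member $S_{[\lambda:\mu]}$ sits inside a fibre of $f_2$ and hence has $H_2\cdot C_\gamma=0$.

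For the conic and the twisted cubic I would argue as in Lemmas~\ref{lemma:I-EC} and~\ref{lemma:II-EC}. If $\beta(\Xi)\le 0$, then by Lemma~\ref{lemma144} the centre $Z$ lies in the one-dimensional part of $\Nklt(X,\lambda D)$ for some $G$-invariant $D\sim_{\DQ}-K_X$ with $\lambda<\frac{3}{4}$. Passing to the smooth model carrying the morphism $\eta$ to $\DP^1$ induced by $f_2$, the transform $\widetilde Z$ of $Z$ meets a general fibre $T\sim H_2$ of $\eta$ in $H_2\cdot Z\ge 2$ points; since Lemma~\ref{CorollaryA15} bounds the intersection of a one-dimensional non-klt component with a general fibre by $1$, this is a contradiction, so $\beta(\Xi)>0$ when $Z\in\{C,T\}$.

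The substantive case is $Z=C_\gamma$, where the fibration bound is vacuous. Here I would use the flag $C_\gamma\subset \widetilde S_{[\lambda:\mu]}\subset\widehat X$ (and the flag $C_\gamma\subset\widetilde S\subset\widehat X$, with $\widetilde S\sim H_1-2F$, for the sextics on $\{x_4=0\}$) together with Theorem~\ref{corollary:Kento-formula-Fano-threefold-surface-curve}. Taking $Y=\widetilde S_{[\lambda:\mu]}\sim H_1-E_1-F$, the steps are: (i) compute the Zariski decomposition of $\pi^*(-K_X)-uY$ from the tabulated intersection numbers to obtain $P(u)$, $N(u)$, and check $S_X(Y)<1$; (ii) identify $Y$ with a degree-six del Pezzo surface (a general fibre of $f_2$), write down $\NEb(Y)$, and express the class of $C_\gamma$ in its generators; (iii) compute the Zariski decomposition of $P(u)\vert_Y-vC_\gamma$ chamber by chamber in $(u,v)$ and integrate to obtain $S\big(W^Y_{\bullet,\bullet};C_\gamma\big)<1$. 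Since $A_X(Y)=1$ for these non-exceptional surfaces, $\min\{1/S_X(Y),1/S(W^Y_{\bullet,\bullet};C_\gamma)\}>1$ then yields $\beta(\Xi)>0$.

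The main obstacle I expect is step (iii): one must pin down the class of $C_\gamma$ on the del Pezzo fibre and then control the entire chamber structure of the negative part $N(u,v)$ over $0\le u\le\tau$, doing so uniformly across the one-parameter family $\{C_\gamma\}_{\gamma\in\DC^*}$ (in particular tracking which boundary curves enter $N(u,v)$ as $v$ grows). A secondary point is to make the fibration argument of the second paragraph rigorous despite $X$ being singular; as in Families~I and~II I would transfer the non-klt statement to a smooth model (the blow-up $\widehat X$, or $X_2$) that genuinely maps to $\DP^1$, where Lemma~\ref{CorollaryA15} applies.
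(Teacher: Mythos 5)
Your handling of the conic and the twisted cubic is exactly the paper's argument: Lemma~\ref{lemma144} puts the centre $Z_2=c_{X_2}(\Xi)$ inside $\Nklt(X_2,B)$ for some $B\sim_{\DQ}-\lambda K_{X_2}$ with $\lambda<\tfrac34$, and Lemma~\ref{CorollaryA15} then forces $H_2\cdot Z_2\le 1$, contradicting $H_2\cdot Z_2=2$ or $3$ for curves disjoint from $\Gamma_1\cup L_1$.

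The sextic case, however, contains a genuine gap, and it is the heart of your proposal. Your premise that a $G$-invariant sextic ``sits inside a fibre of $f_2$ and hence has $H_2\cdot C_\gamma=0$'' is false. The fibres of $f_2$ are the proper transforms of the pencil members $S_{[\lambda:\mu]}=\{\lambda x_0+\mu x_3=0\}\cap V_5$, and no $G$-invariant irreducible curve can lie in such a fibre: the automorphisms $\lambda_s$ rescale $x_0$ and $x_3$ with different weights ($s^3$ versus $s^9$), so $\Gm$ moves every member of the pencil with $\lambda\mu\neq 0$, while $\tau$ swaps the two $\Gm$-invariant members $\{x_0=0\}$ and $\{x_3=0\}$; equivalently, $G$ acts on the base $\DP^1$ of $f_2$ with no fixed point. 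A $G$-invariant curve contained in one pencil member is contained in all of its $G$-translates, hence in the base locus $\{x_0=x_3=0\}\cap V_5=\Gamma_1\cup L_1$, i.e.\ it is $\Gamma_1$ or $L_1$ and is excluded by hypothesis. (The classification statement quoted before the lemma is loosely phrased on this point, which is presumably what misled you.) The $G$-invariant sextics that actually occur avoid $L_1$ and meet $\Gamma_1$ in at most two points --- note that $\{x_4=0\}\cap V_5$, which carries one family of them, is \emph{not} a fibre of $f_2$ --- so their transforms satisfy $H_2\cdot Z_2=(H_1-E_1)\cdot Z_1\ge 6-2>1$, and the very same fibration bound you used for the conic and the cubic disposes of them. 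That is precisely the paper's proof: one uniform application of Lemma~\ref{lemma144} and Lemma~\ref{CorollaryA15} kills the conic, the cubic and the sextics at once. Your fallback via Theorem~\ref{corollary:Kento-formula-Fano-threefold-surface-curve} is therefore unnecessary; moreover, as presented it is not a proof, since the curve classes, the chamber structure of $N(u,v)$, the integrals, and the normality of the transform of the scroll $\{x_4=0\}\cap V_5$ (which is singular along $L_1$) are all deferred, and for the curves the flag $C_\gamma\subset\widetilde S_{[\lambda:\mu]}$ was designed for, the flag does not exist because those curves do not exist.
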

\begin{proof}
Assume to the contrary that $\beta(\Xi)<0$, then by Lemma~\ref{lemma144}, $Z_2= c_{X_2}(\Xi)$ is contained in $\Nklt(X_2, B_{X_2})$ for some $B_{X_2}\sim_{\DQ}-\lambda K_{X_2}$ and rational number $\lambda <\frac{3}{4}$. By Lemma~\ref{CorollaryA15}, the degree $H_2\cdot Z_2\leq 1$, and we exclude the curves with $H_1\cdot Z>1$ by considering $Z_1= c_{\Xi}(Z)$ and its intersections with $\Gamma_1$ and $L_1$. 
If $Z$ is a rational sextic curve constained in $\{x_4=0\}$ or in $\{\lambda x_)+\mu x_3=0\}$, $\Gamma_1\cap L_1= \emptyset$, so $(H_1-E_1)\cdot Z_1= H_2\cdot Z_2$ and $\Gamma_1\cap Z$ consists of at most 2 points, so $H_2\cdot Z_2>1$. Similarly, if $Z=C$ is the $G$-invariant conic or twisted cubic , $C\cap \Gamma_1= C\cap L_1= \emptyset$ and $H_2\cdot Z_2>1$. 
The only possibilities for $Z$ are $L_1$ and $\Gamma_1$. 
\end{proof}

\begin{lemma}
  Let $\Xi$ be a $G$-invariant prime divisor with centre $Z=c_{\widetilde X}(\Xi)$ a curve lying on $\widetilde F$, then $\beta(\Xi)>0$.  
\end{lemma}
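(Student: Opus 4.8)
The plan is to mirror the arguments of Lemmas~\ref{dim1b} and~\ref{II.1c}, which settle the analogous statement for the node-exceptional divisor in Families I and II. First I would make $\widetilde X$ explicit: let $\omega\colon\widetilde X\to\widehat X$ be the blowup of the proper transforms of the lines through $x_0\in X$. Since in Family~III the flopping line $L_1$ is a \emph{bisecant} of $\Gamma_1$, there are precisely two such lines and their union is $G$-invariant; write $\Lambda=\Lambda_1+\Lambda_2$ for the exceptional divisor. Because $F\simeq\DP^1\times\DP^1$ is the exceptional quadric of the node blowup $\pi$, its proper transform $\widetilde F$ is the blowup of $F$ at the two points $\widetilde F\cap\Lambda$, hence a del Pezzo surface of degree $6$; blowing up an ordinary double point has discrepancy $1$, so $A_X(\widetilde F)=2$.

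Next I would record the intersection numbers on $\widetilde X$ (continuing the pattern of the earlier families, $\Lambda^3=4$, $\Lambda^2\cdot\omega^*F=-2$, $\Lambda^2\cdot\omega^*E_1=2$, $\omega^*F^3=2$, $\omega^*E_1^3=-6$) and compute the Zariski decomposition $\omega^*\pi^*(-K_X)-u\widetilde F=P(u)+N(u)$, where $N(u)=0$ for small $u$ and then successively acquires the summand $(u-1)\Lambda$ and a multiple of the proper transform $\widetilde S=H_1-2F$ of the $G$-invariant hyperplane section $\{x_4=0\}\cap V_5$ once $u$ crosses the relevant walls. Integrating the volume up to the pseudoeffective threshold yields $S_X(\widetilde F)$, which I expect to be strictly below $2=A_X(\widetilde F)$; this already gives $\beta(\widetilde F)>0$ and $2/S_X(\widetilde F)>1$.

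For a curve $Z=c_{\widetilde X}(\Xi)$ lying on $\widetilde F$ I would use the flag $Z\subset\widetilde F\subset\widetilde X$ and Theorem~\ref{corollary:Kento-formula-Fano-threefold-surface-curve} in the form
\[
\frac{A_X(\Xi)}{S_X(\Xi)}\geq\min\Bigg\{\frac{2}{S_X(\widetilde F)},\ \frac{1}{S\big(W^{\widetilde F}_{\bullet,\bullet};Z\big)}\Bigg\},
\]
so that it suffices to prove $S\big(W^{\widetilde F}_{\bullet,\bullet};Z\big)<1$. Expanding $Z$ in $\NEb(\widetilde F)$, which is generated by the six $(-1)$-curves (the two exceptional curves $e_1,e_2$ and the four proper transforms of the rulings of $F$ through the blown-up points), and invoking convexity of the volume, I would reduce to the case $Z=\mathbf l$ for each extremal generator. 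Since $Z$ is $G$-invariant and there is no $G$-fixed point on $V_5$, $Z$ cannot be one of the $\omega$-exceptional classes, which forces a nonzero coefficient on one of the ruling classes; for each such $\mathbf l$ I would determine the Zariski decomposition of $(\omega^*\pi^*(-K_X)-u\widetilde F)\vert_{\widetilde F}-v\mathbf l$ and evaluate the resulting double integral.

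The main obstacle is the single class $Z=\widetilde S\vert_{\widetilde F}$ arising from the negative part $N(u)$: there $\ord_Z\big(N(u)\vert_{\widetilde F}\big)\neq 0$, so the first, non-integral term of $S\big(W^{\widetilde F}_{\bullet,\bullet};Z\big)$ contributes and must be added to the volume integral, exactly as in the $\widetilde R$ case of Lemma~\ref{II.1c}. Tracking the piecewise-linear negative part $N(u,v)$ across its successive walls on the del Pezzo surface $\widetilde F$, and keeping the sum of both contributions below $1$, is where the bookkeeping is delicate; the remaining ruling classes should give strictly smaller, routine contributions.
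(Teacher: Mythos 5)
Your proposal is correct and follows essentially the same route as the paper: the same blowup $\omega\colon\widetilde X\to\widehat X$ of the two flopping lines with $G$-invariant exceptional divisor $\Lambda=\Lambda_1+\Lambda_2$, the same Zariski decomposition of $\omega^*\pi^*(-K_X)-u\widetilde F$ acquiring $(u-1)\Lambda$ and then $(u-2)\widetilde S$, the computation of $S_X(\widetilde F)<2=A_X(\widetilde F)$, and the flag $Z\subset\widetilde F\subset\widetilde X$ with the convexity reduction to the extremal $(-1)$-classes $\ell_{i(j)}$ of the degree-$6$ del Pezzo surface $\widetilde F$, treating $Z=\widetilde S\vert_{\widetilde F}$ separately because of the nonzero $\ord_Z\big(N(u)\vert_{\widetilde F}\big)$ term. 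The paper's proof carries out the integrals you defer (obtaining $S_X(\widetilde F)=\tfrac{39}{22}$, $\tfrac{17}{22}$ for the ruling classes, and $\tfrac{4}{11}+\tfrac{3}{22}\int\!\!\int=\tfrac{25}{44}$ for $\widetilde S\vert_{\widetilde F}$), but the structure is identical.
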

\begin{proof}
  Consider the $G$-invariant blowup $\omega:\widetilde{X}\to \widehat X$ of the two flopping lines (these are the transforms of the lines through the singular point on $X$), and denote by $\Lambda= \Lambda_1+ \Lambda_2$ its exceptional divisor $G$. Let $\widetilde F= \omega^* F$, $\widetilde H_1= \omega^* H_1$ and $\widetilde E_1= \omega^* E_1- \Lambda$ be the proper transforms of $F$, $H_1$ and $E_1$. We also have $\widetilde S= \omega^* S- 2 \Lambda$ and $\widetilde S_{[\lambda: \mu]}= \omega^* S_{\lambda: \mu}-\Lambda$. 

If we write the Zariski decomposition of $\omega^*\pi^*(-K_X)-u\widetilde{F}= P(u)+ N(u)$, then $P(u)$ is nef for all $0\leq u \leq 3$ and \[ 
N(u)=
    \begin{cases}
   0&\mbox{ for }0 \le u\le 1,\\
    (u-1)\Lambda&\mbox{ for }2\le u\le 3,\\
    (u-1)\Lambda+(u-2)\widetilde{S}&\mbox{ for }2\le u\le 3.
    \end{cases}
\]
We now compute 
\begin{align} \label{eq:EL-II}
   S_X(\widetilde F)&=\frac{1}{(-K_X)^3}\int_{0}^{\tau}\mathrm{vol}(\omega^*\pi^*(-K_X)-u\widetilde F)du=\\
   &=\frac{1}{22}\Bigg(\int_{0}^{1} 22-2u^3 du + \int_{1}^{2} -6u^2 + 6u + 20 du  + \int_{2}^{3}   2(6 - u)(u - 3)^2 du  \Bigg)=\frac{39}{22}.
\end{align}
So that $\beta(\widetilde F)= A_X(\widetilde F)-  S_X(\widetilde F)=2- \frac{39}{22}= \frac{5}{22}>0$.

We now assume that $Z= c_{\widetilde X}(\Xi)\subset \widetilde F$. The surface $\widetilde F$ is the blowup of $F\simeq \DP^1\times \DP^1$ at two distinct points, that is a del Pezzo surface of degree $6$. Let $\ell_1$ (resp.~$\ell_2$) be the full transform of the ruling of class $(1,0)$ (resp.~$(0,1)$) on $\DP^1\times \DP^1$, and let $e_1, e_2$ be the two exceptional curves. The Mori cone $\NEb(\widetilde F)$ is generated by $e_1$ and $e_2$, and by the $(-1)$-curves $\ell_{i(j)}= \ell_j-e_i$.  

We have:
\begin{multline*}
S\big(W^{\widetilde{F}}_{\bullet,\bullet};Z\big)=\frac{3}{(-K_X)^3}\int_0^{3}\big(P(u)^{2}\cdot \widetilde{F}\big)\cdot\ord_{Z}\Big(N(u)\big\vert_{\widetilde{F}}\Big)du+\\+\frac{3}{(-K_X)^3}\int_0^{3}\int_0^\infty \mathrm{vol}\big(P(u)\big\vert_{\widetilde{F}}-vZ\big)dvdu.
\end{multline*}
 \par As there is no $G$-fixed point on $V_5$, $c_{V_5}(\Xi)$ is not a point and $Z\not \in \{e_1,e_2\}$, so that $Z\not\subset \Lambda\vert_{\widetilde F}$. When in addition, $Z\ne \widetilde{S}\vert_{\widetilde F}$,   $\ord_{Z}\big(N(u)\big\vert_{\widetilde F}\big)=0 $. Write
$$Z\sim \alpha_1e_1 +\alpha_2e_2+ \sum_{\substack{i,j\in \{1,2\}}} \alpha_{ij}\ell_{i(j)},$$ and observe that at least one of the coefficients $\alpha_{ij}\neq 0$. By convexity of volume, if the nonzero coefficient corresponds to the curve $\mathbf{l}$, we get:
$$
S\big(W_{\bullet,\bullet}^{\widetilde F};Z\big)=\frac{3}{22}\int_{0}^{3}\int_{0}^{\infty}\mathrm{vol}\Big(P(u)\big\vert_{\widetilde F}-vZ\Big)dvdu\leqslant \frac{3}{22}\int_{0}^{3}\int_{0}^{\infty}\mathrm{vol}\Big(P(u)\big\vert_{\widetilde F}-v\mathbf{l}\Big)dvdu.
$$
so it is enough to show that the last integral is less than $1$ to deduce a contradiction.
\par {\bf Case 1.} Assume that $Z\neq \widetilde S\vert_{\widetilde F}$, and let $Z\sim \ell_{i(j)}$, for $i,j\in \{1,2\}$. To fix notation, we consider $\ell_{1(2)}$. Denote by $P(u,v)$ and $N(u,v)$ the positive and negative parts of the Zariski decomposition of  $(\omega^*\pi^*(-K_X)-u\widetilde F)|_{\widetilde{F}}-vZ$. \begin{itemize}
    \item For $0\leq u \leq 1$, $N(u,v)= ve_{1}$ for $0\le v\le u$.
    \item For $1\leq u \leq 2$,
    \[ N(u,v)=\begin{cases} 0 &\mbox{ for } 0\le v\le u-1,\\
    (v-u+1)e_{1} &\mbox{ for } u-1\le v\le 1,\\
    (v-u+1)e_{1}+(v-1)\ell_{2(1)} &\mbox{ for } 1\le v\le u.\\
\end{cases}\]
 \item For $2\leq u \leq 3$,
\[ N(u,v)=\begin{cases} 0 &\mbox{ for } 0\le v\le 3-u,\\
    (v-3+u)e_{1} &\mbox{ for } 3-u\le v\le 1,\\
    (v-3+u)e_{1}+(v-1)\ell_{2(1)} &\mbox{ for } 1\le v\le 4- u.\\
\end{cases}\]

\end{itemize}
We have
{\allowdisplaybreaks\begin{align*}
    S\big(&W_{\bullet,\bullet}^{\widetilde{E}_L};Z\big)\leqslant \frac{3}{22}\int_{0}^{3}\int_{0}^{\infty}\mathrm{vol}\Big(P(u)\big\vert_{\widetilde{E}_L}-v\ell_{1(2)}\Big)dvdu=\\
    &=\frac{3}{22}\Bigg(\int_0^{1}\int_0^u 2u(u - v)  dvdu + \\
    &+\int_1^{2}\Big(\int_0^{u-1}   -v^2 + 4u - 2v - 2 dv  + \int_{u-1}^1  u^2 - 2uv + 2u - 1   dv + \int_{1}^u  (u - v + 2)(u - v) dv\Big)du+\\
    &+\int_2^{3}\Big(\int_0^{3-u}   2u^2 + 2uv - v^2 - 16u - 6v + 30 dv + \int_{3-u}^{1} (-3 + u)(3u + 4v - 13) dv + \\
    &+\int_{1}^{4-u}  (u + v - 4)(3u + v - 10) dv\Big)du\Bigg)=\frac{17}{22}<1,
\end{align*}}
which is what we wanted. 
\par {\bf Case 2.} Now assume that  $Z=\widetilde{\mathcal{S}}\vert_{\widetilde F}$ so that $\ord_{Z}\Big(N(u)\big\vert_{\widetilde F}\Big)=1$ on $u\in[2,3]$, and 
\[ \frac{3}{(-K_X)^3}\int_2^{3}\big(P(u)^{2}\cdot \widetilde{F}\big)\cdot\ord_{Z}\Big(N(u)\big\vert_{\widetilde{F}}\Big)du=\frac{4}{11}.\]As before, denote by $P(u,v)$ and $N(u,v)$ the positive and negative part of the Zariski decomposition of $(\omega^*\pi^*(-K_X)-u\widetilde F)\vert_{\widetilde F} -vZ$, so that 
\begin{itemize}
    \item For $0\le u\le 1$, $N(u,v)=v(e_1 + e_2)$ for $0\leq v\leq u/3$.
    \item For $1\le u \le 2$, 
    \[ N(u,v)= \begin{cases}
        0  &\mbox{ for }0 \leq v\leq \frac{u-1}{2},\\
         (v -u + 1)(e_1 + e_2 ) &\mbox{ for } \frac{u-1}{2}\le v\le \frac{u}{3}.\\
    \end{cases}\]
    \item For $2\le u\le 3$, 
    \[N(u,v)= 
      \begin{cases}
         0  &\mbox{ for } 0 \le v\le \frac{3-u}{2},\\
         (v - 3+u)(e_1+e_2)  &\mbox{ for } \frac{3-u}{2}\le v\le 2-\frac{2u}{3}. \end{cases}\]
\end{itemize}We now compute 
 {\allowdisplaybreaks
  \begin{align*}
     S\big(&W_{\bullet,\bullet}^{\widetilde F};Z\big)=\frac{4}{11}+ \frac{3}{22}\int_{0}^{3}\int_{0}^{\infty}\mathrm{vol}\Big(P(u)\big\vert_{\widetilde F} -vZ\Big)dvdu= \\
    &=\frac{4}{11}+\frac{3}{22}\Bigg(\int_0^{1}\int_0^{u/3} 2(u - 2v)(u - 3v) dvdu + \\
    &+\int_1^{2}\Big(\int_0^{(u-1)/2}  -2uv + 4v^2 + 4u - 8v - 2  dv + \int_{(u-1)/2}^{u/3}   2(u - 2v)(u - 3v) dv\Big)du +\\
     &+\int_2^{3}\Big(\int_0^{(3-u)/2}  2u^2 + 6uv + 4v^2 - 16u - 24v + 30 dv + \int_{(3-u)/2}^{2-2u/3} 2(u + 2v - 4)(2u + 3v - 6) dv \Big)du\Bigg)=\\
    &=\frac{25}{44}<1.
 \end{align*}}
 and this finishes the proof since 
 \begin{equation*}
\frac{A_X(\Xi)}{S_X(\Xi)}\geqslant\min\Bigg\{\frac{2}{S_X(\widetilde{F})},\frac{1}{S\big(W^{\widetilde{F}}_{\bullet,\bullet};Z\big)}\Bigg\}>1.
\end{equation*}
\end{proof}

\begin{lemma}
  Let $\Xi$ be a $G$-invariant prime divisor with centre $Z=c_{X_1}(\Xi)$ a curve lying on $E_1$, then $\beta(\Xi)>0$.  
\end{lemma}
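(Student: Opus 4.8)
The plan is to follow the strategy of Lemmas~\ref{lemma:I-EC} and~\ref{lemma:II-EC}, with one structural change. For $\DP^3$ and $Q\subset\DP^4$ the projection away from $L_1$ produces a fibration over $\DP^1$, resp.\ $\DP^2$; for $V_5\subset\DP^6$ projection from the bisecant $L_1$ only lands in $\DP^4$ and gives no fibration over a low-dimensional base. Instead I would use the del Pezzo fibration $f_2\colon X_2\to\DP^1$ already present in the Sarkisov link, exactly as in the earlier lemma on one-dimensional centres $c_{V_5}(\Xi)$. Arguing by contradiction, suppose $\beta(\Xi)<0$; then by Lemma~\ref{lemma144} and Lemma~\ref{CorollaryA15} applied to $f_2$, the transform $Z_2=c_{X_2}(\Xi)$ satisfies $H_2\cdot Z_2\leq 1$, and it suffices to contradict this by proving $H_2\cdot Z_2\geq 2$.

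First I would pin down the centre. Since $G$ has no fixed point on $V_5$, Claim~\ref{cla3.1} excludes a point, so $c_{V_5}(\Xi)=\Gamma_1$ and the transform $\widetilde Z$ of $Z$ (computed on $\widehat X$, or equivalently on $X_2$ since $H_2$ is a pullback) is an irreducible $G$-invariant curve on $E_1$ dominating $\Gamma_1$. Being an irreducible $\Gm$-invariant curve, $\widetilde Z$ is the closure of a one-dimensional $\Gm$-orbit, hence $\widetilde Z\cong\DP^1$ with two $\Gm$-fixed points; comparing the $\Gm$-weights on $\widetilde Z$ and on $\Gamma_1$ forces the dominant equivariant map $\widetilde Z\to\Gamma_1$ to have degree $1$, so $\widetilde Z$ is a section of the ruled surface $E_1\to\Gamma_1$.

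The core is then the computation of $H_2\cdot Z_2=\deg(f_2|_{\widetilde Z}\colon\widetilde Z\to\DP^1)$. Here I would use that $f_2$ is induced by the $G$-invariant pencil $\mathcal H=\{\lambda x_0+\mu x_3=0\}$, so its base is $\DP^1_{[\lambda:\mu]}$, on which $\lambda_s$ acts with weights $(-3,-9)$, i.e.\ with weight difference $6$, whereas on $\Gamma_1\cong\widetilde Z$ the group $\Gm$ acts with weight difference $1$ (a direct check on the given parametrisation of $\Gamma_1$). The map $f_2|_{\widetilde Z}$ is $\Gm$-equivariant and non-constant: a constant image would be a $\Gm$-fixed point of the base, but $\tau$ interchanges the two fixed points and hence the two corresponding fibres, which would force $\widetilde Z$ into two disjoint fibres. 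A dominant equivariant map $\DP^1\to\DP^1$ between $\Gm$-lines is monomial in the eigen-coordinates, so the weight differences are multiplied by the degree; comparing $6$ and $1$ gives $\deg(f_2|_{\widetilde Z})=6$. Thus $H_2\cdot Z_2=6>1$, contradicting Lemma~\ref{CorollaryA15}, and so $\beta(\Xi)>0$.

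I expect the genuine obstacle to be precisely the bound $H_2\cdot Z_2\geq 2$, which is not a consequence of intersection theory on $E_1$ alone: writing $\widetilde Z\equiv\xi+kf$ on the Hirzebruch surface $E_1=\DP(\mathcal N^\vee_{\Gamma_1/V_5})$ one computes $H_2\cdot\widetilde Z=k-2$, which is as small as $0$ for a general section. The positivity is forced only by $G$-equivariance, the decisive input being the \emph{absence of a $G$-fixed point on the base $\DP^1$ of $f_2$} (equivalently, that $\Gm$ acts there with the large weight difference $6$), which is exactly what prevents $\widetilde Z$ from being a low-degree section. A minor point to record, as in the earlier lemma, is that Lemma~\ref{CorollaryA15} is being applied to the weak Fano $X_2$ together with its del Pezzo fibration.
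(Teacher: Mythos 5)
Your proof is correct, and its skeleton is the same as the paper's: argue by contradiction, combine Lemma~\ref{lemma144} with the Nklt bound for a fibration over $\DP^1$ applied to the del Pezzo fibration $f_2\colon X_2\to\DP^1$ to get $H_2\cdot Z_2\leq 1$, then contradict this with a lower bound on $H_2\cdot Z_2$. (Two small points in your favour here: the paper cites Lemma~\ref{CorollaryA13} for this step, which is evidently a slip for Lemma~\ref{CorollaryA15}, the one you use; and you correctly flag that it is being applied to the weak Fano $X_2$.) Where you genuinely diverge is in how the lower bound is obtained. The paper disposes of it in one clause, asserting $H_2\cdot Z_1\geq H_1\cdot Z_1\geq 4$, which amounts to claiming $E_1\cdot Z_1\leq 0$; as your Hirzebruch-surface remark shows, this is not a formal consequence of intersection theory on $E_1=\DP\big(\mathcal{N}^\vee_{\Gamma_1/V_5}\big)$: a section corresponding to a sub-line-bundle $M\subset \mathcal{N}_{\Gamma_1/V_5}$ (a bundle of degree $6$) has $E_1\cdot Z_1=\deg M$ and $H_2\cdot Z_1=4-\deg M$, which is $\leq 1$ for the positive-degree eigensections of the $\Gm$-splitting. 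The inequality is rescued only by full $G$-invariance: such sections map to the two $\Gm$-fixed points of the base pencil $\{\lambda x_0+\mu x_3=0\}$, which $\tau$ interchanges, so they are not $G$-invariant. Your weight computation is exactly the missing substantiation: a $G$-invariant irreducible curve in $E_1$ dominating $\Gamma_1$ carries a weight-$1$ $\Gm$-action (forcing it to be a section), it must dominate the base $\DP^1$ since $G$ fixes no point there, and an equivariant dominant map from a weight-$1$ to a weight-$6$ $\DP^1$ is monomial of degree $6$, so $H_2\cdot Z_2=6>1$. In short, your route proves the same contradiction the paper invokes, but it supplies the equivariant argument that the paper's one-line inequality actually requires; the only cost is the explicit weight bookkeeping, which the construction of the $G$-action makes available anyway.
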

\begin{proof}
Assume to the contrary that $\beta(\Xi)<0$, then by Lemma~\ref{lemma144}, $Z_2= c_{X_2}(\Xi)$ is a one-dimensional component of $(X_2, B_{X_2})$, where $B_{X_2}\sim -\lambda K_{X_2}$ for some $\lambda < 3/4$. By Lemma~\ref{CorollaryA13}, $H_2\cdot Z_2\leq 1$. This is impossible as $Z_1= c_{X_1}(\Xi)$ cannot be mapped to a point by $\phi_1$ because there is no $G$-invariant point on $V_5$, and $H_2\cdot Z_1\geq H_1\cdot Z_1\geq 4$. 
\end{proof}
\begin{remark}
    For the sake of completion, observe that $X_1$ itself is divisorially K-polystable. Indeed, for $0\le u\le 1$
    \[ -K_{X_1}-uE_1\sim_{\DQ} H-uE_1 \sim_{\DQ} 2H_1-(1+u)E_1\]
is a mobile divisor, that is the pullback of a nef divisor on $X_2$, and for $u>1$, this divisor is not effective. We have
\begin{align*}
    (-K_{X_1})^3\cdot S_{X_1}(E_1)&= 22\cdot S_{X_1}(E_1)= \int_0^1\operatorname{vol}(2H_1-(1+u)E_1)du=\\&=\int_0^1\big((1-u)(-K_{X_2}+2uH_2\big)^3du= \int_0^1\big((1-u)H+ 2uH_2\big)^3du\\&= \int_0^1(1-u)^2(22+14u)du=\frac{17}{2}
\end{align*}
so that $\beta(E_1)>0$.

\end{remark}

\begin{lemma}
\label{lemma:primedivisors-III}
There is no $G$-invariant irreducible surface $D_X$ such that
$-K_X\sim_{\mathbb{Q}}\lambda D_X+\Delta_X$
for some positive rational number $\lambda>\frac{4}{3}$ and effective $\mathbb{Q}$-divisor $\Delta$.
\end{lemma}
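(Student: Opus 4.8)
The plan is to mirror the proofs of Lemma~\ref{I:dim2} and Lemma~\ref{lemma:primedivisors-II}. Suppose such a $G$-invariant surface $D_X$ exists. Since $\pi_1\colon X_1\to X$ is a small (hence crepant) $\DQ$-factorialization, I would pull the relation back to $X_1$ to get $H=\pi_1^*(-K_X)\sim_\DQ \lambda D_1+\Delta_1$, where $D_1$ is the proper transform of $D_X$ and $\Delta_1$ is an effective, $G$-invariant $\DQ$-divisor. The whole argument then takes place in $N^1(X_1)$, using $\Effb(X_1)=\DR_{\ge 0}[E_1]+\DR_{\ge 0}[H_2]$ together with $H_1=H_2+E_1$ and $H=2H_1-E_1$; since $E_1$ and $H_2$ are linearly independent, a class $aE_1+bH_2$ is effective only if $a\ge 0$ and $b\ge 0$.

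First I would dispose of the case $D_1=E_1$. Here $\Delta_1\sim 2H_1-(1+\lambda)E_1=2H_2+(1-\lambda)E_1$, whose $E_1$-coefficient $1-\lambda$ is negative because $\lambda>\tfrac43>1$; hence $\Delta_1$ is not effective, a contradiction.

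Next, if $D_1\ne E_1$, then $f_1(D_1)$ is a $G$-invariant irreducible surface on $V_5$, so $D_1\sim dH_1-m_1E_1$ with $d=\deg f_1(D_1)\ge 1$ and $m_1=\operatorname{mult}_{\Gamma_1}f_1(D_1)\ge 0$. Pushing forward by $f_1$ and using $(f_1)_*H=-K_{V_5}=2\mathcal O_{V_5}(1)$ and the effectivity of $(f_1)_*\Delta_1$, intersection with $\mathcal O_{V_5}(1)^2$ gives $2\ge \lambda d$, whence $d<3/2$ and $d=1$. So $f_1(D_1)$ is a $G$-invariant hyperplane section of $V_5$. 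This is precisely the step that differs from Families I and II, and I expect it to be the main obstacle: there $\DP^3$ and $Q$ carry \emph{no} $G$-invariant hyperplane, whereas on $V_5$ the distinct weights $3,5,7,9,6,4,8$ of $x_0,\dots,x_6$ force a $\Gm$-invariant hyperplane to be a single coordinate, and the involution $\tau$ (which fixes only $x_4$) singles out $S=\{x_4=0\}\cap V_5$ as the unique $G$-invariant hyperplane section. The observation that rescues the argument is that $\Gamma_1\not\subset S$, since the $x_4$-coordinate $-x^2y^2$ in the parametrization of $\Gamma_1$ is not identically zero; therefore $m_1=0$ and $D_1\sim H_1$.

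Finally, with $D_1\sim H_1$ I would compute $\Delta_1\sim(2-\lambda)H_1-E_1=(2-\lambda)H_2+(1-\lambda)E_1$, whose $E_1$-coefficient is again $1-\lambda<0$, contradicting effectiveness. This excludes both cases and proves the lemma. The only genuinely new ingredient compared with the earlier families is the identification and handling of the unique $G$-invariant hyperplane section, so I would take care to record explicitly why $\Gamma_1$ avoids it; the remaining computations are the same short cone-membership checks as before.
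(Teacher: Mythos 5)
Your proposal is correct and follows essentially the same route as the paper's proof: reduce to $X_1$, rule out $D_1=E_1$ via the effective cone $\Effb(X_1)=\DR_{\ge 0}[E_1]+\DR_{\ge 0}[H_2]$, use the degree bound $\lambda d\le 2$ to force $f_1(D_1)$ to be the unique $G$-invariant hyperplane section $\{x_4=0\}\cap V_5$, and derive the final contradiction from $\Delta_1\sim(2-\lambda)H_2+(1-\lambda)E_1$ not being effective. Your write-up is in fact slightly more careful than the paper's, since you record explicitly why $\Gamma_1\not\subset\{x_4=0\}$ (so $m_1=0$), a step the paper leaves implicit.
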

\begin{proof}
Let $D_X$ be such a surface, and denote by $D_1$, $\Delta_1$ the proper transforms of $D_X$ and $\Delta_X$ on $X_1$. We have: 
\[ H\sim_{\DQ} 2H_1+ E_1\sim_{\DQ} \lambda D_1+ \Delta_1.\]
Recall that the pseudo-effective cone $\Effb(X_1)$ is $\DR_{\ge 0}[E_1]+ \DR_{\ge 0}[H_2]$, where $H_2\sim_{\DQ} H_1-E_1$.
If $D_1= E_1$, we see that 
\[ \Delta_1\sim_{\DQ} 2H_2 + (1-\lambda) E_1\]
cannot be an effective divisor. 
We may now assume that $D_1 \in \DR_{\ge 0}[H_1]+ \DR_{\ge 0}[H_2]$, that is $D_1= xH_1-yE_1$, for $x,y\in \DN$ and $x\ge y$. Since $\lambda D_1\le -K_{X_1}$, $\lambda a\leq 2$, so that $a=1$ and $b=0$ or $b=1$. As $D_1$ is mapped to a $G$-invariant surface of $V_5$, $\phi_1(D_1)$ is the hyperplane section $\{x_4=0\}\cap V_5$, and $b=0$.  
Now, $\Delta_1\sim_{\DQ}(2-\lambda)H_1-E_1$, but this cannot be effective as $2-\lambda<1$.
\end{proof}
As in the previous two cases, we conclude: 
\begin{maintheorem3*}
$X$ is $K$-polystable.
\end{maintheorem3*}

This time $X$ is not K-stable as $\Aut(X)= \Gm\rtimes \DZ_2$, but using \cite[Corollary 1.16]{Fano21} (which still holds in the case of a nodal Fano threefold), we conclude: 
\begin{corollary}
  A general one-nodal prime Fano threefold of genus $12$ in Family III is K-polystable.   
\end{corollary}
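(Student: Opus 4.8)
The plan is to apply Zhuang's equivariant criterion, Theorem~\ref{equivGstab}: since $\Aut(X)=G=\Gm\rtimes\mumu_2$ is reductive, it suffices to show $\beta(\Xi)>0$ for every $G$-invariant prime divisor $\Xi$ over $X$. I would argue by contradiction, supposing there is a $G$-invariant prime divisor $\Xi$ with $\beta(\Xi)<0$, and then eliminate every possibility for the centre $c_X(\Xi)$ organized by its dimension, reusing the lemmas established above exactly as in the proofs of Main Theorems~(I) and~(II).

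I would first dispose of the surface case. As in the proof of Lemma~\ref{I:dim2}, the inequality $\beta(\Xi)<0$ together with Lemma~\ref{lemma144} forces $c_X(\Xi)$ to be a $G$-invariant irreducible surface $D_X$ arising as a minimal log canonical centre, so that $-K_X\sim_{\mathbb{Q}}\lambda D_X+\Delta_X$ with $\Delta_X$ effective and $\lambda>\tfrac{4}{3}$. This is exactly the configuration ruled out by Lemma~\ref{lemma:primedivisors-III}, a contradiction.

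It then remains to treat centres of dimension $\le 1$, which I would handle by pushing down to $V_5$. If $c_X(\Xi)$ is zero-dimensional, Claim~\ref{cla3.1} identifies it with the node $x_0$; passing to $\widetilde X$, the centre lies on the exceptional divisor $\widetilde F$, so the lemma controlling curves on $\widetilde F$ (together with the bound $\beta(\widetilde F)=\tfrac{5}{22}>0$ computed there) yields $\beta(\Xi)>0$. If instead $c_X(\Xi)$ is a curve, then passing through the small modification $X_1\to V_5$ its image $c_{V_5}(\Xi)$ is again a curve, since the only curves contracted by $f_1$ are non-$G$-invariant fibres of $E_1\to\Gamma_1$. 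The classification of $G$-invariant curves on $V_5$ (the line $L_1$, the conic, the twisted cubic, and the sextic families) then leaves only $L_1$ and $\Gamma_1$ as cases not already giving $\beta(\Xi)>0$. The centre $\Gamma_1$ corresponds to a centre on $E_1\subset X_1$, excluded by the $E_1$-lemma; and $L_1$ is contracted to $x_0$ by $\pi_1$, so its centre on $\widetilde X$ again lies on $\widetilde F$ and is excluded as above. Every branch contradicts $\beta(\Xi)<0$, so $X$ is $K$-polystable.

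Because all the quantitative work has been front-loaded into the preceding lemmas, the theorem itself is an assembly argument; the genuine difficulty lives upstream, in the $\widetilde F$- and $E_1$-lemmas, where one must control $S\big(W^{\widetilde F}_{\bullet,\bullet};Z\big)$ through explicit Zariski decompositions on the degree-$6$ del Pezzo surface $\widetilde F$ and on the relevant del Pezzo sections. Within the assembly, the subtlest point to verify is that the case analysis is genuinely exhaustive: one must be sure that the absence of $G$-fixed points on $V_5$ rules out any stray low-dimensional centre, and that the reduction of the curve case to $L_1$ or $\Gamma_1$ via the classification of $G$-invariant curves leaves no gap.
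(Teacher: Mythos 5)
Your argument proves the wrong statement: what you assemble is Main Theorem~(III) — that the particular member $X$ constructed in Section~5, the one with $\Aut(X)=\Gm\rtimes\mumu_2$, is K-polystable — whereas the corollary asserts K-polystability of a \emph{general} member of the $5$-dimensional Family III. Your assembly (surfaces via Lemma~\ref{lemma:primedivisors-III}, the node and $L_1$ via the $\widetilde F$-computations, curve centres via the classification of $G$-invariant curves on $V_5$ and the $E_1$-lemma) is essentially the paper's own, unwritten, proof of Main Theorem~(III), so that part is sound. What is entirely missing is the passage from this single special member to the general member of the family, and for Family III this step is precisely where the difficulty lies.

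For Families I and II that passage was cheap: there $\Aut(X)$ is finite, so the K-polystable special member is in fact K-stable, and K-stability is an open condition in families. That route is closed here: $\Aut(X)=\Gm\rtimes\mumu_2$ is infinite, so $X$ is K-polystable but \emph{not} K-stable, and K-polystability is not an open or deformation-invariant property — a strictly K-polystable Fano with positive-dimensional automorphism group is typically the polystable limit of strictly K-semistable, non-polystable members (the paper itself recalls that the smooth genus-$12$ family contains strictly K-semistable threefolds). Openness of K-semistability only gives that the general member of Family III is K-semistable; it does not rule out that the general member is strictly K-semistable with polystable degeneration $X$. The paper closes exactly this gap by invoking \cite[Corollary 1.16]{Fano21}, together with the remark that this result still holds for one-nodal Fano threefolds; this is the tool that upgrades ``there exists a K-polystable member with infinite reductive automorphism group'' to a statement about the general member. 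Without this moduli-theoretic step (or an equivalent argument), your proof establishes the K-polystability of one member but not the corollary.
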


\bibliography{biblio}
\bibliographystyle{abbrv}

\end{document}